\begin{document}

\newtheorem{theorem}{Theorem}   

\newtheorem{proposition}[theorem]{Proposition}

\newtheorem{conjecture}[theorem]{Conjecture}
\def\theconjecture{\unskip}
\newtheorem{corollary}[theorem]{Corollary}
\newtheorem{lemma}[theorem]{Lemma}
\newtheorem{sublemma}[theorem]{Sublemma}
\newtheorem{observation}[theorem]{Observation}
\newtheorem{remark}[theorem]{Remark}
\newtheorem{definition}[theorem]{Definition}

\theoremstyle{definition}

\newtheorem{notation}[theorem]{Notation}

\newtheorem{question}[theorem]{Question}
\newtheorem{questions}[theorem]{Questions}
\newtheorem{example}[theorem]{Example}
\newtheorem{problem}[theorem]{Problem}
\newtheorem{exercise}[theorem]{Exercise}

\numberwithin{theorem}{section} \numberwithin{theorem}{section}
\numberwithin{equation}{section}

\def\earrow{{\mathbf e}}
\def\rarrow{{\mathbf r}}
\def\uarrow{{\mathbf u}}
\def\varrow{{\mathbf V}}
\def\tpar{T_{\rm par}}
\def\apar{A_{\rm par}}

\def\reals{{\mathbb R}}
\def\torus{{\mathbb T}}
\def\heis{{\mathbb H}}
\def\integers{{\mathbb Z}}
\def\naturals{{\mathbb N}}
\def\complex{{\mathbb C}\/}
\def\distance{\operatorname{distance}\,}
\def\support{\operatorname{support}\,}
\def\dist{\operatorname{dist}\,}
\def\Span{\operatorname{span}\,}
\def\degree{\operatorname{degree}\,}
\def\kernel{\operatorname{kernel}\,}
\def\dim{\operatorname{dim}\,}
\def\codim{\operatorname{codim}}
\def\trace{\operatorname{trace\,}}
\def\Span{\operatorname{span}\,}
\def\dimension{\operatorname{dimension}\,}
\def\codimension{\operatorname{codimension}\,}
\def\nullspace{\scriptk}
\def\kernel{\operatorname{Ker}}
\def\ZZ{ {\mathbb Z} }
\def\p{\partial}
\def\rp{{ ^{-1} }}
\def\Re{\operatorname{Re\,} }
\def\Im{\operatorname{Im\,} }
\def\ov{\overline}
\def\eps{\varepsilon}
\def\lt{L^2}
\def\diver{\operatorname{div}}
\def\curl{\operatorname{curl}}
\def\etta{\eta}
\newcommand{\norm}[1]{ \|  #1 \|}
\def\expect{\mathbb E}
\def\bull{$\bullet$\ }
\def\C{\mathbb{C}}
\def\R{\mathbb{R}}
\def\Rn{{\mathbb{R}^n}}
\def\Sn{{{S}^{n-1}}}
\def\M{\mathbb{M}}
\def\N{\mathbb{N}}
\def\Q{{\mathbb{Q}}}
\def\Z{\mathbb{Z}}
\def\F{\mathcal{F}}
\def\L{\mathcal{L}}
\def\S{\mathcal{S}}
\def\supp{\operatorname{supp}}
\def\dist{\operatorname{dist}}
\def\essi{\operatornamewithlimits{ess\,inf}}
\def\esss{\operatornamewithlimits{ess\,sup}}
\def\xone{x_1}
\def\xtwo{x_2}
\def\xq{x_2+x_1^2}
\newcommand{\abr}[1]{ \langle  #1 \rangle}

\newcommand{\Norm}[1]{ \left\|  #1 \right\| }
\newcommand{\set}[1]{ \left\{ #1 \right\} }

\def\one{\mathbf 1}
\def\whole{\mathbf V}
\newcommand{\modulo}[2]{[#1]_{#2}}

\def\scriptf{{\mathcal F}}
\def\scriptg{{\mathcal G}}
\def\scriptm{{\mathcal M}}
\def\scriptb{{\mathcal B}}
\def\scriptc{{\mathcal C}}
\def\scriptt{{\mathcal T}}
\def\scripti{{\mathcal I}}
\def\scripte{{\mathcal E}}
\def\scriptv{{\mathcal V}}
\def\scriptw{{\mathcal W}}
\def\scriptu{{\mathcal U}}
\def\scriptS{{\mathcal S}}
\def\scripta{{\mathcal A}}
\def\scriptr{{\mathcal R}}
\def\scripto{{\mathcal O}}
\def\scripth{{\mathcal H}}
\def\scriptd{{\mathcal D}}
\def\scriptl{{\mathcal L}}
\def\scriptn{{\mathcal N}}
\def\scriptp{{\mathcal P}}
\def\scriptk{{\mathcal K}}
\def\frakv{{\mathfrak V}}

\newtheorem*{remark0}{\indent\sc Remark}

\renewcommand{\proofname}{\indent\sc Proof.}

\title[Sparse domination and weighted variation inequalities]
{Sparse dominations and weighted variation inequalities for singular integrals and commutators}

\author{Yongming Wen, Huoxiong Wu$^*$ and Qingying Xue}

\subjclass[2010]{
42B20; 42B25.
}

\keywords{variation operators, singular integrals, commutators, sparse operators, quantitative weighted bounds, H\"{o}rmander conditions.}
\thanks{$^*$Corresponding author.}
\thanks{Supported by the NNSF of China (Nos. 11771358, 11871101) and NSFC-DFG (No. 11761131002).}

\address{School of Mathematical Sciences, Xiamen University, Xiamen 361005, China} \email{wenyongmingxmu@163.com}
\address{School of Mathematical Sciences, Xiamen University, Xiamen 361005, China} \email{huoxwu@xmu.edu.cn}
\address{School of Mathematical Sciences, Beijing Normal University; Laboratory of Mathematics and Complex Systems,
	Ministry of Education, Beijing 100875, China} \email{qyxue@bnu.edu.cn}

\begin{abstract}
	
	This paper gives the pointwise sparse dominations for variation operators of singular integrals and commutators with kernels satisfying the $L^r$-H\"{o}rmander conditions. As applications, we obtain the strong type quantitative weighted bounds for such variation operators as well as the weak-type quantitative weighted bounds for the variation operators of singular integrals and the quantitative weighted weak-type endpoint estimates for variation operators of commutators, which are completely new even in the unweighted case. In addition, we also obtain the local exponential decay estimates for such variation operators.
	
\end{abstract}
\maketitle

\section{Introduction and main results}

During the past few years, a novel set of approaches that allow to dominate operators by sparse operators has blossomed. It provides us with a new way to simplify the proofs of known results or to draw new conclusions in the weights theory. The sparse operators were originally  introduced and used by Lerner \cite{Le} to simplify the proof of the $A_2$ Conjecture \cite{Hy1}. Later on, the following sparse domination for an $\omega$-Calder\'{o}n-Zygmund operator $T$ was given in \cite{CR} and \cite{LN}, independently,
\begin{equation}\label{(1.1)}
	|Tf(x)|\leq c_n\kappa_{T}\sum_{j=1}^{3^n}\mathcal{A}_{S_j}f(x),
\end{equation}
where $\mathcal{A}_{S}f(x)=\sum_{Q\in S}|Q|^{-1}\int_{Q}|f|\chi_Q(x)$ and $S$ is a sparse family of dyadic cubes from $\mathbb{R}^n$ (see \cite{LN} for the definition of $T$). Since then, there is a good number of literature using sparse domination methods to deal with other operators, see \cite{BFP,CLPR,CCPO,HL,LS}, and these are far from complete.

Now let's turn to the commutators of a linear or sublinear operator $T$.
Recall first that a locally integrable function $b$ is said in  $\rm BMO(\mathbb{R}^n)$ spaces if
$$\|b\|_{\rm BMO}:=\sup_{Q}\frac{1}{|Q|}\int_{Q}|b-b_Q|<\infty,$$
where $b_Q=|Q|^{-1}\int_{Q}b(x)dx$.
Then the commutator $[b,T]$ generated by $T$ with $b$ is defined by
$$[b,T]f(x):=b(x)T(f)(x)-T(bf)(x).$$
In 2017, Lerner, Ombrosi and Rivera-R\'{i}os \cite{LOR1} obtained an analogue of $(\ref{(1.1)})$ for commutators of $\omega$-Calder\'{o}n-Zygmund operators, in which they gave several weighted weak type bounds for $[b,T]$ and the quantitative two-weighted estimate for $[b,T]$ due to Holmes et al. \cite{HLW}. Subsequently, Lerner et al. \cite{LOR2} extended the results  to the iterated commutator and established the necessary conditions for a rather wider class of operators. For more applications of sparse operators to commutators, see \cite{AMR,CLPR,IR,Ort,PP,PR,Ri}, and references therein.

In this paper, we will focus on the variation operators of singular integrals and their commutators. These operators can be used to measure the pointwise rate of convergence for the truncated versions of singular integrals and the corresponding commutators. Moreover, they are pointwisely larger with stronger degree of nonlinearity than maximal truncated singular integrals and their commutators, respectively. It has been shown that the variation operators for martingales and several families of operators are closely connected with probability, ergodic theory and harmonic analysis. For more earlier results, we refer the readers to \cite{AJS,Bou,Jo,JR}.

Before stating our results, we first recall some definitions and backgrounds.

\begin{definition} Let $\mathcal{T}=\{T_\varepsilon\}_{\varepsilon>0}$ be a family of operators and the  $\lim_{\varepsilon\rightarrow 0}T_{\varepsilon}f(x)=Tf(x)$ exists in some  sense. The $\rho$-variation operator is defined as
	$$\mathcal{V}_\rho(\mathcal{T}f)(x):=\sup_{\varepsilon_i\downarrow0}\big(\sum_{i=1}^{\infty}|
	T_{\varepsilon_{i+1}}f(x)-T_{\varepsilon_i}f(x)|^\rho\Big)^{1/\rho},$$
	where $\rho>1$ and the supremum is taken over all sequences $\{\varepsilon_i\}$ decreasing to zero.
\end{definition}

\begin{definition}\label{def1.2}
	The truncated singular integral operators $\mathcal{T}:=\{T_\varepsilon\}_{\varepsilon>0}$ and commutators $\mathcal{T}_b:=\{T_{\varepsilon,b}\}_{\varepsilon>0}$ are given by
	\begin{equation}\label{(1.2)}
		T_\varepsilon(f)(x):=\int_{|x-y|>\varepsilon}K(x,y)f(y)dy,
	\end{equation}
	and
	\begin{equation}\label{(1.3)}
		T_{\varepsilon,b}(f)(x):=\int_{|x-y|>\varepsilon}[b(x)-b(y)]K(x,y)f(y)dy,
	\end{equation}
	where $b\in \rm BMO(\mathbb{R}^n)$ and $K(x,y)$ satisfies the size condition:
	\begin{equation}\label{(1.4)}
		|K(x,y)|\leq C/|x-y|^n
	\end{equation}
	and $L^r$-H\"{o}rmander condition (denote by $K\in \mathcal{H}_r$):
	$$\sup_{Q}\sup_{x,z\in\frac{1}{2}Q}\sum_{k=1}^{\infty}|2^kQ|
	\Big(\frac{1}{|2^kQ|}\int_{2^kQ\backslash 2^{k-1}Q}|K(x,y)-K(z,y)|^r dy\Big)^{1/r}<\infty,$$
	and
	$$\sup_{Q}\sup_{x,z\in\frac{1}{2}Q}\sum_{k=1}^{\infty}|2^kQ|
	\Big(\frac{1}{|2^kQ|}\int_{2^kQ\backslash 2^{k-1}Q}|K(y,x)-K(y,z)|^r dy\Big)^{1/r}<\infty,$$
	when $1\leq r<\infty$. For $r=\infty$, we mean that
	$$\sup_{Q}\sup_{x,z\in\frac{1}{2}Q}\sum_{k=1}^{\infty}|2^kQ|
	\esss_{y\in2^kQ\backslash 2^{k-1}Q}|K(x,y)-K(z,y)|<\infty,$$
	and
	$$\sup_{Q}\sup_{x,z\in\frac{1}{2}Q}\sum_{k=1}^{\infty}|2^kQ|
	\esss_{y\in2^kQ\backslash 2^{k-1}Q}|K(y,x)-K(y,z)|<\infty.$$
\end{definition}

If we denote the class of kernels of $\omega$-Calder\'{o}n-Zygmund operators by $\mathcal{H}_{Dini}$, then for $1<s<r<\infty$, it is easy to check that (also see \cite{IR})
$$\mathcal{H}_{Dini}\subset\mathcal{H}_\infty\subset\mathcal{H}_r\subset\mathcal{H}_s
\subset\mathcal{H}_1.$$

In 2000, Campbell et al. \cite{CJRW1} first proved that the $\rho$-variation operators for Hilbert transform are strong $(p,p)$ and weak $(1,1)$ types if $\rho>2$. Subsequently, the aforementioned authors \cite{CJRW2} extended the results to the higher dimensional cases, such as Riesz transforms and the homogeneous singular integrals with rough kernels. For further results of the cases of rough kernels, we refer the readers to \cite{DHL}. For the weighted cases, we refer to \cite{CMTT,GT,HLP,MaTX1,MaTX2}. In particularly, Hyt\"{o}nen, Lacey and P\'{e}rez \cite{HLP} gave the sharp weighted bounds for the $\rho$-variation of Calder\'{o}n-Zygmund operators satisfying a $\log$-Dini condition. See \cite{BMSW,DuLY,MasT,MiST} etc. for other recent works on variational inequalities and their applications.

On the other hand, the variation inequalities for the commutators of singular integrals have also attracted several authors attentions. In 2013, Betancor et al. \cite{BFHR} obtained the $L^p$-boundedness of variation operators for the commutators of Riesz transforms in Euclidean setting and Schr\"{o}dinger setting. In 2015, Liu and Wu \cite{LW} studied the boundedness for $\mathcal{V}_{\rho}(\mathcal{T}_b)$ on the weighted $L^p$ spaces with $p>1$ and $\rho>2$, where $b\in {\rm BMO}$ and the kernel of the $T_\varepsilon$ is a standard Calder\'{o}n-Zygmund kernel. Following this work, Zhang and Wu \cite{ZW} established the weighted strong type bounds for the variation operators of commutators with kernels satisfying certain H\"{o}rmander conditions. Recently, the variation inequalities for commutators of singular integrals with rough kernels were also established in \cite{CDHL}. However, to our knowledge, there is no any results on the weak-type endpoint estimates for the variation of commutators, moreover, on the quantitative weighted bounds.

Inspired by the above works, this paper aims to extend the quantitative weighted results for variation of $\mathcal{T}$ with kernels from $\mathcal{H}_{\rm Dini}$ into $\mathcal{H}_r$ for $1<r\le \infty$, and to establish the quantitative weighted variation inequalities for the families of commutators $\mathcal{T}_b$ and the corresponding weighted endpoint estimate, which is completely new even in the un-weighted case.

Our main ingredients are to establish the sparse dominations for variation operator of singular integrals and commutators, which are non-trivial, especially in proving the weak type estimate of a local grand maximal truncated variation operator, and due to the methods used in \cite{LOR2} do not work and to avoid employing the trick of Cauchy integral formula as in \cite{IR}, we seek for appropriate ways relied on sparse dominations to obtain the quantitative weighted bounds and weak-type endpoint estimates for variations of commutators. Now our main results can be formulated as follows.

\begin{theorem}\label{Theorem1.1}\,\
	Let $1<r\leq\infty$, $\rho>2$ and $b\in L_{loc}^{1}(\mathbb{R}^n)$. Let $\mathcal{T}=\{T_\varepsilon\}_{\varepsilon>0}$ and $\mathcal{T}_{b}=\{T_{\varepsilon,b}\}_{\varepsilon>0}$ be given by $(\ref{(1.2)})$ and $(\ref{(1.3)})$, respectively. Assume that the kernel $K(x,y)\in \mathcal{H}_r$ and satisfies $(\ref{(1.4)})$. If $\mathcal{V}_\rho(\mathcal{T})$ is bounded on $L^{q_0}(\mathbb{R}^n,dx)$ for some $1<q_0<\infty$, then for every $f\in C_{c}^{\infty}(\mathbb{R}^n)$, there exist $3^n$ sparse families $\mathcal{S}_j$ such that
	\begin{equation}\label{(1.5)}
		\mathcal{V}_{\rho}(\mathcal{T}f)(x)\leq C(n,r',q_0,\|\mathcal{V}_\rho(\mathcal{T})\|_{L^{q_0}\rightarrow L^{q_0}})
		\sum_{j=1}^{3^n}\sum_{R\in \mathcal{S}_j}\langle|f|^{r'}\rangle_{R}^{1/r'}\chi_{R}(x),
	\end{equation}
	and
	\begin{equation}\label{(1.6)}
		\begin{split}
			\mathcal{V}_{\rho}(\mathcal{T}_bf)(x)&\leq C(n,r',q_0,\|\mathcal{V}_\rho(\mathcal{T})\|_{L^{q_0}\rightarrow L^{q_0}})\\ &\quad\times\sum_{j=1}^{3^n}\sum_{R\in \mathcal{S}_j}\Big\{|b(x)-\langle b\rangle_R|\langle|f|^{r'}\rangle_{R}^{1/r'}+
			\langle|f(b-\langle b\rangle_R)|^{r'}\rangle_{R}^{1/r'}\Big\}\chi_R(x),
		\end{split}
	\end{equation}
	where $\langle |f|^{r'}\rangle_{R}=\frac{1}{|R|}\int_{R}|f(y)|^{r'}dy$.
\end{theorem}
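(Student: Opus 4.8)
The plan is to follow the now-standard pointwise sparse domination strategy, but adapted to the variational setting, which forces us to work with a \emph{grand maximal truncated variation operator}. First I would define, for a cube $Q_0$, the local analogue
$$
\mathcal{M}_{\mathcal{T},Q_0}f(x):=\sup_{Q\ni x,\,Q\subset Q_0}\,\esss_{\xi\in Q}\mathcal{V}_\rho\big(\{T_\varepsilon(f\chi_{3Q_0\setminus 3Q})(\xi)\}_\varepsilon\big),
$$
together with the attendant global version $\mathcal{M}_{\mathcal{T}}$. The key analytic input is a weak-type $(r',r')$ bound for $\mathcal{M}_{\mathcal{T}}$: one writes $\mathcal{V}_\rho(\mathcal{T}(f\chi_{3Q\setminus 3Q'}))$ at two points $\xi,x$ of a small cube $Q'$ and estimates the difference using the $L^r$-H\"ormander condition $K\in\mathcal{H}_r$ summed over the annuli $2^kQ'$, while the "diagonal" part within a bounded number of annuli is controlled by the assumed $L^{q_0}$-boundedness of $\mathcal{V}_\rho(\mathcal{T})$ together with the size condition $(\ref{(1.4)})$ and H\"older's inequality, which is exactly what produces the $L^{r'}$-average $\langle|f|^{r'}\rangle^{1/r'}$ rather than $\langle|f|\rangle$. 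This is the step I expect to be the main obstacle: the telescoping/triangle-inequality manipulations that are trivial for a single truncated operator must be carried out \emph{uniformly over all decreasing sequences $\varepsilon_i\downarrow0$}, and one must check that the exceptional set where the truncation radius "crosses" the scale of $Q'$ contributes only boundedly many annuli; the $\rho$-variation quasi-norm only satisfies a triangle-type inequality, so one has to be careful splitting $\mathcal{V}_\rho$ of a sum.

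Granting the weak-$(r',r')$ bound for $\mathcal{M}_{\mathcal{T}}$ (and its local version), the second step is the standard Calder\'on--Zygmund stopping-time recursion inside a fixed cube $Q_0$. One selects a collection of pairwise disjoint maximal subcubes $\{P_j\}$ on which either $\langle|f|^{r'}\rangle_{3P_j}^{1/r'}$ or the local grand maximal truncated variation exceeds a large constant times $\langle|f|^{r'}\rangle_{3Q_0}^{1/r'}$; the weak-type bound guarantees $\sum_j|P_j|\le\frac12|Q_0|$, giving sparseness. On $Q_0\setminus\bigcup_j P_j$ one controls $\mathcal{V}_\rho(\mathcal{T}f)$ pointwise a.e.\ by $C\langle|f|^{r'}\rangle_{3Q_0}^{1/r'}$ plus the contributions $\mathcal{V}_\rho(\mathcal{T}(f\chi_{3P_j}))$ on each $P_j$, and iterating on the children $P_j$ in place of $Q_0$ builds the sparse family $\mathcal{S}$. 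A preliminary reduction (dyadic lattices, the $3^n$-trick) reduces the global statement to a single starting cube containing $\supp f$, yielding the $3^n$ families $\mathcal{S}_j$ and $(\ref{(1.5)})$.

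For the commutator bound $(\ref{(1.6)})$ I would exploit the algebraic identity
$$
T_{\varepsilon,b}f = (b-\lambda)T_\varepsilon f - T_\varepsilon\big((b-\lambda)f\big),\qquad \lambda=\langle b\rangle_R\ \text{a constant},
$$
valid inside the variation since $\lambda$ is constant in $\varepsilon$, so that $\mathcal{V}_\rho(\mathcal{T}_bf)\le |b-\lambda|\,\mathcal{V}_\rho(\mathcal{T}f)+\mathcal{V}_\rho(\mathcal{T}((b-\lambda)f))$. Running the \emph{same} stopping-time argument, but now also stopping when $\langle|f(b-\langle b\rangle)|^{r'}\rangle^{1/r'}$ is large and using on each cube $R$ the constant $\lambda=\langle b\rangle_R$ adapted to that cube, one obtains on the good set the pointwise bound $C\{|b(x)-\langle b\rangle_R|\langle|f|^{r'}\rangle_R^{1/r'}+\langle|f(b-\langle b\rangle_R)|^{r'}\rangle_R^{1/r'}\}$, which is precisely the summand in $(\ref{(1.6)})$; the recursion over the children then sums these up over the sparse family. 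Here the hypothesis $b\in L^1_{loc}$ (rather than BMO) suffices because we never use a BMO norm in the pointwise domination — BMO only enters later when $(\ref{(1.6)})$ is \emph{applied}. The new subtlety compared with the Calder\'on--Zygmund case of \cite{LOR2} is that the methods there do not transfer directly, so one leans on the weak-type bound for the grand maximal truncated variation operator established in the first step rather than on a pointwise formula for $T$.
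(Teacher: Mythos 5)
Your proposal follows essentially the same route as the paper: the local grand maximal truncated variation operator, its weak-type $(r',r')$ bound proved via the $L^r$-H\"ormander condition on annuli plus the size condition and $L^{q_0}$-boundedness for the near-diagonal/truncation-crossing terms, the stopping-time iteration with the extra stopping condition on $\langle|f(b-\langle b\rangle)|^{r'}\rangle^{1/r'}$ and the identity $\mathcal{V}_\rho(\mathcal{T}_bf)\le|b-\lambda|\mathcal{V}_\rho(\mathcal{T}f)+\mathcal{V}_\rho(\mathcal{T}((b-\lambda)f))$, and the $3^n$ dyadic-lattice reduction. The only point you gloss over is that the endpoint $r=\infty$ (i.e.\ the weak $(1,1)$ bound for the grand maximal operator) cannot be obtained from the H\"older-type estimates alone and requires a separate Calder\'on--Zygmund decomposition of $f$, which the paper carries out in the second half of its Lemma 3.2.
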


\begin{remark}In \cite{FZ}, Franca Silva and Zorin-Kranich applied the sparse domination to explore the sharp weighted estimates for the variation operators associated with the family of $\omega$-Calderon-Zygmund operators. Our results can be regarded as a generalization of the results in \cite{FZ}, since $\mathcal{H}_{Dini}\subset\mathcal{H}_r$.
\end{remark}

Applying the conclusion $(\ref{(1.5)})$ in Theorem $\ref{Theorem1.1}$, we get the following sharp weighted estimates for the variation operators of singular integrals.

\begin{theorem}\label{Theorem1.2}\,\
	Let $1<r\leq\infty$, $\rho>2$ and $\omega$ and $\sigma^{r'}$ be a pair of weights. Let $\mathcal{T}=\{T_\varepsilon\}_{\varepsilon>0}$ be given by $(\ref{(1.2)})$. Assume that $K(x,y)\in \mathcal{H}_r$ and satisfies $(\ref{(1.4)})$. If $\mathcal{V}_\rho(\mathcal{T})$ is bounded on $L^{q_0}(\mathbb{R}^n,dx)$ for some $1<q_0<\infty$, then for any $r'<p<\infty$,
	\begin{align*}
		\|\mathcal{V}_{\rho}(\mathcal{T}\sigma f)\|_{L^p(\omega)}
		&\leq C(n,r',p,q_0,\|\mathcal{V}_\rho(\mathcal{T})\|_{L^{q_0}\rightarrow L^{q_0}})[\omega,\sigma^{r'}]_{A_{p/r'}}^{1/p}\\
		&\qquad\times\Big([\omega]_{A_\infty}^{r'/p'}+[\sigma^{r'}]_{A_\infty}^{r'/p}\Big)
		^{1/r'}\|f\|_{L^p(\sigma^{r'})},
	\end{align*}
	$$
	\|\mathcal{V}_{\rho}(\mathcal{T}\sigma f)\|_{L^{p,\infty}(\omega)}
	\leq C(n,r',p,q_0,\|\mathcal{V}_\rho(\mathcal{T})\|_{L^{q_0}\rightarrow L^{q_0}}) [\omega,\sigma^{r'}]_{A_{p/r'}}^{1/p}[\omega]_{A_\infty}^{1/p'}\|f\|_{L^p(\sigma^{r'})}.
	$$
\end{theorem}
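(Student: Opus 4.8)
The plan is to deduce Theorem~\ref{Theorem1.2} from the sparse domination~$(\ref{(1.5)})$ of Theorem~\ref{Theorem1.1} by a standard, but carefully quantified, argument for bilinear sparse forms. First I would fix one of the $3^n$ sparse families $\mathcal{S}=\mathcal{S}_j$ appearing in~$(\ref{(1.5)})$ and reduce, by the triangle inequality, to estimating the single sparse operator $f\mapsto \sum_{R\in\mathcal{S}}\langle |f|^{r'}\rangle_R^{1/r'}\chi_R$. Replacing $f$ by $\sigma f$ and testing against a nonnegative $g\in L^{p'}(\omega)$, the strong-type bound becomes a bound for the bilinear form
\[
\Lambda(f,g)=\sum_{R\in\mathcal{S}}\Big(\frac{1}{|R|}\int_R |f|^{r'}\sigma^{r'}\Big)^{1/r'}\Big(\frac{1}{|R|}\int_R g\,\omega\Big)|R|,
\]
and by duality for $L^{p,\infty}(\omega)$ the weak-type bound reduces to the same form with $g=\chi_E\omega$-type test functions, i.e.\ it suffices to bound $\Lambda(f,\chi_E)$ with an extra $[\omega]_{A_\infty}$ gain coming from restricting the sum to $R$ meeting $E$ and using a sparse Carleson-embedding argument.

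Next I would absorb the exponent $r'$ by the substitution $p=r'\tilde p$, $q=r'\tilde q$ so that $\tilde p>1$ and the pair $(\omega,\sigma^{r'})$ lies in $A_{p/r'}=A_{\tilde p}$; writing $u=\omega$, $v=\sigma^{r'}$, one has $\langle |f|^{r'}\rangle_R^{1/r'}=\langle |f|^{r'}\rangle_R^{1/r'}$ and the form is exactly of the type treated by the Lerner--P\'erez sparse-testing machinery for the fractional-free case. The key analytic inputs are: (i) the reverse H\"older / $A_\infty$ property in the quantitative form $\frac{1}{|R|}\int_R w \le C\langle w\rangle_{R,\text{Carleson}}$ with constant controlled by $[w]_{A_\infty}$, applied to $v=\sigma^{r'}$ on the $f$-side and to $u=\omega$ on the $g$-side; (ii) the sparse Carleson embedding theorem, which converts $\sum_{R\in\mathcal{S}}\langle h\rangle_R^{\theta}\langle k\rangle_R^{1-\theta}|R|$ into $\|Mh\|\,\|Mk\|$-type quantities with the $A_\infty$ constants entering linearly after the splitting; and (iii) the elementary identity $[\omega,\sigma^{r'}]_{A_{p/r'}}^{1/p}$ packaging the $A_p$-testing constant. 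Carrying the two $A_\infty$ factors through the two sides of the bilinear form produces the symmetric sum $[\omega]_{A_\infty}^{r'/p'}+[\sigma^{r'}]_{A_\infty}^{r'/p}$ raised to $1/r'$, which is exactly the stated dependence; for the weak-type estimate only the $\omega$-side $A_\infty$ factor survives, giving $[\omega]_{A_\infty}^{1/p'}$.

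The main obstacle, and the step deserving the most care, is tracking the precise powers of the $A_\infty$ constants through the $r'$-rescaling: the inner average $\langle |f|^{r'}\sigma^{r'}\rangle_R^{1/r'}$ is a \emph{power} of an average, so the Carleson-embedding/reverse-H\"older estimates must be applied to the rescaled weight and then the $1/r'$-th root taken at the end, which is why the final $A_\infty$ factor appears under an exponent $1/r'$ rather than bare. I would handle this by first proving the clean two-weight bound for the \emph{linear} sparse operator $\mathcal{A}_\mathcal{S}^{r'}h:=\sum_R\langle h\rangle_R^{1/r'}\chi_R$ acting between $L^p(v^{1-p}\,?)$-type spaces — equivalently, apply the known sharp bound for $\mathcal{A}_\mathcal{S}$ (the case $r'=1$) with $p$ and the weights replaced by $p/r'$ and $(\omega,\sigma^{r'})$, then undo the scaling. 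Summing over the $3^n$ families $\mathcal{S}_j$ only changes the constant by the dimensional factor already absorbed into $C(n,r',p,q_0,\|\mathcal{V}_\rho(\mathcal{T})\|_{L^{q_0}\to L^{q_0}})$, completing the proof.
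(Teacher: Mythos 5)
Your overall strategy is the same as the paper's: Theorem \ref{Theorem1.2} is deduced from the sparse domination $(\ref{(1.5)})$ together with the known two-weight $A_p$--$A_\infty$ bounds $(\ref{eq 2.1})$--$(\ref{eq 2.2})$ for sparse operators; the paper's proof is literally this one-line citation. However, your final reduction step contains a genuine error. You propose to handle the operator $\mathcal{A}^{r'}_{\mathcal S}h:=\sum_{R\in\mathcal S}\langle h\rangle_R^{1/r'}\chi_R$ by ``applying the known sharp bound for $\mathcal{A}_{\mathcal S}$ (the case $r'=1$) with $p$ and the weights replaced by $p/r'$ and $(\omega,\sigma^{r'})$, then undoing the scaling.'' This is not a valid equivalence: since $\bigl(\sum_R a_R^{1/r'}\chi_R\bigr)^{r'}\ge\sum_R a_R\chi_R$ pointwise for $r'\ge1$, the quantity $\|\mathcal{A}^{r'}_{\mathcal S}h\|_{L^p(\omega)}$ is \emph{not} controlled by $\|\mathcal{A}_{1,\mathcal S}h\|_{L^{p/r'}(\omega)}^{1/r'}$; the inequality goes the wrong way. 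Moreover, the exponent this would produce, namely $[\omega]_{A_\infty}^{(1-r'/p)/r'}$, does not match the claimed $[\omega]_{A_\infty}^{1/p'}$ unless $r'=1$.

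The correct identification, in the notation of Section 2, is $\sum_R\langle h\rangle_R^{1/r'}\chi_R=\bigl(\mathcal{A}_{1/r',\mathcal S}h\bigr)^{1/r'}$ with the parameter $r=1/r'<1$, so that
$\bigl\|\sum_R\langle|\sigma f|^{r'}\rangle_R^{1/r'}\chi_R\bigr\|_{L^p(\omega)}=\|\mathcal{A}_{1/r',\mathcal S}(\sigma^{r'}|f|^{r'})\|_{L^{p/r'}(\omega)}^{1/r'}$. One must then invoke $(\ref{eq 2.1})$--$(\ref{eq 2.2})$ precisely in the regime $r=1/r'$ and exponent $p/r'$: the factor $[\omega]_{A_\infty}^{(1/r-1/p)_+}$ there becomes $[\omega]_{A_\infty}^{r'-r'/p}=[\omega]_{A_\infty}^{r'/p'}$, and taking the $1/r'$ root yields exactly the stated constants $[\omega,\sigma^{r'}]_{A_{p/r'}}^{1/p}\bigl([\omega]_{A_\infty}^{r'/p'}+[\sigma^{r'}]_{A_\infty}^{r'/p}\bigr)^{1/r'}$ and, for the weak type, $[\omega]_{A_\infty}^{1/p'}$. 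So the ``symmetric sum'' you announce in your middle paragraph is correct, but the mechanism you give for producing it is not; the loss coming from the $\ell^1$ (rather than $\ell^{r'}$) summation over the sparse family is exactly what forces the larger $A_\infty$ exponent, and it is encoded in the $r<1$ case of the Hyt\"onen--Li estimate, not in the $r=1$ case.
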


\begin{corollary}\label{Corollary1.3}\,\
	Let $1<r\leq\infty$, $\rho>2$, $\mathcal{T}=\{T_\varepsilon\}_{\varepsilon>0}$ be given by $(\ref{(1.2)})$. Assume that $K(x,y)\in \mathcal{H}_r$ and satisfies $(\ref{(1.4)})$. If $\mathcal{V}_\rho(\mathcal{T})$ is bounded on $L^{q_0}(\mathbb{R}^n,dx)$ for some $1<q_0<\infty$, then for any $r'<p<\infty$, $\omega\in A_{p/r'}$,
	\begin{align*}
		\|\mathcal{V}_{\rho}(\mathcal{T}f)\|_{L^p(\omega)}
		&\leq C(n,r',p,q_0,\|\mathcal{V}_\rho(\mathcal{T})\|_{L^{q_0}\rightarrow L^{q_0}}) [\omega]_{A_{p/r'}}^{1/p}\\
		&\qquad\times\Big([\omega]_{A_\infty}^{r'/p'}+[\omega^{\frac{1}{1-p/r'}}]_{A_\infty}^{r'/p}\Big)^{1/r'}\|f\|_{L^p(\omega)},
	\end{align*}
	$$\|\mathcal{V}_{\rho}(\mathcal{T}f)\|_{L^{p,\infty}(\omega)}\leq C(n,r',p,q_0,\|\mathcal{V}_\rho(\mathcal{T})\|_{L^{q_0}\rightarrow L^{q_0}}) [\omega]_{A_{p/r'}}^{1/p}[\omega]_{A_\infty}^{1/p'}
	\|f\|_{L^p(\omega)}.$$
\end{corollary}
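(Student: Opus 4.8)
The plan is to obtain Corollary~\ref{Corollary1.3} as a direct specialization of Theorem~\ref{Theorem1.2}. Recall that in the two-weight formulation of Theorem~\ref{Theorem1.2} one works with a pair $(\omega,\sigma^{r'})$ satisfying the joint condition $[\omega,\sigma^{r'}]_{A_{p/r'}}<\infty$, where the bracket is the natural $A_{p/r'}$-type testing constant adapted to the exponent $p/r'>1$ (which is meaningful precisely because we assume $r'<p$). The idea is to make the substitution that turns the two-weight statement into a one-weight statement: given $\omega\in A_{p/r'}$, set $\sigma^{r'}:=\omega^{1-(p/r')'}=\omega^{\frac{1}{1-p/r'}}$, i.e. $\sigma^{r'}$ is the dual weight of $\omega$ with respect to the exponent $p/r'$. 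With this choice one checks that $\mathcal{V}_\rho(\mathcal{T}\sigma f)$ applied to $f$ and then measured against $\|f\|_{L^p(\sigma^{r'})}$ reproduces, after replacing $f$ by $f\sigma^{-1}=f\omega^{-1/p'}\cdots$ appropriately, exactly the one-weight quantity $\|\mathcal{V}_\rho(\mathcal{T}g)\|_{L^p(\omega)}$ bounded by $\|g\|_{L^p(\omega)}$. This is the standard Sawyer-type change of variables used throughout the quantitative weighted literature.

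The key steps are then: first, verify the identity $[\omega,\sigma^{r'}]_{A_{p/r'}}=[\omega]_{A_{p/r'}}$ when $\sigma^{r'}=\omega^{\frac{1}{1-p/r'}}$; this is just the observation that the two-weight bracket with a weight and its exponent-dual collapses to the ordinary Muckenhoupt characteristic, because $\langle\omega\rangle_Q\langle\sigma^{r'}\rangle_Q^{p/r'-1}=\langle\omega\rangle_Q\langle\omega^{1-(p/r')'}\rangle_Q^{(p/r')-1}$ is exactly the $A_{p/r'}$ average product. Second, note that $\sigma^{r'}\in A_{p/r'}$ as well (the dual weight of an $A_{p/r'}$ weight lies in $A_{(p/r')'}$, hence is itself a Muckenhoupt weight), so both $[\omega]_{A_\infty}$ and $[\sigma^{r'}]_{A_\infty}=[\omega^{\frac{1}{1-p/r'}}]_{A_\infty}$ are finite and appear in the bound exactly as written. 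Third, perform the substitution $g=\sigma f$ in the conclusion of Theorem~\ref{Theorem1.2}: since $f\in L^p(\sigma^{r'})$ translates to $g\in L^p(\omega)$ with equal norms (because $|g|^p\omega=|f|^p\sigma^p\omega$ and $\sigma^p\omega=\sigma^{p-r'}\sigma^{r'}\omega$—one checks the exponent bookkeeping so that $\sigma^p\omega=\sigma^{r'}$, using $\sigma^{r'}=\omega^{\frac1{1-p/r'}}$), we obtain precisely the strong-type and weak-type inequalities claimed in the corollary. The weak-type inequality follows from the weak-type line of Theorem~\ref{Theorem1.2} by the identical substitution, with no extra work.

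The only point requiring genuine care—rather than a true obstacle—is the exponent arithmetic in the change of variables: one must confirm that with $\sigma^{r'}=\omega^{\frac{1}{1-p/r'}}$ the weight $\sigma$ itself is $\omega^{\frac{1}{r'(1-p/r')}}=\omega^{-1/(p-r')}$, and that consequently $\sigma^p\cdot\omega=\omega^{-p/(p-r')+1}=\omega^{(-p+p-r')/(p-r')}=\omega^{-r'/(p-r')}=\omega^{\frac{r'}{r'-p}}=(\omega^{\frac{1}{1-p/r'}})^{?}$; tracking this through shows the $L^p(\sigma^{r'})$ norm of $f$ matches the $L^p(\omega)$ norm of $g=\sigma f$. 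Once this routine identity is in place, the corollary is immediate. I would present it in two or three lines: state the substitution, cite the collapse of the two-weight bracket to $[\omega]_{A_{p/r'}}$, invoke duality of Muckenhoupt weights to identify $[\sigma^{r'}]_{A_\infty}=[\omega^{\frac1{1-p/r'}}]_{A_\infty}$, and read off both displays from Theorem~\ref{Theorem1.2}.
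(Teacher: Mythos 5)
Your proposal is correct and is exactly the route the paper intends: the paper leaves Corollary \ref{Corollary1.3} as an immediate specialization of Theorem \ref{Theorem1.2} via the dual-weight substitution $\sigma^{r'}=\omega^{1-(p/r')'}=\omega^{\frac{1}{1-p/r'}}$, relying on the observation recorded in Section 2 that $[\omega,\sigma]_{A_p}=[\omega]_{A_p}$ for this choice. Your exponent bookkeeping ($\sigma=\omega^{-1/(p-r')}$, hence $\sigma^{r'-p}=\omega$ and $\|f\|_{L^p(\sigma^{r'})}=\|\sigma f\|_{L^p(\omega)}$) is accurate, so nothing is missing.
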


Using $(\ref{(1.5)})$ again and the same method as in \cite{IR}, we can obtain the following weak type estimate for variation operators.
\begin{corollary}\label{Corollary1.4}\,\
	Let $1<r\leq\infty$, $\rho>2$, $\mathcal{T}=\{T_\varepsilon\}_{\varepsilon>0}$ be given by $(\ref{(1.2)})$. Assume that $K(x,y)\in \mathcal{H}_r$ and satisfies $(\ref{(1.4)})$. If $\mathcal{V}_\rho(\mathcal{T})$ is bounded on $L^{q_0}(\mathbb{R}^n,dx)$ for some $1<q_0<\infty$, then for every weight $\omega$ and every Young function $\varphi$,
	\begin{align*}
		\omega(\{x\in \mathbb{R}^n:\mathcal{V}_\rho(\mathcal{T}f)(x)>\lambda\})
		&\leq C(n,r',q_0,\|\mathcal{V}_\rho(\mathcal{T})\|_{L^{q_0}\rightarrow L^{q_0}})\kappa_\varphi\\
		&\qquad\times\int_{R^n}\Big(\frac{|f(x)|}{\lambda}\Big)^{r'}M_\varphi\omega(x)dx,
	\end{align*}
	where
	$$\kappa_\varphi:=\int_{1}^{\infty}\frac{\varphi^{-1}(t)[\log(e+t)]^{2r'}}{t^2[\log(e+t)]^3}dt.$$
\end{corollary}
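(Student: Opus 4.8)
The plan is to deduce Corollary \ref{Corollary1.4} directly from the pointwise sparse domination \eqref{(1.5)} in Theorem \ref{Theorem1.1}, reducing matters to a weak-type estimate for the sparse operators $\mathcal{A}_{\mathcal{S},r'}f:=\sum_{R\in\mathcal{S}}\langle|f|^{r'}\rangle_R^{1/r'}\chi_R$. Once \eqref{(1.5)} is in hand, it suffices to bound, for each of the $3^n$ sparse families $\mathcal{S}_j$,
\[
\omega\Big(\Big\{x:\sum_{R\in\mathcal{S}_j}\langle|f|^{r'}\rangle_R^{1/r'}\chi_R(x)>\lambda\Big\}\Big)
\lesssim \kappa_\varphi\int_{\mathbb{R}^n}\Big(\frac{|f(x)|}{\lambda}\Big)^{r'}M_\varphi\omega(x)\,dx,
\]
and then sum over $j$ and rescale $\lambda$. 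Replacing $|f|$ by $|f|^{r'}$, write $g=|f|^{r'}$, so that we must control the level sets of $\sum_{R}\langle g\rangle_R^{1/r'}\chi_R$ against $\lambda$; since $(\sum a_R)^{r'}$ is comparable, up to the stopping-time structure of a sparse family, to something manageable, the crux is a Carleson-embedding / ``$A_\infty$-free'' weak-type bound for sparse forms with the exponent $1/r'<1$ appearing on the averages.

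The key steps, in order, are as follows. First, invoke \eqref{(1.5)} to pass from $\mathcal{V}_\rho(\mathcal{T}f)$ to the sparse sums; fix one family $\mathcal{S}=\mathcal{S}_j$. Second, run the standard Calder\'on–Zygmund-type decomposition adapted to the sparse family at height $\lambda$: let $\{Q_i\}$ be the maximal cubes in $\mathcal{S}$ with $\langle |f|^{r'}\rangle_{Q_i}^{1/r'}>c\lambda$ (for a dimensional $c$), so that on the complement of $\bigcup_i Q_i$ the ``large'' cubes contribute nothing and one is left with a geometric tail. Third, on $\bigcup_i Q_i$ use that $\omega(Q_i)\le |Q_i|\,\langle M_\varphi\omega\rangle$-type control is \emph{not} quite available, so instead one estimates $\omega(Q_i)$ by $\int_{Q_i}M_\varphi\omega$ only after pulling out, via the generalized Hölder inequality $\langle fh\rangle_Q\le 2\|f\|_{\bar\varphi,Q}\|h\|_{\varphi,Q}$, the Orlicz average $\|f\chi_{Q_i}\|$ against $\|\omega\|_{\varphi,Q_i}\le M_\varphi\omega$; this is exactly where the Young function $\varphi$ and its associate enter, and where the weird constant $\kappa_\varphi$ with its $[\log(e+t)]^{2r'}$ factor gets produced. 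Fourth, handle the tail $\sum_{R:\,R\not\subset\cup Q_i}\langle|f|^{r'}\rangle_R^{1/r'}\chi_R$ by a summation-by-levels argument: group the remaining $R$ by the dyadic value of $\langle |f|^{r'}\rangle_R^{1/r'}/\lambda\in(2^{-k-1},2^{-k}]$, use sparseness to bound $\sum$ of $|R|$ at level $k$ by a geometric series, and sum the $k$-series — the $[\log(e+t)]$ powers in the integrand of $\kappa_\varphi$ are precisely tuned so that this $k$-sum converges and reproduces $\kappa_\varphi$ after changing the discrete sum into the stated integral. Fifth, add the $3^n$ pieces and absorb constants, noting the bound ``the same method as in \cite{IR}'' signals that these manipulations are identical to the commutator-free endpoint estimate there, only with $r'$ in place of $1$ on the averages.

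I expect the main obstacle to be the careful bookkeeping in the third and fourth steps — matching the combinatorial sparse sum to the Orlicz-maximal function $M_\varphi$ with the \emph{sharp} dependence encoded in $\kappa_\varphi$. Concretely, one has to choose the right auxiliary Young function to split $f$ and $\omega$: too crude a choice loses the $\kappa_\varphi$ sharpness, and one must verify the generalized Hölder inequality in the form $\frac1{|Q|}\int_Q |f|\,\omega\le C\,\|f\|_{L^{r'}(\log L)^{?},Q}\,M_\varphi\omega$ with the logarithmic power bookkept so that, after raising to the $1/r'$ power and summing the sparse tail, the exponent $2r'$ on $[\log(e+t)]$ in the numerator of $\kappa_\varphi$ emerges. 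Everything else — the reduction via \eqref{(1.5)}, the maximal-cube selection, the geometric tail summation, and the final union bound over $j$ — is routine and follows the template of \cite{IR,LOR2}; the only genuinely delicate point is tracking this logarithmic loss, and it is where I would spend the bulk of the write-up.
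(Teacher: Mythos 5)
Your plan is exactly the paper's route: the paper gives no argument beyond ``use \eqref{(1.5)} and the same method as in \cite{IR}'', i.e.\ reduce to the weak-type endpoint bound for the sparse operators $\sum_{R}\langle|f|^{r'}\rangle_R^{1/r'}\chi_R$ via maximal-cube selection, the generalized H\"older inequality pairing $\omega$ with $\varphi$ to produce $M_\varphi\omega$, and the level-set/overlap decomposition of the sparse family that generates the logarithmic factors in $\kappa_\varphi$. One minor caution: at each level $k$ the geometric decay you need is in the overlap depth of the cubes of $\mathcal{S}_k$ (the sets where $\sum_{R\in\mathcal{S}_k}\chi_R\ge m$ shrink geometrically in $m$ by sparseness), not directly in $\sum_{R}|R|$, but this is precisely the mechanism of \cite{IR} and does not change the approach.
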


\begin{remark}
	We remark that the first conclusions in Theorem $\ref{Theorem1.2}$ and Corollary $\ref{Corollary1.3}$ improve the main results in \cite{HLP} by removing the weak $(1,1)$ type assumption and weakening the condition of kernel, and the second conclusions are new. Therefore, Theorem $\ref{Theorem1.2}$ and Corollary $\ref{Corollary1.3}$ can be regarded as an extension and generalization of the main results in \cite{HLP}. In Corollary $\ref{Corollary1.4}$, take $r'=1$, $\varphi(t)=t$ and $\omega\in A_1$, since $\mathcal{H}_{Dini}\subset\mathcal{H}_\infty$, our argument covers the result in \cite{DuLY}, moreover, the conclusion in Corollary $\ref{Corollary1.4}$ is new itself.
\end{remark}

Moreover, applying the conclusion $(\ref{(1.6)})$ in Theorem $\ref{Theorem1.1}$, we can obtain the following quantitative weighted bounds for the variation operators of commutators of singular integrals, which are also completely new.

\begin{theorem}\label{Theorem1.4}\,\
	Let $1<r\leq\infty$, $\rho>2$, $b\in {\rm BMO}(\mathbb{R}^n)$. Assume that $K(x,y)\in \mathcal{H}_r$ and satisfies $(\ref{(1.4)})$. Let $\mathcal{T}$ and $\mathcal{T}_{b}$ be given by $(\ref{(1.2)})$ and $(\ref{(1.3)})$, respectively. If $\mathcal{V}_\rho(\mathcal{T})$ is bounded on $L^{q_0}(\mathbb{R}^n,dx)$ for some $1<q_0<\infty$, then for any $r'<p<\infty$, $\omega\in A_{p/r'}$,
	\begin{equation*}
		\begin{split}
			\|\mathcal{V}_{\rho}(\mathcal{T}_bf)\|_{L^p(\omega)}
			&\leq C(n,r',p,q_0,\|\mathcal{V}_\rho(\mathcal{T})\|_{L^{q_0}\rightarrow L^{q_0}}) [\omega]_{A_\infty}^{2}\\
			&\quad\times\Big([\omega]_{A_{p/r'}}^{\frac{p+r'}{p(p-r')}}+([\omega]_{A_{p/r'}}
			[\omega^{-\frac{r'}{p-r'}}]_{A_\infty})^{1/p}\Big)\|b\|_{\rm BMO}\|f\|_{L^p(\omega)}.
		\end{split}
	\end{equation*}
\end{theorem}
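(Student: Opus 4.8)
The plan is to deduce Theorem~\ref{Theorem1.4} from the sparse domination $(\ref{(1.6)})$ by a now-standard sparse-operator weighted estimate, paralleling the treatment of commutators of Calder\'on--Zygmund operators in \cite{LOR1,LOR2,IR}. First I would fix $r'<p<\infty$ and $\omega\in A_{p/r'}$, set $\sigma=\omega^{1-(p/r')'}=\omega^{-\frac{r'}{p-r'}}$ (the dual weight associated with the exponent $p/r'$), and split the right-hand side of $(\ref{(1.6)})$ into the two sparse forms
\[
\mathcal{A}^1_{\mathcal{S},b}f(x)=\sum_{R\in\mathcal{S}}|b(x)-\langle b\rangle_R|\,\langle|f|^{r'}\rangle_R^{1/r'}\chi_R(x),
\qquad
\mathcal{A}^2_{\mathcal{S},b}f(x)=\sum_{R\in\mathcal{S}}\langle|f(b-\langle b\rangle_R)|^{r'}\rangle_R^{1/r'}\chi_R(x),
\]
so that it suffices to bound each of these on $L^p(\omega)$ with the asserted constant and then sum over the $3^n$ families. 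Because the inner averages involve $|f|^{r'}$ rather than $|f|$, the natural move is to substitute $g=|f|^{r'}$ and $u=p/r'>1$, which converts $\langle|f|^{r'}\rangle_R^{1/r'}$ into $\langle g\rangle_R^{1/r'}$ and reduces everything to $L^u(\omega)$-bounds for the classical commutator sparse forms with $A_u$ weights, exactly the objects estimated quantitatively in \cite{LOR1}; raising to the power $1/r'$ at the end produces the stated exponents.

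The key steps, in order, are: (i) record the duality/testing characterization $\|\mathcal{A}^i_{\mathcal{S},b}f\|_{L^p(\omega)}=\sup\{\int \mathcal{A}^i_{\mathcal{S},b}f\cdot h\,dx:\ \|h\|_{L^{p'}(\omega^{1-p'})}=1,\ h\ge0\}$; (ii) for $\mathcal{A}^1$, insert $|b(x)-\langle b\rangle_R|\le |b(x)-b_{2R}|+|\langle b\rangle_R-b_{2R}|$ — or simply exploit that $|b-\langle b\rangle_R|\lesssim$ a localized oscillation — and use the John--Nirenberg inequality to replace the $\|b\|_{\rm BMO}$ factors, producing an $A_\infty$-type gain each time a BMO average is traded for an $L(\log L)$ average; (iii) estimate the resulting ``generalized'' sparse forms by the Carleson-embedding / stopping-time argument of Lerner, yielding the two-weight $A_{p/r'}$ constant together with the $[\omega]_{A_\infty}$ and $[\sigma]_{A_\infty}$ factors; (iv) for $\mathcal{A}^2$, note $\langle|f(b-\langle b\rangle_R)|^{r'}\rangle_R^{1/r'}$ is dominated by an $L(\log L)_R$-type average of $|f|^{r'}$ times $\|b\|_{\rm BMO}$ after applying the generalized H\"older inequality with the pair $(\exp L, L\log L)$ on the cube $R$, which again reduces $\mathcal{A}^2$ to a sparse form controllable by the same machinery; (v) combine the $L^u(\omega)$ bounds with $u=p/r'$, take $1/r'$ powers, use $[\omega]_{A_\infty}\le[\omega]_{A_{p/r'}}$ to absorb some factors, and simplify the exponents to the form $[\omega]_{A_\infty}^2\big([\omega]_{A_{p/r'}}^{\frac{p+r'}{p(p-r')}}+([\omega]_{A_{p/r'}}[\sigma]_{A_\infty})^{1/p}\big)$. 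Finally sum the $3^n$ sparse contributions.

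The main obstacle is the bookkeeping of the exponents in step (v): one must carefully track how the two sparse pieces contribute the $[\omega]_{A_\infty}^2$ factor (one power of $A_\infty$ from each BMO-trade, or from the $L\log L$ maximal function bound in Corollary~\ref{Corollary1.4}'s circle of ideas) and how the substitution $u=p/r'$ rescales the classical commutator bound $\|[b,T]\|_{L^u(\omega)}\lesssim [\omega]_{A_u}\max\{1,([\omega]_{A_u}[\sigma]_{A_\infty})^{1/u}\cdots\}\|b\|_{\rm BMO}$ of \cite{LOR1} into the claimed shape after raising to the $1/r'$ power and re-expressing $[\omega]_{A_u}$ as $[\omega]_{A_{p/r'}}$ and $\sigma=\omega^{-r'/(p-r')}$. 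A secondary technical point is justifying the generalized H\"older estimate for $\mathcal{A}^2$ with the correct localized Orlicz norms so that the BMO norm factors out cleanly; this is routine but must be stated for the $\mathcal{H}_r$ setting where the averages carry the exponent $r'$. Everything else is a direct application of the sparse-form weighted theory already available in the literature once $(\ref{(1.6)})$ is in hand.
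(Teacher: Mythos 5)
Your decomposition into $\mathcal{A}^1_{\mathcal{S},b}$ and $\mathcal{A}^2_{\mathcal{S},b}$ and the general toolkit (duality, the $(\exp L, L\log L)$ H\"older inequality, Carleson embedding via principal cubes) match the paper's proof, but two of your steps contain genuine gaps. First, the proposed reduction ``substitute $g=|f|^{r'}$, $u=p/r'$, apply the $L^u(\omega)$ commutator sparse bounds of \cite{LOR1}, and raise to the power $1/r'$'' cannot work as stated: the forms $\mathcal{A}^i_{\mathcal{S},b}$ are $\ell^1$-sums over cubes in which each summand carries the exponent $1/r'$ \emph{inside}, so $\|\mathcal{A}^i_{\mathcal{S},b}f\|_{L^p(\omega)}$ is not the $1/r'$-th power of the $L^{p/r'}(\omega)$ norm of the classical ($r'=1$) commutator sparse form applied to $|f|^{r'}$; the power does not commute with the outer sum over $R$. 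The forms must be estimated directly, which the paper does by dualizing against $L^{p'}(\omega)$ with respect to the measure $\omega\,dx$ and running the principal-cube argument on each piece separately.

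Second, for $\mathcal{A}^2$ the $(\exp L,L\log L)$ H\"older inequality alone does not produce the exponent $[\omega]_{A_{p/r'}}^{\frac{p+r'}{p(p-r')}}$. The paper instead applies ordinary H\"older with exponents $(s,s')$ to obtain $\langle|f(b-\langle b\rangle_R)|^{r'}\rangle_R^{1/r'}\le\langle|b-\langle b\rangle_R|^{r's'}\rangle_R^{1/(r's')}\|f\|_{L^{r's},R}$, bounds the first factor by $c\,r's'\|b\|_{\rm BMO}$ via John--Nirenberg (display $(\ref{new inequality})$), and then invokes the quantitative openness statement of Lemma $\ref{new}$: there is $s>1$ with $s'\le c_{n,p}[\omega]_{A_\infty}$ and $[\omega]_{A_{p/(r's)}}\lesssim[\omega]_{A_{p/r'}}$, a consequence of the sharp reverse H\"older inequality (Lemma $\ref{lm 2.1}$). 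This is precisely what makes $M_{r's}$ bounded on $L^p(\omega)$ with constant $[\omega]_{A_{p/r'}}^{(p/(r's))'/p}\le[\omega]_{A_{p/r'}}^{\frac{p+r'}{p(p-r')}}$ and supplies one of the two $[\omega]_{A_\infty}$ factors (the other comes from the Carleson packing $\sum_{R:\pi(R)=B}\omega(R)\lesssim[\omega]_{A_\infty}\omega(B)$, not from ``absorbing'' $A_\infty$ into $A_{p/r'}$ as you suggest in step (v)). Your outline treats this exponent bookkeeping as routine, but the reverse-H\"older self-improvement is the decisive quantitative ingredient; without it the stated bound is not reached.
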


\begin{theorem}\label{Theorem1.5}\,\
	Let $1<r\leq\infty$, $\rho>2$, $b\in {\rm BMO}(\mathbb{R}^n)$. Assume that $K(x,y)\in \mathcal{H}_r$ and satisfies $(\ref{(1.4)})$. Let $\mathcal{T}$ and $\mathcal{T}_{b}$ be given by $(\ref{(1.2)})$ and $(\ref{(1.3)})$, respectively. If $\mathcal{V}_\rho(\mathcal{T})$ is bounded on $L^{q_0}(\mathbb{R}^n,dx)$ for some $1<q_0<\infty$, then for every weight $\omega$ and every $0<\varepsilon\leq1$, all $\lambda>0$,
	\begin{align*}
		\omega(\{x\in\mathbb{R}^n:\mathcal{V}_{\rho}(\mathcal{T}_bf)(x)>\lambda\})&\leq C(n,r',q_0,\|\mathcal{V}_\rho(\mathcal{T})\|_{L^{q_0}\rightarrow L^{q_0}})\frac{1}{\varepsilon}\int_{\mathbb{R}^n}
		\psi\Big(\frac{\|b\|_{\rm BMO}|f(x)|}{\lambda}\Big)\\
		&\quad\times M_{L(\log L)^{4r'-3}(\log\log L)^{1+\varepsilon}}\omega(x)dx,
	\end{align*}
	and
	\begin{align*}
		\omega(\{x\in\mathbb{R}^n:\mathcal{V}_{\rho}(\mathcal{T}_bf)(x)>\lambda\})&\leq C(n,r',q_0,\|\mathcal{V}_\rho(\mathcal{T})\|_{L^{q_0}\rightarrow L^{q_0}})\frac{1}{\varepsilon}\int_{\mathbb{R}^n}
		\psi\Big(\frac{\|b\|_{\rm BMO}|f(x)|}{\lambda}\Big)\\
		&\quad\times M_{L(\log L)^{4r'-3+\epsilon}}\omega(x)dx,
	\end{align*}
	where $\psi(t)=t^{r'}[\log(e+t)]^{r'}$. Specially, if $\omega\in A_{\infty}$, then
	\begin{align*}
		\omega(\{x\in\mathbb{R}^n:\mathcal{V}_{\rho}(\mathcal{T}_bf)(x)>\lambda\})&\leq C(n,r',q_0,\|\mathcal{V}_\rho(\mathcal{T})\|_{L^{q_0}\rightarrow L^{q_0}})[\omega]_{A_\infty}^{4r'-3}\\
		&\quad\times[\log(e+[\omega]_{A_\infty})]\int_{\mathbb{R}^n}\psi
		\Big(\frac{\|b\|_{\rm BMO}|f(x)|}{\lambda}\Big)M\omega(x)dx.
	\end{align*}
	Moreover, if $\omega\in A_1$,
	\begin{align*}
		\omega(\{x\in\mathbb{R}^n:\mathcal{V}_{\rho}(\mathcal{T}_bf)(x)>\lambda\})&\leq C(n,r',q_0,\|\mathcal{V}_\rho(\mathcal{T})\|_{L^{q_0}\rightarrow L^{q_0}})[\omega]_{A_1}[\omega]_{A_\infty}^{4r'-3}\\
		&\quad\times[\log(e+[\omega]_{A_\infty})]\int_{\mathbb{R}^n}\psi\Big(\frac{\|b\|_{\rm BMO}|f(x)|}{\lambda}
		\Big)\omega(x)dx.
	\end{align*}
\end{theorem}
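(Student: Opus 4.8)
The plan is to deduce Theorem $\ref{Theorem1.5}$ from the pointwise sparse domination $(\ref{(1.6)})$ of Theorem $\ref{Theorem1.1}$; once that is available the variation structure disappears entirely and everything reduces to a weighted weak-type endpoint estimate for the two bilinear sparse forms
$$\mathcal{A}^{1}_{\mathcal{S},b}f:=\sum_{R\in\mathcal{S}}|b-\langle b\rangle_R|\,\langle|f|^{r'}\rangle_R^{1/r'}\chi_R,\qquad
\mathcal{A}^{2}_{\mathcal{S},b}f:=\sum_{R\in\mathcal{S}}\langle|f(b-\langle b\rangle_R)|^{r'}\rangle_R^{1/r'}\chi_R .$$
Indeed $(\ref{(1.6)})$ gives $\mathcal{V}_{\rho}(\mathcal{T}_bf)\le C\sum_{j=1}^{3^n}\big(\mathcal{A}^{1}_{\mathcal{S}_j,b}f+\mathcal{A}^{2}_{\mathcal{S}_j,b}f\big)$, and since the claimed inequalities are homogeneous of degree one in $b$ and of degree $-1$ in $\lambda$, we may normalize $\|b\|_{\rm BMO}=1$ and $\lambda=1$. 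It then suffices to show, for a single sparse family $\mathcal{S}$ and $i\in\{1,2\}$, that
$$\omega\big(\{x\in\mathbb{R}^n:\mathcal{A}^{i}_{\mathcal{S},b}f(x)>c_n\}\big)\le\frac{C(n,r')}{\varepsilon}\int_{\mathbb{R}^n}\psi(|f(x)|)\,M_{L(\log L)^{4r'-3}(\log\log L)^{1+\varepsilon}}\omega(x)\,dx ,$$
together with the analogue having $M_{L(\log L)^{4r'-3+\varepsilon}}$ in place of $M_{L(\log L)^{4r'-3}(\log\log L)^{1+\varepsilon}}$; the $A_\infty$ and $A_1$ conclusions will then follow by inserting known quantitative bounds for Orlicz maximal functions.

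The first step linearizes the dependence on $b$. Using the generalized H\"older inequality in Orlicz spaces together with the local John--Nirenberg estimate $\|b-\langle b\rangle_R\|_{\exp L,R}\lesssim\|b\|_{\rm BMO}$, one obtains $\langle|f(b-\langle b\rangle_R)|^{r'}\rangle_R^{1/r'}\lesssim\|b\|_{\rm BMO}\,\|f\|_{\psi,R}$, where $\psi(t)=t^{r'}[\log(e+t)]^{r'}$ and $\|f\|_{\psi,R}$ denotes the corresponding local Luxemburg average, so that $\mathcal{A}^{2}_{\mathcal{S},b}f\lesssim\sum_{R\in\mathcal{S}}\|f\|_{\psi,R}\chi_R$. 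For $\mathcal{A}^{1}_{\mathcal{S},b}$ one organizes $\mathcal{S}$ into trees, controls $|b(x)-\langle b\rangle_R|$ for $x\in R$ by telescoping along the chain of sparse cubes joining $R$ to the top of its tree (each step costing $O(\|b\|_{\rm BMO})$), and absorbs the resulting gain into an Orlicz maximal operator; at the end both forms are dominated by finitely many compositions of the $L^{r'}$-sparse operator $\mathcal{A}_{\mathcal{S},r'}g:=\sum_{R\in\mathcal{S}}\langle|g|^{r'}\rangle_R^{1/r'}\chi_R$ with Orlicz maximal operators $M_{L(\log L)^{s}}$. Tracking how the exponents $s$ accumulate through this reduction is what produces the total $4r'-3$, which is a combination of the weak-endpoint cost of $\mathcal{A}_{\mathcal{S},r'}$ itself (cf.\ the condition $\kappa_\varphi<\infty$ in Corollary $\ref{Corollary1.4}$, whose integrand carries the factor $[\log(e+t)]^{2r'}$) and the cost of the two commutator oscillations.

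The second step is the weighted weak endpoint for these composed objects. One performs a Calder\'on--Zygmund decomposition at height $1$ of a quantity comparable to $\psi(|f|)$: the good, bounded part is controlled by the $L^{p}$-boundedness of $\mathcal{A}_{\mathcal{S},r'}$ for a fixed $r'<p<\infty$ together with the standard dual-weight/sparseness argument that transfers the weight $\omega$ on the left into $M_{\Phi_0}\omega$ on the right for a suitable Young function $\Phi_0$; the bad part, supported on the pairwise disjoint Calder\'on--Zygmund cubes, is handled term by term, combining their disjointness, the $L^{r'}$-H\"older structure, and the John--Nirenberg control of the $b$-oscillations. Assembling the two parts and summing over the $3^n$ families yields the two displayed weak inequalities, the auxiliary factor $(\log\log L)^{1+\varepsilon}$ (or $(\log L)^{\varepsilon}$) being precisely what restores the convergence of the borderline series in the Orlicz bookkeeping, at the price of the factor $\tfrac1\varepsilon$. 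Finally, when $\omega\in A_\infty$ one substitutes the quantitative pointwise bound $M_{L(\log L)^{\alpha}}\omega(x)\lesssim_{\alpha}[\omega]_{A_\infty}^{\alpha}\log(e+[\omega]_{A_\infty})\,M\omega(x)$, which also absorbs the iterated logarithm into the single factor $\log(e+[\omega]_{A_\infty})$; and when in addition $\omega\in A_1$ one further uses $M\omega\le[\omega]_{A_1}\omega$.

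I expect the real difficulty to be concentrated in the second step. The form $\mathcal{A}^{1}_{\mathcal{S},b}$ is not sublinear and its outer factor $|b(x)-\langle b\rangle_R|$ is not localized to a fixed cube, so the Calder\'on--Zygmund/stopping-time argument must be carried out on $f$ and on the level sets of $b$ at the same time; doing this while keeping every logarithmic exponent sharp — and, as emphasized in the introduction, without resorting to the Cauchy-integral/conjugation device of \cite{IR} — is the technical heart of the argument, and is exactly what forces the somewhat unusual exponent $4r'-3$ and the $\varepsilon$-corrections in the maximal operators.
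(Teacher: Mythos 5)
Your overall architecture is the same as the paper's: reduce via the sparse domination \eqref{(1.6)} to a weighted weak-type endpoint estimate for the two sparse commutator forms, then specialize to $A_\infty$ and $A_1$ by quantitative bounds on Orlicz maximal functions. The difference is that the paper does \emph{not} re-prove the endpoint estimate: it invokes Lemma \ref{lm 2.6} (essentially Theorem 2.7 of \cite{IR}, applied with $\psi_0(t)=t^{r'}$, $\psi_1(t)=t^{r'}\log(e+t)^{r'}$), so that the entire proof of Theorem \ref{Theorem1.5} consists of choosing $\varphi_0(t)=t[\log(e+t)]^{4r'-4}[\log(e+\log(e+t))]^{1+\varepsilon}$ and $\varphi_1(t)=t[\log(e+t)]^{3r'-2}[\log(e+\log(e+t))]^{1+\varepsilon}$, verifying $\kappa_{\varphi_0},\kappa_{\varphi_1}\lesssim 1/\varepsilon$, and observing that $\Phi_1\circ\varphi_0$ and $\Phi_0\circ\varphi_1$ are both dominated by $t(\log(e+t))^{4r'-3}[\log(e+\log(e+t))]^{1+\varepsilon}$. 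That computation is exactly where $4r'-3$ comes from; it is not produced by ``tracking exponents through a composition of maximal operators.'' (Your remark that the paper avoids the methods of \cite{IR} applies to the strong-type Theorem \ref{Theorem1.4}, not to this endpoint result.)

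The genuine gap is therefore in your second step. You describe, but do not carry out, the weighted weak-type estimate for $\mathcal{A}^1_{\mathcal{S},b}$ and $\mathcal{A}^2_{\mathcal{S},b}$ with the precise weight $M_{L(\log L)^{4r'-3}(\log\log L)^{1+\varepsilon}}\omega$ on the right-hand side, and you yourself flag this as ``the technical heart.'' Two specific issues: (i) a pointwise domination of the sparse forms by compositions of $\mathcal{A}_{\mathcal{S},r'}$ with operators $M_{L(\log L)^{s}}$ does not by itself yield the Fefferman--Stein-type transfer of $\omega$ into $M_{\Phi}\omega$ at the endpoint; that transfer is the content of the (omitted) stopping-time/Calder\'on--Zygmund argument, and without it the exponent $4r'-3$ is unsupported; (ii) in the $A_\infty$ step you must absorb the factor $1/\varepsilon$ as well as the iterated logarithm, which in the paper is done by taking $\varepsilon=1/\log(e+[\omega]_{A_\infty})$ and using $\log t\le t^{\alpha}/\alpha$ together with the sharp reverse H\"older inequality (Lemma \ref{lm 2.1}) to get $\tfrac1\varepsilon M_{L(\log L)^{4r'-3+\varepsilon}}\omega\le c[\omega]_{A_\infty}^{4r'-3}\log(e+[\omega]_{A_\infty})M\omega$; quoting a fixed-$\alpha$ pointwise bound does not account for the $\varepsilon$-dependence. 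As written, the proposal is a correct plan with the decisive estimates missing; it would be complete if you either supplied the endpoint lemma in full or simply cited it as the paper does.
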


\begin{remark}
	When $r'=1$, the conclusions of Theorem $\ref{Theorem1.5}$ coincide with the case of commutator of Calder\'{o}n-Zygmund operator in \cite{IR, LOR1}. And the weak-type endpoint estimates for the variation of commutators are new, even in the un-weighted case. In addition, note that if $\Omega\in {\rm Lip}_\alpha(S^{n-1})$ with $0<\alpha\le 1$, then $K(x,y)=\Omega(x-y)/|x-y|^n\in \mathcal{H}_{\rm Dini}\subset \mathcal{H}_r$, $1\le r\le \infty$. Combing our Theorem $\ref{Theorem1.5}$ and  \cite[Theorem A]{CJRW2}, we obtain the following corollary.
	
\end{remark}

\begin{corollary}\label{Corollary1.6}\,\
	Let $\Omega$ be a homogeneous function of degree zero and have mean value zero on $S^{n-1}$ ( the unit sphere in $\mathbb{R}^n$).  Suppose that $\Omega\in {\rm Lip}_\alpha(S^{n-1})$ with $0<\alpha\le 1$, $b\in {\rm BMO}(\mathbb{R}^n)$, $\omega\in A_1$ and $\rho>2$. Then for all functions $f$ and all $\lambda> 0$,
	\begin{align*}
		\omega(\{x\in\mathbb{R}^n:\mathcal{V}_{\rho}(\mathcal{T}_{\Omega,b}f)(x)>\lambda\})&\leq C[\omega]_{A_1}[\omega]_{A_\infty}[\log(e+[\omega]_{A_\infty})]\\
		&\qquad\times\int_{\mathbb{R}^n}\Phi\Big(\frac{\|b\|_{\rm BMO}|f(x)|}{\lambda}
		\Big)\omega(x)dx,
	\end{align*}
	where $\Phi(t)=t\log(e+t)$, $\mathcal{T}_{\Omega,b}$ is defined as $\mathcal{T}_b$ in Definition $\ref{def1.2}$ with $K(x,y)=\Omega(x-y)/|x-y|^n$.
\end{corollary}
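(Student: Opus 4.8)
The plan is to obtain this as a direct specialization of Theorem~\ref{Theorem1.5} together with the classical boundedness result for $\rho$-variation of homogeneous singular integrals. First I would verify the hypotheses of Theorem~\ref{Theorem1.5}. Since $\Omega\in\mathrm{Lip}_\alpha(S^{n-1})$ with $0<\alpha\le1$, the kernel $K(x,y)=\Omega(x-y)/|x-y|^n$ satisfies the size condition $(\ref{(1.4)})$ because $\Omega$ is bounded on the sphere, and it belongs to $\mathcal{H}_{\mathrm{Dini}}$ by the standard estimate: for $|x-z|$ small compared to $|x-y|$, the H\"older continuity of $\Omega$ gives $|K(x,y)-K(z,y)|\lesssim |x-z|^\alpha/|x-y|^{n+\alpha}$, whose Dini modulus is integrable. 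By the chain of inclusions $\mathcal{H}_{\mathrm{Dini}}\subset\mathcal{H}_r$ recorded in the excerpt (valid for every $1\le r\le\infty$), we get $K\in\mathcal{H}_r$ for any admissible $r$; in particular I would take $r=\infty$, so that $r'=1$.

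Next I need the $L^{q_0}$-boundedness of $\mathcal{V}_\rho(\mathcal{T})$ for some $1<q_0<\infty$, where $\mathcal{T}=\{T_\varepsilon\}_{\varepsilon>0}$ is the family of truncations of the homogeneous singular integral with kernel $\Omega(x-y)/|x-y|^n$. This is exactly the content of \cite[Theorem A]{CJRW2}: for $\Omega$ homogeneous of degree zero with mean value zero on $S^{n-1}$ and sufficiently regular (Lipschitz suffices), the $\rho$-variation operator $\mathcal{V}_\rho(\mathcal{T})$ is bounded on $L^p(\mathbb{R}^n)$ for all $1<p<\infty$ and all $\rho>2$. So I would fix, say, $q_0=2$ and invoke that theorem to supply the required hypothesis.

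With all hypotheses of Theorem~\ref{Theorem1.5} in place and $r'=1$, I would apply its last assertion (the $\omega\in A_1$ case). Setting $r'=1$ there, the weight factor $[\omega]_{A_\infty}^{4r'-3}=[\omega]_{A_\infty}^{1}$, the integrand exponent becomes $\psi(t)=t[\log(e+t)]=\Phi(t)$, and $M_{L(\log L)^{4r'-3+\epsilon}}\omega$ specializes appropriately; the conclusion reads precisely
$$\omega(\{x:\mathcal{V}_\rho(\mathcal{T}_{\Omega,b}f)(x)>\lambda\})\le C[\omega]_{A_1}[\omega]_{A_\infty}[\log(e+[\omega]_{A_\infty})]\int_{\mathbb{R}^n}\Phi\Big(\frac{\|b\|_{\mathrm{BMO}}|f(x)|}{\lambda}\Big)\omega(x)\,dx,$$
which is the claimed inequality. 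The constant $C$ depends only on $n$, $\rho$, and the $L^2\to L^2$ norm of $\mathcal{V}_\rho(\mathcal{T})$, the latter being controlled in terms of $\|\Omega\|_{\mathrm{Lip}_\alpha(S^{n-1})}$, $n$, and $\rho$ by \cite{CJRW2}, so it is an absolute constant once $\Omega$ and $\rho$ are fixed.

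There is essentially no obstacle here beyond bookkeeping: the only point requiring minor care is confirming that the class considered in \cite[Theorem A]{CJRW2} indeed includes all $\Omega\in\mathrm{Lip}_\alpha(S^{n-1})$ (it does, since Lipschitz regularity is stronger than the Dini-type condition imposed there) and that its quantitative dependence on $\Omega$ is admissible, so that the operator-norm constant appearing in Theorem~\ref{Theorem1.5} can be absorbed into the single constant $C$. Once that is checked, the corollary is immediate.
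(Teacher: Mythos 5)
Your proposal is correct and follows exactly the route the paper intends: the remark preceding the corollary notes that $\Omega\in{\rm Lip}_\alpha(S^{n-1})$ gives $K\in\mathcal{H}_{\rm Dini}\subset\mathcal{H}_\infty$ (so $r'=1$), the $L^{q_0}$-boundedness comes from \cite[Theorem A]{CJRW2}, and the $\omega\in A_1$ case of Theorem \ref{Theorem1.5} with $r'=1$ yields the stated bound since $4r'-3=1$ and $\psi=\Phi$. Nothing further is needed.
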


We organize the rest of the paper as follows. In Section 2, we will recall some related definitions and auxiliary lemmas. The proofs of Theorems $\ref{Theorem1.1}$ and $\ref{Theorem1.2}$ will be given in Section 3. In Section 4  we will prove Theorems $\ref{Theorem1.4}$ and $\ref{Theorem1.5}$. Finally, we will apply the sparse domination to present the local exponential decay estimates of variation operators in Section 5.

Through out the rest of our paper, we will denote positive constants by $C$, which may change at each occurrence. If $f\leq Cg$ and $f\lesssim g \lesssim f$, we denote $f\lesssim g$, $f\thicksim g$, respectively. We write the side length of $Q$ by $l_Q$.

\section{Preliminaries}
In this section, we recall some well known definitions and properties which will be used later.
\subsection{Weights}

A weight is a nonnegative and locally integrable function on $\mathbb{R}^n$. Given a pair of weights $\omega$ and $\sigma$, which satisfy
$$[\omega,\sigma]_{A_p}=\sup_{Q}\Big(\frac{1}{|Q|}\int_{Q}w(y)dy\Big)\Big(\frac{1}{|Q|}
\int_{Q}\sigma(y)dy\Big)^{p-1}<\infty,$$
and for $\omega\in A_{\infty}$,
$$[\omega]_{A_\infty}=\sup_{Q}\frac{1}{\omega(Q)}\int_{Q}M(\omega\chi_Q)(x)dx,$$
where the supremum is taken over all cubes $Q\subset \mathbb{R}^n$. Observe that if $\omega\in A_p$ and $\sigma=\omega^{1-p'}$, then $[\omega,\sigma]_{A_p}=[\omega]_{A_p}$. Recall that $A_\infty=\bigcup_{p\geq1}A_p$. The $A_\infty$ constant $[\omega]_{A_\infty}$ given above was shown in \cite{HP} to be the most suitable one and the following optimal reverse H\"{o}rder inequality was also obtained.
\begin{lemma}\rm(cf. \cite{HP})\label{lm 2.1}\,\
	Let $\omega\in A_\infty$, then for every cube $Q$,
	$$\Big(\frac{1}{|Q|}\int_{Q}\omega(x)^{r_\omega}dx\Big)^{1/r_\omega}\leq
	\frac{2}{|Q|}\int_Q\omega(x)dx,$$
	where $r_\omega=1+1/(\tau_n[\omega]_{A_\infty})$ with $\tau_n$ a dimensional constant independent $\omega$ and $Q$.
\end{lemma}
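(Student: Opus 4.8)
The plan is to reduce the statement to a sharp quantitative self-improvement of the Fujii--Wilson $A_\infty$ condition and then to integrate by hand. First, a routine truncation argument reduces matters to the case $\omega\in L^\infty(Q)$: one applies the conclusion to the bounded weights $\omega_N:=\min(\omega,N)$, whose $A_\infty$ constants are controlled by $[\omega]_{A_\infty}$, and passes to the limit $N\to\infty$ using monotone convergence on the left-hand side and dominated convergence on the right. So assume $\omega$ is bounded on $Q$.

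The heart of the matter is the estimate
$$\esss_{Q}\omega\ \le\ e^{\,c_n[\omega]_{A_\infty}}\,\frac1{|Q|}\int_Q\omega\qquad(\omega\in A_\infty\cap L^\infty),$$
a sharp $L\log L$-type self-improvement of the $A_\infty$ condition, which I would obtain through a Calder\'on--Zygmund stopping time. Write $\langle\omega\rangle_Q=\frac1{|Q|}\int_Q\omega$, let $M^d_{\mathcal D(Q)}$ denote the dyadic maximal operator relative to $Q$, fix a dilation factor $a>2^n$, and for $k\ge1$ let $\{Q^{(k)}_j\}_j$ be the maximal dyadic subcubes of $Q$ with $\langle\omega\rangle_{Q^{(k)}_j}>a^k\langle\omega\rangle_Q$, so that $\bigcup_jQ^{(k)}_j=\{x\in Q:M^d_{\mathcal D(Q)}(\omega\chi_Q)(x)>a^k\langle\omega\rangle_Q\}$. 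Since $M^d_{\mathcal D(R)}(\omega\chi_R)\le M(\omega\chi_R)$ pointwise, the definition of $[\omega]_{A_\infty}$ gives $\frac1{|R|}\int_R M^d_{\mathcal D(R)}(\omega\chi_R)\le[\omega]_{A_\infty}\langle\omega\rangle_R$ for every cube $R$; applying this on each selected cube $Q^{(k)}_j$ lets one compare the $k$-th and $(k{+}L)$-th generations of stopping cubes, and once $a^L$ exceeds a dimensional multiple of $[\omega]_{A_\infty}$ one gains a definite factor. Iterating yields exponential decay of the level sets of $M^d_{\mathcal D(Q)}(\omega\chi_Q)$ at a rate comparable to $1/[\omega]_{A_\infty}$, and for bounded $\omega$ summing this decay gives precisely the displayed bound. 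This is the step performed carefully in \cite{HP}, and the Fujii--Wilson form of $[\omega]_{A_\infty}$ is used here in an essential way.

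It then remains only to integrate. Put $\varepsilon:=r_\omega-1=\frac1{\tau_n[\omega]_{A_\infty}}$. For bounded $\omega$ the displayed estimate gives at once
$$\frac1{|Q|}\int_Q\omega^{\,r_\omega}\ =\ \frac1{|Q|}\int_Q\omega\cdot\omega^{\varepsilon}\ \le\ (\esss_Q\omega)^{\varepsilon}\,\langle\omega\rangle_Q\ \le\ e^{\,c_n\varepsilon[\omega]_{A_\infty}}\,\langle\omega\rangle_Q^{\,\varepsilon+1}\ =\ e^{\,c_n/\tau_n}\,\langle\omega\rangle_Q^{\,r_\omega},$$
so choosing the dimensional constant $\tau_n:=c_n/\log 2$ (the $\tau_n$ of the statement) makes the last quantity at most $2^{\,r_\omega}\langle\omega\rangle_Q^{\,r_\omega}$; taking $r_\omega$-th roots yields $\big(\frac1{|Q|}\int_Q\omega^{r_\omega}\big)^{1/r_\omega}\le 2\langle\omega\rangle_Q$, which together with the reduction above is the claim.

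The only genuine difficulty is the stopping-time step: a careless run of it yields merely a logarithmic gain in $[\omega]_{A_\infty}$ --- a reverse H\"older exponent of the order of $1+1/\log(e+[\omega]_{A_\infty})$ rather than the asserted $1+1/(\tau_n[\omega]_{A_\infty})$ --- so extracting the correct linear dependence, along with the clean constant $2$, is exactly what needs the precise bookkeeping of \cite{HP}.
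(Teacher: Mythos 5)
Your argument hinges on the estimate
$$\esss_{Q}\,\omega\ \le\ e^{\,c_n[\omega]_{A_\infty}}\,\frac1{|Q|}\int_Q\omega$$
for $\omega\in A_\infty\cap L^\infty$, and this estimate is false: no bound of the essential supremum by a multiple of the average depending only on $[\omega]_{A_\infty}$ can hold. Take $n=1$, $\omega(x)=|x|^{-1/2}$, which is an $A_1$ weight and hence lies in $A_\infty$ with finite Fujii--Wilson constant, and set $\omega_N=\min(\omega,N)$. The truncations have $A_1$ (hence $A_\infty$) constants bounded uniformly in $N$, yet on $Q=[-1,1]$ one has $\esss_Q\omega_N=N\to\infty$ while $\langle\omega_N\rangle_Q\le\langle\omega\rangle_Q<\infty$. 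So the ratio $\esss_Q\omega_N/\langle\omega_N\rangle_Q$ is unbounded along a family with uniformly controlled $[\omega_N]_{A_\infty}$, and your reduction to bounded weights cannot repair this, since the constant in the sup bound would have to be uniform in $N$ for the limiting argument to go through. Put differently, your claimed inequality asserts that every $A_\infty$ weight is locally in $RH_\infty$ with controlled constant, which is a strictly stronger (and wrong) conclusion than the reverse H\"older inequality you are trying to prove; the reverse H\"older inequality is an integrability gain, not a boundedness statement. The final ``integration'' step is sound \emph{given} the sup bound, but the premise fails, so the proof collapses.

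For the record, the paper does not prove this lemma itself; it quotes it from Hyt\"onen--P\'erez. The genuine proof there does start, as you suggest, from level sets of the local dyadic maximal function $M^d_{\mathcal D(Q)}(\omega\chi_Q)$, but it never passes through an $L^\infty$ bound: one writes $\frac1{|Q|}\int_Q\omega^{1+\varepsilon}\le\frac1{|Q|}\int_Q\big(M^d_{\mathcal D(Q)}(\omega\chi_Q)\big)^{\varepsilon}\omega$, expands the right-hand side by the layer-cake formula over the level sets $\Omega_t$, uses the elementary bound $\omega(\Omega_t)\le 2^n t|\Omega_t|$ from the stopping cubes together with the Fujii--Wilson condition applied on those cubes, and closes a self-improving inequality that is finite precisely when $\varepsilon\lesssim 1/[\omega]_{A_\infty}$. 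If you want to salvage your write-up, you should replace the sup-norm step by this distributional bookkeeping.
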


\subsection{Sparse family}

In this subsection, we will introduce a quite useful tool, which has been borrowed from \cite{LN}.

In the following, we call $\mathcal{D}(Q)$ the dyadic grid obtained by repeatedly subdividing $Q$ and its descendants in $2^n$ cubes with the same side length.
\begin{definition}
	A family of cubes is said to be a dyadic lattice $\mathcal{D}$ if it satisfies the following properties:\\
	$(1)$ if $Q\in\mathcal{D}$, then every descendant of $Q$ is also in $\mathcal{D}$;\\
	$(2)$ for every two cubes $Q_1,Q_2\in\mathcal{D}$, we can find a common ancestor $Q\in\mathcal{D}$ such that $Q_1,Q_2\in\mathcal{D}(Q)$;\\
	$(3)$ for each compact set $K\subseteq\mathbb{R}^n$, we can find a cube $Q\in\mathcal{D}$ such that $K\subseteq Q$.
\end{definition}

The following lemma is called the Three Lattice Theorem, which will play a key role in our proofs.
\begin{lemma}\rm(cf. \cite{LN})\label{lm 2.3}\,\
	Given a dyadic lattice $\mathcal{D}$, there exist $3^n$ dyadic lattices $\mathcal{D}_1,\dots,\mathcal{D}_{3^n}$ such that
	$$\{3Q:Q\in\mathcal{D}\}=\bigcup_{j=1}^{3^n}\mathcal{D}_j$$
	and for each cube $Q\in\mathcal{D}$ we can find a cube $R_Q$ in each $\mathcal{D}_j$ such that $Q\subseteq R_Q$ and $3l_Q=l_{R_Q}$.
\end{lemma}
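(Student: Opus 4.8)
The plan is to reproduce the classical argument of Lerner and Nazarov behind \cite{LN}. The key object is the \emph{generational} structure of $\mathcal{D}$: I would first record that, by axioms (2) and (3), all side lengths of cubes of $\mathcal{D}$ lie in $\{l_0 2^{-k}:k\in\mathbb{Z}\}$ for a fixed $l_0$ (call the cubes of side $l_0 2^{-k}$ the \emph{$k$-th generation}), that for each $k$ the generation-$k$ cubes partition $\mathbb{R}^n$, and that the $\mathcal{D}$-parent chain $Q\subsetneq\widehat Q\subsetneq\widehat{\widehat Q}\subsetneq\cdots$ of any cube exhausts $\mathbb{R}^n$. Fix a generation $k$ and put $l=l_0 2^{-k}$. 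The generation-$k$ cubes are the $l\mathbb{Z}^n$-translates of a fixed one, and $3l\mathbb{Z}^n$ is a subgroup of index $3^n$; this splits generation $k$ into $3^n$ classes $\mathcal{C}^k_\beta$, indexed by $\beta\in(\mathbb{Z}/3\mathbb{Z})^n$, according to the class of the corner of $Q$ modulo $3l\mathbb{Z}^n$. The elementary observation that makes the whole argument run is that for a fixed $\beta$ the dilated cubes $\{3Q:Q\in\mathcal{C}^k_\beta\}$ have side length $3l$ and are $3l\mathbb{Z}^n$-translates of one another, hence they \emph{tile} $\mathbb{R}^n$.

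Next I would analyse how these tilings interact across generations. A short computation with centres and side lengths gives: (i) $3Q\subseteq 3\widehat Q$ for every $Q$; and (ii) for $Q\in\mathcal{C}^k_\beta$, among the dilated generation-$(k+1)$ cubes contained in $3Q$ there is exactly one class $\mathcal{C}^{k+1}_\gamma$ that contains $2^n$ of them, and those $2^n$ cubes partition $3Q$; write $\gamma=\pi_k(\beta)$. A direct residue computation (incrementing one coordinate of $\beta$ increments the corresponding coordinate of $\gamma$ by $2$ modulo $3$) shows that $\pi_k\colon(\mathbb{Z}/3\mathbb{Z})^n\to(\mathbb{Z}/3\mathbb{Z})^n$ is a \emph{bijection}. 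Fixing a base generation, say $k=0$, I would then, for each $j\in(\mathbb{Z}/3\mathbb{Z})^n$, set $\beta(j,0)=j$ and propagate upward and downward through $\pi_k$ and $\pi_k^{-1}$ to obtain a consistent colour $\beta(j,k)$ at every generation, and define
$$\mathcal{D}_j:=\bigcup_{k\in\mathbb{Z}}\{3Q:Q\in\mathcal{C}^k_{\beta(j,k)}\}.$$

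Then I would verify the two conclusions. Since $j\mapsto\beta(j,k)$ is a bijection for each $k$, every cube $3Q$ with $Q\in\mathcal{D}$ lies in exactly one $\mathcal{D}_j$, so $\bigcup_j\mathcal{D}_j=\{3Q:Q\in\mathcal{D}\}$. For the last assertion, given $Q\in\mathcal{D}$ of generation $k$, the generation-$k$ cubes $Q'$ with $Q\subseteq 3Q'$ are precisely those whose centre differs from that of $Q$ by an element of $l\{-1,0,1\}^n$; these $3^n$ cubes are pairwise incongruent modulo $3l\mathbb{Z}^n$, hence represent each colour exactly once, so for every $j$ there is a unique $Q'\in\mathcal{C}^k_{\beta(j,k)}$ with $Q\subseteq 3Q'$, and $R_Q:=3Q'\in\mathcal{D}_j$ satisfies $Q\subseteq R_Q$ and $l_{R_Q}=3l_Q$. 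Finally, each $\mathcal{D}_j$ is a dyadic lattice: its generation-$k$ cubes tile $\mathbb{R}^n$ by construction, and each such cube $3Q$ is partitioned by its $2^n$ generation-$(k+1)$ children because $\beta(j,k+1)=\pi_k(\beta(j,k))$ is the compatible colour, which gives axiom (1); axiom (3) follows by applying the $R_Q$-statement to a cube of $\mathcal{D}$ containing a given compact set; and axiom (2) follows from axiom (3), since $3Q_1,3Q_2\subseteq 3Q_*$ for any $\mathcal{D}$-common ancestor $Q_*$ of $Q_1$ and $Q_2$.

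The crux — the only place where the argument is more than bookkeeping — is step (ii): that the dilated cubes of consecutive generations interlock through a \emph{bijection} $\pi_k$ of the $3^n$ colour classes, the compatible colour subdividing each tripled cube into exactly $2^n$ tripled cubes. This is precisely what allows a single globally consistent choice of colours, i.e. what turns the overlapping families $\{3Q:Q\in\mathcal{D}\}$ into honest nested dyadic lattices; once it is established, everything else is a routine unwinding of the dyadic-lattice axioms and of translation invariance.
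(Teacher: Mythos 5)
The paper does not prove Lemma \ref{lm 2.3}; it only quotes it from Lerner--Nazarov, and your argument is a correct and complete reconstruction of exactly the proof given in that reference: split each generation of $\mathcal{D}$ into $3^n$ congruence classes modulo $3l\mathbb{Z}^n$, observe that the tripled cubes of a fixed class tile $\mathbb{R}^n$, and use the bijection $\pi_k$ between compatible classes of consecutive generations to assemble the $3^n$ lattices. All the verifications (the tiling, the bijectivity of $\pi_k$, the $3^n$ pairwise incongruent cubes $Q'$ with $Q\subseteq 3Q'$, and the three lattice axioms for each $\mathcal{D}_j$) are carried out correctly, so there is nothing to add.
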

\begin{remark}\label{Remark 2.4}\,\
	Fix a dyadic lattice $\mathcal{D}$. For any cube $Q\subset\mathbb{R}^n$, we can always find a cube $Q'\in\mathcal{D}$ such that $l_Q/2<l_{Q'}\leq l_Q$ and $Q\subset3Q'$. By the above lemma, for some $j\in\{1,\dots,3^n\}$, it is easy to see that $3Q'=P\in\mathcal{D}_j$. Hence, for each cube $Q\subset\mathbb{R}^n$, we can find a cube $P\in\mathcal{D}_j$ that satisfies $Q\subset P$ and $l_P\leq3l_Q$.
\end{remark}

By the definition of dyadic lattice, we can give the definition of sparse family.
\begin{definition}
	Let $\mathcal{D}$ be a dyadic lattice. $\mathcal{S}\subset\mathcal{D}$ is a $\eta$-sparse family with $\eta\in(0,1)$ if for every cube $Q\in\mathcal{S}$, we can find a measurable subset $E_Q\subset Q$ such that $\eta|Q|\leq|E_Q|$, where all the $E_Q$ are pairwise disjoint.
\end{definition}
Let $r>0$ and $\mathcal{S}$ be a $\eta$-sparse family, we define the sparse operator as
\begin{equation*}
	\mathcal{A}_{r,\mathcal{S}}f(x)=\Big(\sum_{Q\in\mathcal{S}}\Big(\frac{1}{|Q|}\int_{Q}|f(y)|dy
	\Big)^r\chi_Q(x)\Big)^{1/r}.
\end{equation*}
Let $\omega,\sigma$ be a pair of weights, for $1<p<\infty$, $r>0$. In \cite{HL}, Hyt\"{o}nen et al. proved that
\begin{equation}\label{eq 2.1}\,\
	\|\mathcal{A}_{r,\mathcal{S}}(\sigma f)\|_{L^p(\omega)}\lesssim [\omega,\sigma]_{A_{p}}^{1/p}\Big([\omega]_{A_\infty}^{(1/r-1/p)_+}+
	[\sigma]_{A_\infty}^{1/p}\Big)\|f\|_{L^p(\sigma)},
\end{equation}
\begin{equation}\label{eq 2.2}\,\
	\|\mathcal{A}_{r,\mathcal{S}}(\sigma f)\|_{L^{p,\infty}(\omega)}\lesssim [\omega,\sigma]_{A_{p}}^{1/p}[\omega]_{A_\infty}^{(1/r-1/p)_+}
	\|f\|_{L^p(\sigma)}\quad p\neq r,
\end{equation}
where
\begin{equation*}
	(\alpha)_{+}=
	\begin{cases}
		\alpha\quad$if$~\alpha>0,\\
		0\quad$otherwise$.\\
	\end{cases}
\end{equation*}

\subsection{Young function and Orlicz maximal operators}

In this subsection, we recall some fundamental facts about Young functions, Orlicz local averages, for more details, see \cite{RR}.

We say that a function $A:[0,\infty)\rightarrow[0,\infty)$ is a Young function if $A$ is a continuous, convex increasing function that satisfies $A(0)=0$ and $\lim_{t\rightarrow\infty}A(t)=\infty$.
The $A$-norm of $f$ over $Q$ is defined as
$$\|f\|_{A(\mu),Q}:=\inf\Big\{\lambda>0:\frac{1}{\mu(Q)}\int_QA
\Big(\frac{|f(x)|}{\lambda}\Big)d\mu\leq1\Big\}.$$
We'll denote $\|f\|_{A,Q}$ if $\omega$ is the Lebesgue measure and write $\|f\|_{A(\omega),Q}$ if $\mu=\omega dx$ is an absolutely continuous measure with respect to the Lebesgue measure. We then define the Orlicz maximal operator $M_Af(x)$ in a natural way by
$$M_Af(x):=\sup_{Q\ni x}\|f\|_{A,Q}.$$
For every Young function $A$, we can define its complementary function $\overline{A}$ by
$$\overline{A}(t)=\sup_{s>0}\{st-A(s)\}.$$
There are some interesting properties such as a generalized H\"{o}lder inequality
\begin{equation}\label{eq 2.3}\,\
	\frac{1}{\mu(Q)}\int_Q|f(x)g(x)|d\mu(x)\leq2\|f\|_{A(\mu),Q}\|g\|_{\bar{A}(\mu),Q}.
\end{equation}
Now we present some particular cases of maximal operators.\\
\quad$\bullet$ If $A(t)=t^r$ with $r>1$, then $M_A=M_r$.\\
\quad$\bullet$ $M_A=M_{L\log L^\alpha}$ with $\alpha>0$ given by the function $A(t)=t(\log(e+t))^\alpha$. Note that for any $\alpha>0$, $M\lesssim M_A\lesssim M_r$ for every $1<r<\infty$.\\
\quad$\bullet$ If $A(t)=t\log(e+t)^\alpha\log(e+\log(e+t))^{\beta}$ with $\alpha,\beta>0$, then we denote $M_A=M_{L(\log L)^\alpha(\log\log L)^\beta}$.

Finally, to prove Theorem $\ref{Theorem1.5}$, we will use the following lemma, which was proved essentially in \cite{IR}.

\begin{lemma}\label{lm 2.6}\,\
	Let $b\in {\rm BMO}$ and $\psi_0(t)=t^{r'}$ for $1\leq r'<\infty$. Let $\psi$ be a Young function such that $\psi_{1}^{-1}(t)\bar{\psi}_{0}^{-1}(t)\bar{C}_{1}^{-1}(t)\lesssim t$ with $\bar{C}_{1}(t)=e^{t}$ for $t\geq1$. Assume that $\psi(xy)\lesssim\psi(x)\psi(y)$, and $\beta_{n}$ is the constant such that $e^{\frac{(3/2)^{k-1}}{2^ne}-1}\geq\max(e^2,4^k)$ for $k>\beta_{n}$. Then for every weight $\omega$, and Young functions $\varphi_0,\varphi_1$,
	$$\omega(\{x\in\mathbb{R}^n:\mathcal{A}_{\mathcal{S},b}(f)(x)>\lambda\})\leq c\sum_{j=0}^{1}\Big(\kappa_{\varphi_j}\int_{\mathbb{R}^n}\psi_j\Big({\frac{\|b\|_{\rm BMO}|f(x)|}{\lambda}}\Big)
	M_{\Phi_{1-j}\circ\varphi_j}\omega(x)dx\Big),$$
	where $\mathcal{A}_{\mathcal{S},b}(f)(x)=\sum_{R\in \mathcal{S}}\Big\{|b(x)-b_R|\langle|f|^{r'}\rangle_{R}^{\frac{1}{r'}}+
	\langle|f(b-b_R)|^{r'}\rangle_{R}^{\frac{1}{r'}}\Big\}\chi_R(x)$, $\Phi_{j}(t)=t\log(e+t)^{j}$, $j=0,1$, and
	\begin{equation*}
		\kappa_{\varphi_j}=
		\begin{cases}
			\sum_{k=1}^{\beta_{n}}\frac{4^{k(r'-1)}\varphi_{0}^{-1}\circ\Phi_{1}^{-1}(1/\alpha_k)}
			{\Phi_{1}^{-1}(1/\alpha_k)}+c_n\int_{1}^{\infty}\frac{\varphi_{0}^{-1}\circ\Phi_{1}^{-1}(t)
				\psi_0((\log(e+t))^4)}{t^2(\log(e+t))^4},~j=0\\
			\int_{1}^{\infty}\frac{\varphi_{1}^{-1}(t)\psi((\log(e+t))^2)}{t^2(\log(e+t))^3}dt,~j=1\\
		\end{cases}
	\end{equation*}
	with $\alpha_k=\min(1,e^{-\frac{(3/2)^{k}}{2^ne}+1})$.
\end{lemma}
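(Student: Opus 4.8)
The plan is to reduce the statement to the known sparse-domination bound \eqref{(1.6)} of Theorem~\ref{Theorem1.1} together with a pointwise/Orlicz analysis of the bilinear sparse form $\mathcal{A}_{\mathcal{S},b}$, essentially following the scheme of \cite{IR}. First I would fix a sparse family $\mathcal{S}$ and split $\mathcal{A}_{\mathcal{S},b}(f)$ into the two natural pieces, the ``$|b(x)-b_R|$'' part $\mathcal{A}^{1}_{\mathcal{S},b}(f)$ and the ``$\langle|f(b-b_R)|^{r'}\rangle_R^{1/r'}$'' part $\mathcal{A}^{0}_{\mathcal{S},b}(f)$, and estimate the level set of each separately, so that it suffices to show, for each $j\in\{0,1\}$, a bound of the shape
\begin{equation*}
	\omega\big(\{x:\mathcal{A}^{j}_{\mathcal{S},b}(f)(x)>\lambda\}\big)\le c\,\kappa_{\varphi_j}\int_{\mathbb{R}^n}\psi_j\Big(\tfrac{\|b\|_{\rm BMO}|f(x)|}{\lambda}\Big)M_{\Phi_{1-j}\circ\varphi_j}\omega(x)\,dx .
\end{equation*}
Normalizing $\|b\|_{\rm BMO}=1$ and $\lambda=1$ (by homogeneity of $\psi_j$ and the hypothesis $\psi(xy)\lesssim\psi(x)\psi(y)$, which lets constants pass through), I would then run a Calderón–Zygmund–type stopping argument on the sparse family: decompose $\mathcal{S}=\bigcup_k\mathcal{S}_k$ according to the dyadic size of $\langle|f|^{r'}\rangle_R^{1/r'}$ (or, for the $j=0$ part, according to the oscillation level as measured by $e^{\,\cdot\,}$, which is exactly where the constants $\alpha_k$, $\beta_n$ and the threshold $e^{(3/2)^{k-1}/(2^ne)-1}\ge\max(e^2,4^k)$ enter), and use the sparseness $\eta|R|\le|E_R|$ to convert sums of $|R|$ into an integral.

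For the main term, the key analytic inputs are: (i) the John–Nirenberg inequality in the form $\|e^{|b-b_R|/(c_n\|b\|_{\rm BMO})}\|_{L^1,R}\lesssim 1$, used to control $|b(x)-b_R|$ and the averages $\langle|f(b-b_R)|^{r'}\rangle_R^{1/r'}$ after a generalized Hölder inequality \eqref{eq 2.3} splitting $f\cdot(b-b_R)$ in the pair of Young functions $\psi_0(t)=t^{r'}$, $\bar C_1(t)=e^t$ and a third function $\psi_1$ with $\psi_1^{-1}(t)\bar\psi_0^{-1}(t)\bar C_1^{-1}(t)\lesssim t$; and (ii) the dual/weak-type testing against $\omega$: for a subfamily where the relevant average is $\sim 2^k$, the exceptional set has $\omega$-measure controlled by $\sum_{R\in\mathcal{S}_k}\omega(R)$, and one bounds $\omega(R)/|R|$ by a value of the Orlicz maximal function $M_{\Phi_{1-j}\circ\varphi_j}\omega$ on $E_R$ via the definition of the $\|\cdot\|_{A(\omega),Q}$ norm and the generalized Hölder inequality in the measure $\omega\,dx$. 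Summing over $k$ produces exactly the series defining $\kappa_{\varphi_j}$: the ``$1/t^2(\log(e+t))^3$'' weight with a $\varphi_1^{-1}(t)\psi((\log(e+t))^2)$ factor for $j=1$, and the split into a finite sum over $k\le\beta_n$ plus the tail integral with $\psi_0((\log(e+t))^4)$ for $j=0$, the power $4$ coming from iterating John–Nirenberg against the two BMO factors inside both $|b(x)-b_R|$ and $\langle|f(b-b_R)|^{r'}\rangle_R^{1/r'}$.

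The step I expect to be the main obstacle is the bookkeeping in the $j=0$ term: one must carefully track how the exponential Orlicz bump $\bar C_1(t)=e^t$ interacts with the dyadic decomposition, isolating the ``small $k$'' regime (where $e^{(3/2)^{k-1}/(2^ne)-1}$ is not yet $\ge\max(e^2,4^k)$, giving the finite sum $\sum_{k=1}^{\beta_n}$ with the crude factor $4^{k(r'-1)}$) from the ``large $k$'' tail where the exponential gain kicks in and can be traded, through $\varphi_0^{-1}\circ\Phi_1^{-1}$, for the convergent integral. Getting the exponents right — in particular that the iterated John–Nirenberg estimates cost $(\log(e+t))^4$ rather than something larger, and that the $\Phi_{1-j}$ (not $\Phi_j$) appears composed with $\varphi_j$ in the maximal operator acting on $\omega$ — requires matching the duality carefully. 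Once both pieces are in hand, adding the two level-set estimates and renormalizing $\lambda$ and $\|b\|_{\rm BMO}$ back in (using $\psi(xy)\lesssim\psi(x)\psi(y)$ once more) yields the claimed inequality, and since this is precisely the statement proved in \cite{IR} for the generic bilinear sparse form, I would cite that argument for the remaining routine details.
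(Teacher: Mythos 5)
Your proposal is correct and matches the paper's treatment: the paper does not actually prove Lemma \ref{lm 2.6} but, as stated in Remark \ref{Remark 2.7}, simply observes that it is the special case of Theorem 2.7 of \cite{IR} with the constants tracked, and omits all details. Your outline (splitting $\mathcal{A}_{\mathcal{S},b}$ into its two pieces, normalizing, running the level-set/stopping-time analysis with John--Nirenberg and the generalized H\"older inequality against the Orlicz maximal function of $\omega$) is a faithful sketch of that cited argument, and like the paper you ultimately defer to \cite{IR} for the remaining details.
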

\begin{remark}\label{Remark 2.7}\,\
	Lemma $\ref{lm 2.6}$ is a special case of Theorem $2.7$ in \cite{IR} with a carefully
	checking in the constants. We omit the details.
\end{remark}

\section{Proofs of Theorems $\ref{Theorem1.1}$ and $\ref{Theorem1.2}$}\label{section 3}

This section is devoted to Theorems $\ref{Theorem1.1}$ and $\ref{Theorem1.2}$, which will be based on a pointwise estimate of the grand maximal operators associated with the variation operators.
At first, we define the grand maximal truncated operators $\mathcal{M}_{\mathcal{V}_{\rho}(\mathcal{T})}$ by
$$\mathcal{M}_{\mathcal{V}_{\rho}(\mathcal{T})}f(x)=\sup_{Q\ni x}\esss_{\xi\in Q}\mathcal{V}_{\rho}(\mathcal{T}f\chi_{\mathbb{R}^n\backslash 3Q})(\xi).$$
Given a cube $Q_0$, we also consider a local version of $\mathcal{M}_{\mathcal{V}_{\rho}(\mathcal{T}),Q_0}$ by
\begin{equation*}
	\mathcal{M}_{\mathcal{V}_\rho(\mathcal{T}),Q_0}f(x):=
	\begin{cases}
		\sup_{Q\ni x,Q\subset Q_0}\esss_{\xi\in Q}\mathcal{V}_{\rho}(\mathcal{T}f\chi_{3Q_0\backslash 3Q})(\xi), ~x\in Q_0.\\
		0,~$otherwise$.\\
	\end{cases}
\end{equation*}
To prove our results, we need to fix some notations as in \cite{GT}. Set $\Theta=\{\beta:\beta=\{\epsilon_i\},\epsilon_i\in\mathbb{R},\epsilon_i\downarrow0\}$. We consider the set $\mathbb{N}\times\Theta$ and denote the mixed norm space of two variables functions $g(i,\beta)$ by $F_\rho$ such that
$$\|g\|_{F_\rho}\equiv\sup_{\beta}\Big(\sum_{i}|g(i,\beta)|^\rho\Big)^{{1}/{\rho}}
<\infty.$$
We consider the $F_\rho$-valued operator $V(\mathcal{T}):f\rightarrow V(\mathcal{T})f$ defined by
$$V(\mathcal{T})f(x):=\{T_{\epsilon_{i+1}}f(x)-T_{\epsilon_i}f(x)\}_{\beta=\{\epsilon_i\}\in \Theta}.$$
Then
$$\mathcal{V}_{\rho}(\mathcal{T}f)(x)=\|V(\mathcal{T})f(x)\|_{F_{\rho}}.$$

\begin{lemma}\label{lm 3.1}\,\
	Let $\mathcal{T}=\{T_\varepsilon\}_{\varepsilon>0}$ be given by $(\ref{(1.2)})$. Suppose that $\mathcal{V}_{\rho}(\mathcal{T})$ is bounded on $L^{q_0}(\mathbb{R}^n)$ for some $1<q_0<\infty$. Then for a.e. $x\in Q_0$, $f\in C_{c}^{\infty}(\mathbb{R}^n)$,
	\begin{equation}\label{eq 3.1}\,\
		\mathcal{V}_{\rho}(\mathcal{T}f\chi_{3Q_0})(x)\leq c_{n,q_0}\|{\mathcal{V}}_{\rho}(\mathcal{T})\|_{L^{q_0}\rightarrow L^{q_0}}|f(x)|+\mathcal{M}_{\mathcal{V}_{\rho}(\mathcal{T}),Q_0}f(x).
	\end{equation}
\end{lemma}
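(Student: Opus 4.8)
The plan is to prove the pointwise estimate \eqref{eq 3.1} by fixing a cube $Q_0$ and a point $x\in Q_0$ that is a Lebesgue point for $f$, and then comparing the variation of the globally truncated family applied to $f\chi_{3Q_0}$ with the local grand maximal truncated operator. The natural splitting is by the truncation parameter relative to the distance of $x$ to the boundary of $Q_0$: for a given decreasing sequence $\{\varepsilon_i\}$, the small scales $\varepsilon_i$ (those smaller than roughly $l_{Q_0}$, or more precisely comparable to $\operatorname{dist}(x,\partial Q_0)$) and the large scales $\varepsilon_i$ should be handled differently. Concretely, one writes $T_{\varepsilon_i}(f\chi_{3Q_0})(x) = T_{\varepsilon_i}(f\chi_{3Q_0\setminus 3Q})(x) + T_{\varepsilon_i}(f\chi_{3Q})(x)$ for an appropriately chosen small cube $Q\ni x$, $Q\subset Q_0$, whose size is tied to $\varepsilon_i$; the first term will be absorbed into $\mathcal M_{\mathcal V_\rho(\mathcal T),Q_0}f(x)$, while the second contributes the local piece.

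The key steps, in order, would be as follows. First, I fix $\{\varepsilon_i\}\downarrow 0$ and split the $\rho$-sum $\sum_i |T_{\varepsilon_{i+1}}(f\chi_{3Q_0})(x)-T_{\varepsilon_i}(f\chi_{3Q_0})(x)|^\rho$ into the part where both $\varepsilon_i,\varepsilon_{i+1}$ lie below a threshold $t_0$ comparable to the distance from $x$ to the complement of $Q_0$ inside $3Q_0$, and the part where $\varepsilon_i$ is above that threshold (with at most one ``crossing'' index handled separately using the triangle inequality in $\ell^\rho$ / $F_\rho$). Second, for the large-scale part, for each relevant $\varepsilon_i$ I choose a cube $Q=Q_{\varepsilon_i}\ni x$ contained in $Q_0$ with side length $\sim\varepsilon_i$ so that $B(x,\varepsilon_i)\cap 3Q_0$ differs from $3Q_0\setminus 3Q$ only in a controlled way; this lets me replace $T_{\varepsilon_i}(f\chi_{3Q_0})(x)$ by $T_{\varepsilon_i}(f\chi_{3Q_0\setminus 3Q})(x)$ up to an error integral over an annulus $B(x,\varepsilon_i)\setminus(\text{small ball})$, which by the size condition \eqref{(1.4)} is dominated by $M f(x)\lesssim$ (and one must in fact be careful: since the final bound involves $|f(x)|$, not $Mf(x)$, one instead aims to recognize these annular integrals as telescoping into a single controlled term, or uses that $x$ is a Lebesgue point). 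The variation of the family $\{T_{\varepsilon_i}(f\chi_{3Q_0\setminus 3Q_{\varepsilon_i}})(x)\}$ over the large scales is then bounded by $\mathcal M_{\mathcal V_\rho(\mathcal T),Q_0}f(x)$ by definition of the local grand maximal operator, since for each fixed $Q$, $3Q_0\setminus 3Q$ is the relevant truncation region. Third, for the small-scale part, I use that $T_{\varepsilon_{i+1}}(f\chi_{3Q_0})(x)-T_{\varepsilon_i}(f\chi_{3Q_0})(x) = T_{\varepsilon_{i+1}}(f\chi_{B(x,t_0)})(x)-T_{\varepsilon_i}(f\chi_{B(x,t_0)})(x)$ once $\varepsilon_i<t_0$, so the small-scale variation equals $\mathcal V_\rho(\mathcal T (f\chi_{B(x,t_0)}))(x)$ restricted to those scales; this I bound by the full $\mathcal V_\rho(\mathcal T(f\chi_{B(x,t_0)}))(x)$ and then estimate its average over a small ball around $x$ using the $L^{q_0}$-boundedness of $\mathcal V_\rho(\mathcal T)$ together with Kolmogorov's inequality, picking up the factor $c_{n,q_0}\|\mathcal V_\rho(\mathcal T)\|_{L^{q_0}\to L^{q_0}}$ and, via the Lebesgue differentiation theorem, the factor $|f(x)|$. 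Combining the three contributions and taking the supremum over sequences $\{\varepsilon_i\}$ gives \eqref{eq 3.1}.

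I expect the main obstacle to be the careful bookkeeping of the intermediate (``crossing'') truncation scales and, relatedly, the matching of the integration regions $\{|x-y|>\varepsilon_i\}$ with the cube-based truncation regions $3Q_0\setminus 3Q$ that appear in the definition of $\mathcal M_{\mathcal V_\rho(\mathcal T),Q_0}$: balls and cubes are not nested exactly, so one must absorb the mismatched annular pieces, and doing so while ending with the clean bound $c_{n,q_0}\|\mathcal V_\rho(\mathcal T)\|_{L^{q_0}\to L^{q_0}}|f(x)|$ (rather than a maximal function of $f$) requires working at a Lebesgue point of $f$ and exploiting that $f\in C_c^\infty$ is continuous, so that $\langle |f|\rangle_{B(x,\varepsilon)}\to |f(x)|$ as $\varepsilon\to 0$. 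A secondary technical point is that $\mathcal V_\rho(\mathcal T)$ is only sublinear and $F_\rho$-valued, so all ``triangle inequality'' steps must be carried out in the mixed-norm space $F_\rho$ using the operator $V(\mathcal T)$ introduced above, rather than naively; this is routine but must be stated with care. The $L^{q_0}$-to-Kolmogorov (weak-$(1,1)$-type averaging) step is standard once the localization is in place.
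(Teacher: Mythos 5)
Your overall architecture (split into a local piece near $x$ handled by $L^{q_0}$-boundedness, plus a far piece absorbed into $\mathcal M_{\mathcal V_\rho(\mathcal T),Q_0}f(x)$) matches the paper's, but there is a genuine gap at the decisive step. You propose to bound the local contribution $\mathcal V_\rho(\mathcal T(f\chi_{B(x,t_0)}))(x)$ ``by estimating its average over a small ball around $x$'' via the $L^{q_0}$-bound. A pointwise value of a merely measurable function is not controlled by its average over a ball, and the $L^{q_0}$-boundedness only gives information about norms, never about the value at the particular point $x$. The paper circumvents this with the approximate-continuity device: for a.e.\ $x$, the fixed function $\mathcal V_\rho(\mathcal T f\chi_{3Q_0})$ is approximately continuous at $x$, so its value at $x$ is within $\varepsilon$ of its value at a.e.\ $y$ in a set $E_r(x)\subset B(x,r)$ of density tending to $1$; the splitting $f\chi_{3Q_0}=f\chi_{3Q(x,r)}+f\chi_{3Q_0\setminus 3Q(x,r)}$ is then performed at the nearby point $y$ (with $Q(x,r)\ni x$, so the far piece is still dominated by $\mathcal M_{\mathcal V_\rho(\mathcal T),Q_0}f(x)$), and only afterwards does one average the $q_0$-th power of the local piece over $E_r(x)$, apply the operator norm, and let $r,\varepsilon\to 0$ using that $x$ is a Lebesgue point of $f$. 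Without this (or an equivalent a.e.-selection argument) your plan cannot produce the pointwise bound $c_{n,q_0}\|\mathcal V_\rho(\mathcal T)\|_{L^{q_0}\to L^{q_0}}|f(x)|$.

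A secondary problem is your treatment of the large scales with scale-dependent cubes $Q_{\varepsilon_i}$ of side $\sim\varepsilon_i$. The local grand maximal operator takes, for a \emph{single fixed} cube $Q$, the full variation of $\{T_\varepsilon(f\chi_{3Q_0\setminus 3Q})\}_{\varepsilon>0}$; a variation in which the cutoff region $3Q_0\setminus 3Q_{\varepsilon_i}$ changes with $i$ is not ``by definition'' dominated by it, and the mismatched symmetric differences $3Q_{\varepsilon_{i+1}}\triangle 3Q_{\varepsilon_i}$ would accumulate into a maximal-function-type error rather than the clean $|f(x)|$ term. This part is repairable (use one threshold $t_0$ and one cube: for indices with $\varepsilon_{i+1}\geq t_0$ the annuli $\{\varepsilon_{i+1}<|x-y|\leq\varepsilon_i\}$ miss $3Q$ entirely, so those differences coincide with the ones for $f\chi_{3Q_0\setminus 3Q}$, and there is at most one crossing index), but the first gap is structural and is exactly what the paper's approximate-continuity argument exists to fill.
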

\begin{proof}
	For $x\in$ int $Q_0$, let $x$ be a point of approximate continuity of $\mathcal{V}_{\rho}(\mathcal{T}f\chi_{3Q_0})(x)$ (see \cite{EG}). For any $\varepsilon>0$, set
	$$E_{r}(x)=\{y\in B(x,r):|\mathcal{V}_{\rho}(\mathcal{T}f\chi_{3Q_0})(y)-\mathcal{V}_{\rho}
	(\mathcal{T}f\chi_{3Q_0})(x)|<\varepsilon\},$$ where $B(x,r)$ is the ball centered at $x$ of radius $r$. Then we immediately deduce that $\lim_{r\rightarrow0}|E_r(x)|/|B(x,r)|=1$. We denote the smallest cube centered at $x$ and containing $B(x,r)$ by $Q(x,r)$. Let $r>0$ be small enough so that $Q(x,r)\subset Q_0$. Hence, for a.e. $y\in E_r(x)$,
	\begin{equation*}
		\begin{split}
			\mathcal{V}_{\rho}(\mathcal{T}f\chi_{3Q_0})(x)&<
			\mathcal{V}_{\rho}(\mathcal{T}f\chi_{3Q_0})(y)+\varepsilon\\
			&\leq \mathcal{V}_{\rho}(\mathcal{T}f\chi_{3Q(x,r)})(y)+\mathcal{M}_{\mathcal{V}_
				{\rho}(\mathcal{T}),Q_0}f(x)+\varepsilon.
		\end{split}
	\end{equation*}
	Applying the $L^{q_0}$-boundedness of $\mathcal{V}_\rho(\mathcal{T})$, we have
	\begin{equation*}
		\begin{split}
			\mathcal{V}_{\rho}(\mathcal{T}f\chi_{3Q_0})(x)&\leq\Big(\frac{1}{|E_r(x)|}\int_{E_r(x)
			}\mathcal{V}_{\rho}(\mathcal{T}f\chi_{3Q(x,r)})(y)^{q_0} dy\Big)^{\frac{1}{q_0}}+
			\mathcal{M}_{\mathcal{V}_{\rho}(\mathcal{T}),Q_0}f(x)+\varepsilon\\
			&\leq\|\mathcal{V}_{\rho}(\mathcal{T})\|_{L^{q_0}\rightarrow L^{q_0}}\Big(\frac{1}{|E_r(x)|}\int_{3Q(x,r)}|f(y)|^{q_0}dy\Big)^{\frac{1}{q_0}}+
			\mathcal{M}_{\mathcal{V}_{\rho}(\mathcal{T}),Q_0}f(x)+\varepsilon.
		\end{split}
	\end{equation*}
	By additionally assuming that $x$ is a Lebesgue point of $f$, we get the results by letting $r,\varepsilon\rightarrow 0$.
\end{proof}

\begin{lemma}\label{lm 3.2}\,\
	Let $\mathcal{T}=\{T_\varepsilon\}_{\varepsilon>0}$ be given by $(\ref{(1.2)})$. Assume that $K(x,y)$ satisfies $(\ref{(1.4)})$ and $K\in \mathcal{H}_r$ for $1<r\leq\infty$. If $\mathcal{V}_{\rho}(\mathcal{T})$ is bounded on $L^{q_0}(\mathbb{R}^n)$ for some $1<q_0<\infty$, then
	\begin{align*}
		\|\mathcal{M}_{\mathcal{V}_{\rho}(\mathcal{T}),Q_0}\|_{L^{r'}\rightarrow L^{r',\infty}}&\leq C(n,r',q_0,\|\mathcal{V}_\rho(\mathcal{T})\|_{L^{q_0}\rightarrow L^{q_0}}),
	\end{align*}
	where $1/r+1/r'=1$.
\end{lemma}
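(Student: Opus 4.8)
The plan is to prove the weak-type $(r',r')$ bound for the local grand maximal truncated variation operator $\mathcal{M}_{\mathcal{V}_\rho(\mathcal{T}),Q_0}$ by a Calder\'on--Zygmund-type decomposition argument at a fixed height $\lambda$, estimating the bad part with the $L^r$-H\"{o}rmander condition and the good part (together with the local piece on the boundary cubes) with the assumed $L^{q_0}$-boundedness of $\mathcal{V}_\rho(\mathcal{T})$. Fix $f\in L^{r'}$, $\lambda>0$, and perform a Calder\'on--Zygmund decomposition of $|f|^{r'}$ (or of $|f|$ with respect to a suitable stopping condition) relative to the cube $Q_0$ at height roughly $\lambda^{r'}\langle |f|^{r'}\rangle_{Q_0}$, producing pairwise disjoint dyadic subcubes $\{Q_j\}\subset Q_0$ with $f = g + \sum_j b_j$, where $g$ is bounded and each $b_j$ is supported on $Q_j$. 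It suffices to estimate $|\{x\in Q_0: \mathcal{M}_{\mathcal{V}_\rho(\mathcal{T}),Q_0}f(x)>C\lambda\}|$; on the complement of $\bigcup_j 3Q_j$ one controls the bad part pointwise, and on $\bigcup_j 3Q_j$ one uses the trivial volume bound $\sum_j|3Q_j|\lesssim \lambda^{-r'}\|f\|_{r'}^{r'}$.

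First I would analyze the good part. For a generic cube $Q\ni x$ with $Q\subset Q_0$ and a.e.\ $\xi\in Q$, one writes $\mathcal{V}_\rho(\mathcal{T}g\chi_{3Q_0\setminus 3Q})(\xi)\le \mathcal{V}_\rho(\mathcal{T}g\chi_{3Q_0})(\xi) + \mathcal{V}_\rho(\mathcal{T}g\chi_{3Q})(\xi)$; since the variation operator is subadditive (via the $F_\rho$-valued formulation $\mathcal{V}_\rho(\mathcal{T}f)=\|V(\mathcal{T})f\|_{F_\rho}$ introduced above), the first term is handled by Chebyshev plus $\|\mathcal{V}_\rho(\mathcal{T})\|_{L^{q_0}\to L^{q_0}}$ and the $L^\infty\cap L^{r'}$ bound on $g$, and the second term — the genuinely local piece near $x$ — is where one must be careful because $\xi$ ranges over $Q$ and $\mathcal V_\rho(\mathcal T)$ is only known to be strong $(q_0,q_0)$. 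Here I would invoke the pointwise bound from Lemma~\ref{lm 3.1} in the localized form, or more directly estimate $\esss_{\xi\in Q}\mathcal V_\rho(\mathcal T g\chi_{3Q})(\xi)$ by $\langle |\mathcal V_\rho(\mathcal T g\chi_{3Q})|^{q_0}\rangle_{E}^{1/q_0}$ over a large-measure subset $E\subset Q$ of approximate-continuity points and then by $\|\mathcal V_\rho(\mathcal T)\|_{L^{q_0}\to L^{q_0}}\langle |g|^{q_0}\rangle_{3Q}^{1/q_0}\lesssim \|\mathcal V_\rho(\mathcal T)\|\,\lambda$, using that $\|g\|_\infty\lesssim\lambda$ (after the usual normalization dividing through by $\langle|f|^{r'}\rangle_{Q_0}^{1/r'}$); this contributes to the set of measure zero once the constant is taken large enough, or is absorbed by Chebyshev.

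Next I would treat the bad part, which is the heart of the matter and relies on $K\in\mathcal H_r$. For $x\in Q_0\setminus\bigcup_j 3Q_j$ and any cube $Q\ni x$, $Q\subset Q_0$, and a.e.\ $\xi\in Q$, I claim $\mathcal V_\rho(\mathcal T b_j\chi_{3Q_0\setminus 3Q})(\xi)\lesssim$ a harmless averaged quantity unless $Q_j\cap 3Q\ne\emptyset$; when $3Q$ does not meet $Q_j$ the function $b_j\chi_{3Q_0\setminus 3Q}=b_j$, and one uses the cancellation $\int b_j=0$ together with the variation analogue of the standard estimate. Concretely, for a single $\varepsilon$-difference $T_{\varepsilon_{i+1}}b_j-T_{\varepsilon_i}b_j$ evaluated at $\xi$ versus at the center $c_j$ of $Q_j$, the kernel difference $K(\xi,y)-K(\xi,c_j)$ (plus a few boundary terms coming from the truncation radii crossing $Q_j$) is integrated against $|b_j(y)|$; summing in $i$, taking the $\ell^\rho$-sup, and then summing over the dyadic annuli $2^k Q_j$ exactly produces the $\mathcal H_r$ sum, with the $L^r$-integral of the kernel difference paired by H\"{o}lder with $\langle |b_j|^{r'}\rangle_{2^kQ_j}^{1/r'}|2^kQ_j|$. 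The boundary truncation terms — where exactly one of the balls $\{|\xi-y|>\varepsilon_i\}$ separates points of $Q_j$ — are controlled by the size condition \eqref{(1.4)} and contribute a single extra $\langle|b_j|\rangle$-type term; here the hypothesis $\rho>2$ is used to sum the $\ell^\rho$ contribution of these "jump" terms via a Rubio-de-Francia/Lépingle-type bound, exactly as in \cite{CJRW1,CJRW2,GT}. Summing over $j$ with $x\notin 3Q_j$ and using $\|b_j\|_{L^{r'}}^{r'}\lesssim \lambda^{r'}|Q_j|$ gives $\sum_j\big(\text{tail}_j\big)(x)\lesssim \lambda\cdot\|K\|_{\mathcal H_r}\cdot M(\sum_j \chi_{Q_j})$-type control, and a final Chebyshev/weak-type argument (or the standard good-$\lambda$ packaging) closes the estimate.

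The main obstacle I anticipate is precisely the local piece $\mathcal V_\rho(\mathcal T b_j\chi_{3Q})$ and the boundary truncation terms: because $\mathcal V_\rho(\mathcal T)$ is only assumed bounded on $L^{q_0}$ for \emph{one} exponent $q_0$ (not on $L^{r'}$), one cannot directly test the local behavior of the variation operator on $b_j$ in $L^{r'}$; one must route through the approximate-continuity/essential-supremum device as in Lemma~\ref{lm 3.1} so that the $q_0$-boundedness can be applied on a cube where $b_j$ is still in $L^{q_0}$, and one must carefully isolate the finitely many truncation scales $\varepsilon_i$ that "see" the cube $Q_j$, bounding their $\ell^\rho$-contribution by the size condition rather than by cancellation. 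Handling these jump terms uniformly in the sequence $\{\varepsilon_i\}$ — so that the $F_\rho$-norm is genuinely controlled and not just each fixed truncation — is the delicate point, and is where the restriction $\rho>2$ enters decisively.
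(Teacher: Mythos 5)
Your plan---a Calder\'on--Zygmund decomposition at height $\lambda$, good part via the $L^{q_0}$ hypothesis, bad part via cancellation and $\mathcal H_r$---is the route the paper takes \emph{only} in the endpoint case $r=\infty$, $r'=1$. For $1<r<\infty$ the paper instead establishes a pointwise bound $\mathcal M_{\mathcal V_\rho(\mathcal T),Q_0}f(x)\le C\bigl(\mathcal V_\rho(\mathcal Tf)(x)+M_{r'}f(x)\bigr)$ by splitting, for $\xi\in Q\ni x$, the quantity $\mathcal V_\rho(\mathcal Tf\chi_{3Q_0\setminus 3Q})(\xi)$ into a near piece (size condition), the far piece evaluated at $x$, and the discrepancy $|\mathcal V_\rho(\mathcal Tf\chi_{3Q_0\setminus B_x})(\xi)-\mathcal V_\rho(\mathcal Tf\chi_{3Q_0\setminus B_x})(x)|$, the last being where $\mathcal H_r$ and the truncation-mismatch analysis enter; the weak $(r',r')$ bound then follows from the strong $(r',r')$ bound of $\mathcal V_\rho(\mathcal T)$ (from \cite{ZW} plus interpolation) and the weak $(r',r')$ bound of $M_{r'}$.

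The genuine gap in your proposal is the good part. You propose to control $\esss_{\xi\in Q}\mathcal V_\rho(\mathcal Tg\chi_{3Q_0})(\xi)$ ``by Chebyshev plus $\|\mathcal V_\rho(\mathcal T)\|_{L^{q_0}\to L^{q_0}}$''. Chebyshev bounds the level sets of the function $\xi\mapsto\mathcal V_\rho(\mathcal Tg\chi_{3Q_0})(\xi)$, but what you must bound is the level set of $x\mapsto\sup_{Q\ni x,\,Q\subset Q_0}\esss_{\xi\in Q}(\cdots)$; taking $Q=Q_0$ shows this supremum is comparable to $\|\mathcal V_\rho(\mathcal Tg\chi_{3Q_0})\|_{L^\infty(Q_0)}$, which no $L^{q_0}$ information controls. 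What is actually needed is $L^{p}$-boundedness of the grand maximal operator $\mathcal M_{\mathcal V_\rho(\mathcal T),Q_0}$ itself for some $p$, and the only available route to that is the pointwise $\xi$-versus-$x$ comparison above --- which is precisely the content of the lemma and cannot be bypassed by the CZ decomposition. (The paper's $r=\infty$ argument makes this explicit: it first derives $\|\mathcal M_{\mathcal V_\rho(\mathcal T),Q_0}\|_{L^{q_0}\to L^{q_0}}\le C$ from the pointwise estimate applied with some $r_0'<q_0$, and only then runs the CZ argument on the bad part.) Two further corrections: your appeal to a L\'epingle/Rubio de Francia-type bound and to $\rho>2$ for the truncation-mismatch terms is not the right mechanism (and no such variational inequality is available for general $\mathcal H_r$ kernels without circularity); in the paper these terms are handled elementarily by H\"older with an exponent $t<\min(r',2)\le\rho$ followed by the pairwise disjointness in $i$ of the annuli $\{\varepsilon_{i+1}<|\xi-y|\le\varepsilon_i\}$ and the embedding $\ell^{t}\hookrightarrow\ell^{\rho}$, after which the size condition yields $M_{r'}f(x)$. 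Also, decomposing $|f|^{r'}$ at height $\lambda^{r'}$ with $r'>1$ makes the good-part step via $L^{q_0}$ work only when $q_0>r'$ (to interpolate $\|g\|_{q_0}^{q_0}\le\|g\|_\infty^{q_0-r'}\|g\|_{r'}^{r'}$), so one must first reduce to that case by quoting the strong $(p,p)$, $p>r'$, bounds of $\mathcal V_\rho(\mathcal T)$.
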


\begin{proof}
	We first consider the case $1<r<\infty$. For any $x\in Q_0$, $Q\ni x$ with $Q\subset Q_0$ and $\xi\in Q$, we denote $B_x:=B(x,9n l_Q)$ and $\widetilde{B_x}:=B(x,3\sqrt{n}l_{Q_0})$. Then $3Q\subset B_x$ and $3Q_0\subset \widetilde{B_x}$. We have
	\begin{equation*}
		\begin{split}
			\mathcal{V}_{\rho}(\mathcal{T}f\chi_{3Q_0\backslash 3Q})(\xi)&\leq\mathcal{V}_{\rho}(\mathcal{T}f\chi_{(B_x\cap3Q_0)\backslash 3Q})(\xi)+\mathcal{V}_{\rho}(\mathcal{T}f\chi_{3Q_0\backslash B_x})(x)\\
			&\quad+|\mathcal{V}_
			{\rho}(\mathcal{T}f\chi_{3Q_0\backslash B_x})(\xi)-\mathcal{V}_
			{\rho}(\mathcal{T}f\chi_{3Q_0\backslash B_x})(x)|\\
			&=:I+II+III.
		\end{split}
	\end{equation*}
	Now we estimate $I$, $II$ and $III$, respectively. Note that $$\|\{\chi_{\{\varepsilon_{i+1}<|\xi-y|\leq\varepsilon_i\}}\}_{\beta=\{\varepsilon_i\}\in\Theta}\|
	_{F_\rho}\leq1.$$
	By the Minkowski inequality and $(\ref{(1.4)})$, one can see that
	\begin{equation}\label{eq 3.4}
		\begin{split}
			I&\leq\int_{\mathbb{R}^n}\|\{\chi_{\{\varepsilon_{i+1}<|\xi-y|\leq\varepsilon_i\}}\}
			_{\beta=\{\varepsilon_i\}\in\Theta}\|_{F_\rho}|f(y)\chi_{(B_x\cap3Q_0)\backslash 3Q}(y)||K(\xi,y)|dy\\
			&\leq\int_{(B_x\cap3Q_0)\backslash 3Q}\frac{|f(y)|}{|\xi-y|^n}dy\leq C_n M_{r'}f(x).
		\end{split}
	\end{equation}
	
	Similarly, by the definition and sublinearity of $\mathcal{V}_\rho(\mathcal{T})$, we have
	\begin{equation}\label{eq 3.5}
		II\leq2\mathcal{V}_\rho(\mathcal{T}f)(x)+\mathcal{V}_\rho(\mathcal{T}f\chi_{\widetilde{B_x}
			\backslash 3Q_0})(x)\leq2\mathcal{V}_\rho(\mathcal{T}f)(x)+C_n M_{r'}f(x).
	\end{equation}
	
	For the term $III$, we can write
	\begin{align*}
		&III\leq\Big\|\Big\{\int_{\varepsilon_{i+1}<|\xi-y|\leq\varepsilon_i}K(\xi,y)f(y)\chi_
		{3Q_0\backslash B_x}(y)dy\\
		&\quad\quad-\int_{\varepsilon_{i+1}<|x-y|\leq\varepsilon_i}K(x,y)f(y)
		\chi_{3Q_0\backslash B_x}(y)dy\Big\}_{\beta=\{\epsilon_i\}\in\Theta}\Big\|_{F_\rho}\\
		&\quad\leq\Big\|\Big\{\int_{\varepsilon_{i+1}<|\xi-y|\leq\varepsilon_i}(K(\xi,y)-K(x,y))f(y)
		\chi_{3Q_0\backslash B_x}(y)dy\Big\}_{\beta=\{\epsilon_i\}\in\Theta}\Big\|_{F_\rho}\\
		&\quad\quad+\Big\|\Big\{\int_{\mathbb{R}^n}(\chi_{\{\varepsilon_{i+1}<|\xi-y|\leq\varepsilon_i\}}
		(y)-\chi_{\{\varepsilon_{i+1}<|x-y|\leq\varepsilon_i\}}(y))K(x,y)f(y)
		\chi_{3Q_0\backslash B_x}(y)dy\Big\}\Big\|_{F_\rho}\\
		&\quad=:I_1+I_2.
	\end{align*}
	Since $\|\{\chi_{\{\varepsilon_{i+1}<|\xi-y|\leq\varepsilon_i\}}\}_{\beta=\{\varepsilon_i\}\in\Theta}\|
	_{F_\rho}\leq1$, by the Minkowski inequality and the H\"{o}rmander condition,
	\begin{align*}
		I_1&\leq\int_{\mathbb{R}^n}\|\{\chi_{\{\varepsilon_{i+1}<|\xi-y|\leq\varepsilon_i\}}\}
		_{\beta=\{\varepsilon_i\}\in\Theta}\|_{F_\rho}|f(y)\chi_{\mathbb{R}^n\backslash 3Q}(y)||K(\xi,y)-K(x,y)|dy\\
		&\leq\sum_{k=1}^{\infty}2^{kn}(3l_Q)^n\frac{1}{|2^k3Q|}\int_{2^k3Q\backslash 2^{k-1}3Q}|f(y)||K(\xi,y)-K(x,y)|dy\\
		&\leq\sum_{k=1}^{\infty}2^{kn}(3l_Q)^n\Big(\frac{1}{|2^k3Q|}\int_{2^k3Q\backslash 2^{k-1}3Q}|K(\xi,y)-K(x,y)|^r dy\Big)^{1/r}\\
		&\quad\Big(\frac{1}{|2^k3Q|}\int_{2^k3Q}|f(y)|^{r'}dy\Big)^{1/r'}\\
		&\leq C M_{r'}(f)(x).
	\end{align*}
	Next, we deal with the term $I_2$. As we can see, the integral
	$$\int_{\mathbb{R}^n}|\chi_{\{\varepsilon_{i+1}<|\xi-y|\leq\varepsilon_i\}}(y)-
	\chi_{\{\varepsilon_{i+1}<|x-y|\leq\varepsilon_i\}}(y)||K(x,y)f(y)|
	\chi_{3Q_0\backslash B_x}(y)dy$$ will be non-zero if either
	$$\chi_{\{\varepsilon_{i+1}<|\xi-y|\leq\varepsilon_i\}}(y)=1 \quad and \quad \chi_{\{\varepsilon_{i+1}<|x-y|\leq\varepsilon_i\}}(y)=0$$
	or viceversa. Therefore, we need to consider the following four cases:\\
	(i)\quad $\varepsilon_{i+1}<|\xi-y|\leq\varepsilon_i$ and $|x-y|\leq\varepsilon_{i+1}$,\\
	(ii)\quad $\varepsilon_{i+1}<|\xi-y|\leq\varepsilon_i$ and $|x-y|>\varepsilon_{i}$,\\
	(iii)\quad $\varepsilon_{i+1}<|x-y|\leq\varepsilon_i$ and $|\xi-y|\leq\varepsilon_{i+1}$,\\
	(iv)\quad $\varepsilon_{i+1}<|x-y|\leq\varepsilon_i$ and $|\xi-y|>\varepsilon_{i}$.\\
	In case (i) we have $\varepsilon_{i+1}<|\xi-y|\leq|x-\xi|+|x-y|\leq  \sqrt{n}l_Q+\varepsilon_{i+1}$. Other cases are similar, and we conclude that: in case (ii), $\varepsilon_i<|x-y|\leq \sqrt{n}l_Q+\varepsilon_i$; in case (iii), $\varepsilon_{i+1}<|x-y|\leq \sqrt{n}l_Q+\varepsilon_{i+1}$; in case (iv), $\varepsilon_{i}<|\xi-y|\leq \sqrt{n}l_Q+\varepsilon_{i}$.
	
	Therefore, using $|K(x,y)|\leq C/|x-y|^n$, we obtain
	\begin{align*}
		&\int_{\mathbb{R}^n}|\chi_{\{\varepsilon_{i+1}<|\xi-y|\leq\varepsilon_i\}}(y)-\chi_
		{\{\varepsilon_{i+1}<|x-y|\leq\varepsilon_i\}}(y)||K(x,y)||f(y)|\chi_{3Q_0\backslash B_x}(y)dy\\
		&\quad\leq C\int_{\mathbb{R}^n}\chi_{\{\varepsilon_{i+1}<|\xi-y|\leq\varepsilon_i\}}(y)
		\chi_{\{\varepsilon_{i+1}<|\xi-y|\leq\varepsilon_{i+1}+\sqrt{n}l_Q\}}(y)\frac{1}{|x-y|^n}
		|f(y)|\chi_{3Q_0\backslash B_x}(y)dy\\
		&\qquad+C\int_{\mathbb{R}^n}\chi_{\{\varepsilon_{i+1}<|\xi-y|\leq\varepsilon_i\}}(y)
		\chi_{\{\varepsilon_{i}<|x-y|\leq\varepsilon_{i}+\text {diam}Q\}}(y)\frac{1}{|x-y|^n}|
		f(y)|\chi_{3Q_0\backslash B_x}(y)dy\\
		&\qquad+C\int_{\mathbb{R}^n}\chi_{\{\varepsilon_{i+1}<|x-y|\leq\varepsilon_i\}}(y)
		\chi_{\{\varepsilon_{i+1}<|x-y|\leq\varepsilon_{i+1}+\sqrt{n}l_Q\}}(y)\frac{1}{|x-y|^n}
		|f(y)|\chi_{3Q_0\backslash B_x}(y)dy\\
		&\qquad+C\int_{\mathbb{R}^n}\chi_{\{\varepsilon_{i+1}<|x-y|\leq\varepsilon_i\}}(y)
		\chi_{\{\varepsilon_{i}<|\xi-y|\leq\varepsilon_{i}+\sqrt{n}l_Q\}}(y)\frac{1}
		{|x-y|^n}|f(y)|\chi_{3Q_0\backslash B_x}(y)dy\\
		&\quad=:I_{21}+I_{22}+I_{23}+I_{24}.
	\end{align*}
	Observe that $\sqrt{n}l_Q\geq \varepsilon_{i+1}$ implies $I_{21}=I_{23}=0$, and $I_{22}=I_{24}=0$ if $\sqrt{n}l_Q\geq\varepsilon_i$. Using $c_0|x-y|\leq|\xi-y|\leq c_1|x-y|$ with constants $c_0,c_1>0$, we may assume that $c_0<1$ and $c_1>1$, otherwise, $I_{22}=I_{24}=0$. For $1<t<\min(r',2)$,  by the H\"{o}lder inequality, we get
	\begin{equation*}
		\begin{split}
			I_{21}&\leq C\left(\int_{\mathbb{R}^n}\chi_{\{\varepsilon_{i+1}<
				|\xi-y|\leq\varepsilon_i\}}(y)|f(y)|^{t}\chi_{3Q_0\backslash B_x}(y)\frac{1}
			{|x-y|^{nt}}dy\right)^{1/t}\\
			&\quad\times[(\sqrt{n}l_Q+\varepsilon_{i+1})^n-(\varepsilon_{i+1})^n]^{1/t'},
		\end{split}
	\end{equation*}
	
	\begin{equation*}
		\begin{split}
			I_{22}&\leq C\left(\int_{\mathbb{R}^n}\chi_{\{\max\{\varepsilon_{i+1}, 2\varepsilon_i/3\}<
				|\xi-y|\leq\varepsilon_i\}}(y)|f(y)|^{t}\chi_{3Q_0\backslash B_x}(y)\frac{1}
			{|x-y|^{nt}}dy\right)^{1/t}\\
			&\quad\times[\sqrt{n}l_Q+\varepsilon_{i})^n-(\varepsilon_{i})^n]^{1/t'},
		\end{split}
	\end{equation*}
	
	\begin{equation*}
		\begin{split}
			I_{23}&\leq C\left(\int_{\mathbb{R}^n}\chi_{\{\varepsilon_{i+1}<
				|x-y|\leq\varepsilon_i\}}(y)|f(y)|^{t}\chi_{3Q_0\backslash B_x}(y)\frac{1}
			{|x-y|^{nt}}dy\right)^{1/t}\\
			&\quad\times[(\sqrt{n}l_Q+\varepsilon_{i+1})^n-(\varepsilon_{i+1})^n]^{1/t'},
		\end{split}
	\end{equation*}
	\begin{equation*}
		\begin{split}
			I_{24}&\leq C\left(\int_{\mathbb{R}^n}\chi_{\{\max\{\varepsilon_{i+1},3\varepsilon_i/4\}<
				|x-y|\leq\varepsilon_i\}}(y)|f(y)|^{t}\chi_{3Q_0\backslash B_x}(y)\frac{1}
			{|x-y|^{nt}}dy\right)^{1/t}\\
			&\quad\times[(\sqrt{n}l_Q+\varepsilon_{i})^n-(\varepsilon_{i})^n]^{1/t'}.
		\end{split}
	\end{equation*}
	
	Note that $\sqrt{n}l_Q<\varepsilon_{i+1}$, we have
	$$(\sqrt{n}l_Q+\varepsilon_{i+1})^n-\varepsilon_{i+1}^n\leq C(\max\{\sqrt{n}l_Q,\varepsilon_{i+1}\})^{n-1}\sqrt{n}l_Q\leq C\varepsilon_{i+1}^{n-1}\sqrt{n}l_Q.$$
	
	Then
	\begin{equation*}
		\begin{split}
			I_{21}&\leq C_{n,r'} \frac{[(\sqrt{n}l_Q+\varepsilon_{i+1})^n-(\varepsilon_{i+1})^n]^{1/t'}}{(\varepsilon_{i+1})
				^{(n-1)/t'}}\\
			&\quad\times\left(\int_{\mathbb{R}^n}\chi_{\{\varepsilon_{i+1}<|\xi-y|\leq\varepsilon_i\}}(y)
			\frac{|f(y)|^{t}\chi_{3Q_0\backslash B_x}(y)}{|x-y|^{n+t-1}}dy\right)^{1/t}\\
			&\leq C_{n,r'}(\sqrt{n}l_{Q})^{\frac{1}{t'}}
			\left(\int_{\mathbb{R}^n}\chi_{\{\varepsilon_{i+1}<|\xi-y|\leq\varepsilon_i\}}
			(y)\frac{|f(y)|^{t}\chi_{3Q_0\backslash B_x}(y)}{|x-y|^{n+t-1}}dy\right)^{1/t}.
		\end{split}
	\end{equation*}
	Similarly,
	$$I_{22}\leq C_{n,r'}(\sqrt{n}l_{Q})^{\frac{1}{t'}}
	\left(\int_{\mathbb{R}^n}\chi_{\{\varepsilon_{i+1}<|\xi-y|\leq\varepsilon_i\}}
	(y)\frac{|f(y)|^{t}\chi_{3Q_0\backslash B_x}(y)}{|x-y|^{n+t-1}}dy\right)^{1/t}.$$
	$$I_{23}\leq C_{n,r'} (\sqrt{n}l_
	{Q})^{\frac{1}{t'}}\left(\int_{\mathbb{R}^n}\chi_{\{\varepsilon_{i+1}<|x-y|\leq\varepsilon_i\}
	}(y)\frac{|f(y)|^{t}\chi_{3Q_0\backslash B_x}(y)}{|x-y|^{n+t-1}}dy\right)^{1/t}.$$
	$$I_{24}\leq C_{n,r'} (\sqrt{n}l_
	{Q})^{\frac{1}{t'}}\left(\int_{\mathbb{R}^n}\chi_{\{\varepsilon_{i+1}<|x-y|\leq\varepsilon_i\}
	}(y)\frac{|f(y)|^{t}\chi_{3Q_0\backslash B_x}(y)}{|x-y|^{n+t-1}}dy\right)^{1/t}.$$
	Consequently,
	\begin{equation*}
		\begin{split}
			I_{2}&\leq C_{n,r'} (\sqrt{n}l_
			{Q})^{\frac{1}{t'}}\Big\|\Big\{\Big(\int_{\mathbb{R}^n}\chi_{\{\varepsilon_{i+1}<|\xi-y|\leq
				\varepsilon_i\}}(y)\frac{|f(y)|^{t}\chi_{3Q_0\backslash B_x}(y)}{|x-y|^{n+t-1}}dy\Big)^{1/t}
			\Big\}_{\beta=\{\varepsilon_i\}\in\Theta}\Big\|_{F_\rho}\\
			&\quad+C_{n,r'} (\sqrt{n}l_
			{Q})^{\frac{1}{t'}}\Big\|\Big\{\Big(\int_{\mathbb{R}^n}\chi_{\{\varepsilon_{i+1}
				<|x-y|<\varepsilon_i\}}(y)\frac{|f(y)|^{t}\chi_{\mathbb{R}^n\backslash 3Q}(y)}{|x-y|^{n+t-1}}dy\Big)
			^{1/t}\Big\}_{\beta=\{\varepsilon_i\}\in\Theta}\Big\|_{F_\rho}\\
			&=:D_{1}+D_{2}.
		\end{split}
	\end{equation*}
	A direct computation shows that
	\begin{equation*}
		\begin{split}
			&\Big\|\Big\{\Big(\int_{\mathbb{R}^n}\chi_{\{\varepsilon_{i+1}<|\xi-y|\leq\varepsilon_i\}}
			(y)\frac{\chi_{3Q_0\backslash B_x}(y)
			}{|x-y|^{n+t-1}}|f(y)|^{t} dy\Big)^{1/t}\Big\}_{\beta=\{\varepsilon_i\}
				\in\Theta}\Big\|_{F_\rho}\\
			&\quad= \sup_{\varepsilon_i}\Big[\sum_i\Big(\int_{\mathbb{R}^n}\chi_{\{\varepsilon_{i+1}<|\xi-y|
				\leq\varepsilon_i\}}(y)\frac{\chi_{3Q_0\backslash B_x}(y)}{|x-y|^{n+t-1}}|f(y)|^{t} dy\Big)^{\rho/t}\Big]^{1/\rho}\\
			&\quad\leq \Big(\int_{3Q_0\backslash B_x}
			\frac{|f(y)|^{t}}{|x-y|^{n+t-1}}dy\Big)^{1/t}\leq C_{n,r'}(\sqrt{n}l_Q)^{-\frac{1}{t'}}M_{r'}(f)(x).
		\end{split}
	\end{equation*}
	This implies that $D_{1}\leq C_{n,r'}M_{r'}(f)(x)$.
	
	Similarly, we can also deduce that $D_{2}\leq C_{n,r'}M_{r'}(f)(x).$
	
	Hence, $I_{2}\leq C_{n,r'} M_{r'}(f)(x),$ which, together with the estimates of $I_{1}$, implies that
	\begin{equation}\label{eq 3.6}
		III=|\mathcal{V}_{\rho}(\mathcal{T}f\chi_{3Q_0\backslash B_x})(\xi)
		-\mathcal{V}_{\rho}(\mathcal{T}f\chi_{3Q_0\backslash B_x})(x)|\leq C_{n,r'}M_{r'}f(x).
	\end{equation}
	It was proved in \cite{ZW} that $\mathcal{V}_\rho(\mathcal{T})$ are strong type $(p,p)$ with $p>r'$ and weak type $(1,1)$, then using the interpolation theorem, we have
	$$\|\mathcal{V}_\rho(\mathcal{T})\|_{L^{r'}\rightarrow L^{r'}}\leq C(n,r',q_0,\|\mathcal{V}_\rho(\mathcal{T})\|_{L^{q_0}\rightarrow L^{q_0}}).$$
	Then by $(\ref{eq 3.4})$-$(\ref{eq 3.6})$, we get the desired result for the case of $1<r<\infty$.
	
	Now we turn to the case of $r=\infty$ by employing the idea in \cite{DuLY}. For any $f\in L^1(\mathbb{R}^n)$ and $\lambda>0$, applying the Calder\'{o}n-Zygmund decomposition to $f$ at height $\lambda$. We obtain $f=g+b$ such that\\
	($P_1$)\:$|g(x)|\leq2^n\lambda$ for a.e. $x\in\mathbb{R}^n$ and $\|g\|_{L^1(\mathbb{R}^n)}\leq\|f\|_{L^1(\mathbb{R}^n)}$;\\
	($P_2$)\:$b=\sum_jb_j$, $\text {supp}~ (b_j)\subset Q_j$ and $\{Q_j\}\subset\mathcal{D}(\mathbb{R}^n)$ is a pairwise disjoint family of cubes, where $\mathcal{D}(\mathbb{R}^n)$ is the family of dyadic cubes in $\mathbb{R}^n$;\\
	($P_3$)\:for every $j$, $\int_{\mathbb{R}^n}b_j(x)dx=0$ and $\|b_j\|_{L^1(\mathbb{R}^n)}\leq2^{n+1}\lambda|Q_j|$;\\
	($P_4$)\:$\sum_j|Q_j|\leq\|f\|_{L^1(\mathbb{R}^n)}/\lambda$.\\
	By the sublinearity of $\mathcal{M}_{\mathcal{V}_\rho(\mathcal{T}),Q_0}$, we have
	\begin{align*}
		&|\{x\in\mathbb{R}^n:\mathcal{M}_{\mathcal{V}_\rho(\mathcal{T}),Q_0}f(x)>\lambda\}|\\
		&\quad\leq|\{x\in\mathbb{R}^n:\mathcal{M}_{\mathcal{V}_\rho(\mathcal{T}),Q_0}g(x)>\lambda/2\}|+
		|\{x\in\mathbb{R}^n:\mathcal{M}_{\mathcal{V}_\rho(\mathcal{T}),Q_0}b(x)>\lambda/2\}|.
	\end{align*}
	Since $K\in\mathcal{H}_\infty$, for some $r_0\in(1,q_0)$, we know that $K\in\mathcal{H}_{r_{0}'}$, hence, we can apply $(\ref{eq 3.4})$-$(\ref{eq 3.6})$ to get that
	$$\|\mathcal{M}_{\mathcal{V}_\rho(\mathcal{T}),Q_0}\|_{L^{q_0}\rightarrow L^{q_0}}\leq C(n,q_0,\|\mathcal{V}_\rho(\mathcal{T})\|_{L^{q_0}\rightarrow L^{q_0}}).$$
	This together with $(P_1)$, we deduce that
	$$|\{x\in\mathbb{R}^n:\mathcal{M}_{\mathcal{V}_\rho(\mathcal{T}),Q_0}g(x)>\lambda/2\}|\leq
	C(n,q_0,\|\mathcal{V}_\rho(\mathcal{T})\|_{L^{q_0}\rightarrow L^{q_0}})\frac{1}{\lambda}\|f\|_{L^1(\mathbb{R}^n)}.$$
	Denote $\widetilde{Q}:=\bigcup_j25\sqrt{n}Q_j$, by $(P_4)$, we see that
	\begin{align*}
		&|\{x\in\mathbb{R}^n:\mathcal{M}_{\mathcal{V}_\rho(\mathcal{T}),Q_0}b(x)>\lambda/2\}|\\
		&\quad\leq|\widetilde{Q}|+|\{x\in\mathbb{R}^n\backslash\widetilde{Q}:\mathcal{M}_{\mathcal{V}_\rho
			(\mathcal{T}),Q_0}b(x)>\lambda/2\}|\\
		&\quad\leq C_n\frac{1}{\lambda}\|f\|_{L^1(\mathbb{R}^n)}+|\{x\in\mathbb{R}^n\backslash\widetilde{Q}:
		\mathcal{M}_{\mathcal{V}_\rho(\mathcal{T}),Q_0}b(x)>\lambda/2\}|.
	\end{align*}
	Note that for any $x\in \mathbb{R}^n$,
	$$\mathcal{M}_{\mathcal{V}_\rho(\mathcal{T}),Q_0}b(x)\leq CMb(x)+\mathcal{\widetilde{M}}_{\mathcal{V}_\rho(\mathcal{T}),Q_0}b(x),$$
	and
	\begin{equation*}
		\mathcal{\widetilde{M}}_{\mathcal{V}_\rho(\mathcal{T}),Q_0}f(x):=
		\begin{cases}
			\sup_{Q\ni x,Q\subset Q_0}\esss_{\xi\in Q}|\mathcal{V}_{\rho}(\mathcal{T}f\chi_{3Q_0\backslash B_x})(\xi)|, ~x\in Q_0;\\
			0,~$otherwise$.\\
		\end{cases}
	\end{equation*}
	Hence, we only need to show the weak type (1,1) of $\mathcal{\widetilde{M}}_{\mathcal{V}_\rho(\mathcal{T}),Q_0}$. Let $I_i:=(\varepsilon_i,\varepsilon_{i+1}]$ and $A_{I_i}(\xi):=\{y\in\mathbb{R}^n:|\xi-y|\in I_i\}$. For $x\in Q_0\backslash \widetilde{Q}$, choose $Q\ni x, \xi\in Q$ and $\{\varepsilon_i\}_i$ such that
	$$\mathcal{\widetilde{M}}_{\mathcal{V}_\rho(\mathcal{T}),Q_0}b(x)\leq2\Big[\sum_i\Big|\sum_j
	T(\chi_{A_{I_i}(\xi)}\chi_{3Q_0\backslash B_x}b_j)(\xi)\Big|^\rho\Big]^{1/\rho}.$$
	Let us consider the following three sets of indices $j's$:
	$$L_{I_i}^1(\xi):=\{j:Q_j\subset A_{I_i}(\xi)\cap(3Q_0\backslash B_x)\},$$
	$$L_{I_i}^2(\xi):=\{j:Q_j\nsubseteq A_{I_i}(\xi)\cap(3Q_0\backslash B_x),Q_j\cap A_{I_i}(\xi)\cap(3Q_0\backslash B_x)\neq\emptyset,Q_j\cap\partial(3Q_0)\neq\emptyset\},$$
	\begin{align*}
		L_{I_i}^3(\xi)&:=\{j:Q_j\nsubseteq A_{I_i}(\xi)\cap(3Q_0\backslash B_x),Q_j\cap A_{I_i}(\xi)\cap(3Q_0\backslash B_x)\neq\emptyset,\\
		&\qquad\quad Q_j\cap\partial(B_x)\neq\emptyset~or~Q_j\cap\partial(A_{I_i}(\xi)\neq\emptyset\}.
	\end{align*}
	Then we get
	\begin{align*}
		\Big[\sum_i\Big|\sum_j
		T(\chi_{A_{I_i}(\xi)}\chi_{3Q_0\backslash B_x}b_j)(\xi)\Big|^\rho\Big]^{1/\rho}&\leq\sum_{m=1}^{3}\Big[\sum_i\Big|\sum_{j\in L_{I_i}^{m}(\xi)}
		T(\chi_{A_{I_i}(\xi)}\chi_{3Q_0\backslash B_x}b_j)(\xi)\Big|^\rho\Big]^{1/\rho}\\
		&=:\sum_{m=1}^{3}L_m(\xi)(x).
	\end{align*}
	By $(P_3)$, it implies that
	\begin{align*}
		L_1(\xi)(x)&\leq\sum_i\sum_{j\in L_{I_i}^{1}(\xi)}\int_{\mathbb{R}^n}|K(\xi,y)-K(\xi,y_j)|\chi_{A_{I_i}(\xi)}(y)\chi_{3Q_0
			\backslash B_x}(y)|b_j(y)|dy\\
		&\leq\sum_j\int_{\mathbb{R}^n}|K(\xi,y)-K(\xi,y_j)|\chi_{3Q_0
			\backslash B_x}(y)|b_j(y)|dy,
	\end{align*}
	where $y_j$ is the center of $Q_j$. Note that for $x\in Q_0\backslash\widetilde{Q}$ and $y\in Q_j\bigcap(3Q_0\backslash B_x)$,
	\begin{align*}
		|x-y|\geq|x-y_j|-\sqrt{n}l_{Q_j}\geq\frac{23}{25}|x-y_j|,
	\end{align*}
	and
	\begin{align*}
		|\xi-y|\geq|x-y|-|x-\xi|\geq|x-y|-\frac{|x-y|}{9\sqrt{n}}\geq\frac{8}{9}|x-y|.
	\end{align*}
	Then
	\begin{align*}
		|\xi-y_j|&\geq|\xi-y|-|y-y_j|\\
		&\geq\frac{8}{9}|x-y|-\sqrt{n}l_{Q_j}\geq\frac{8}{9}\cdot
		\frac{23}{25}|x-y_j|-\frac{2}{25}|x-y_j|\geq7\sqrt{n}l_{Q_j}.
	\end{align*}
	Since $K\in \mathcal{H}_\infty$, by $(P_3)$ and $(P_4)$, we have
	\begin{align*}
		&|\{x\in Q_0\backslash\widetilde{Q}:L_1(\xi)(x)>\frac{\lambda}{16}\}|\\
		&\quad\leq\frac{16}{\lambda}\sum_j\int_{Q_0\backslash\hat{B}_j}\int_{\mathbb{R}^n}
		|K(\xi,y)-K(\xi,y_j)|\chi_{3Q_0\backslash B_x}(y)|b_j(y)|dydx\\
		&\quad\leq C\frac{1}{\lambda}\sum_j\int_{\mathbb{R}^n}|b_j(y)|\sum_{k=1}^{\infty}\int_{2^k\hat{B}_j
			\backslash2^{k-1}\hat{B}_j}|K(\xi,y)-K(\xi,y_j)|\chi_{3Q_0\backslash B_x}(y)dxdy\\
		&\quad\leq C_n\frac{1}{\lambda}\sum_j\int_{\mathbb{R}^n}|b_j(y)|\sup_{\hat{B}_j}\sup_{y,y_j\in\frac{1}{2}
			\hat{B}_j}\sum_{k=1}^{\infty}|2^{k}\hat{B}_j|\esss_{\xi\in2^{k}\hat{B}_j\backslash2^{k-1}
			\hat{B}_j}|K(\xi,y)-K(\xi,y_j)|dy\\
		&\quad\leq C_n\frac{1}{\lambda}\|f\|_{L^1(\mathbb{R}^n)},
	\end{align*}
	where $\hat{B}_j:=B(y_j,7\sqrt{n}l_{Q_j})$. By the same arguments as in \cite{DuLY}, we can obtain
	$$|\{x\in Q_0\backslash\widetilde{Q}:L_m(\xi)(x)>\frac{\lambda}{16}\}|\leq C_n\frac{1}{\lambda}\|f\|_{L^1(\mathbb{R}^n)}, \quad m=2,3.$$
	This implies the desired conclusion and completes the proof of Lemma $\ref{lm 3.2}$.
\end{proof}

Now we are in a position to prove Theorem $\ref{Theorem1.1}$.
\begin{proof}[Proof of Theorem $\ref{Theorem1.1}$]
	The proof of this theorem follows the standard step in \cite{LOR1}. We only prove $(\ref{(1.6)})$ since $(\ref{(1.5)})$ follows the similar pattern.
	In view of Remark \ref{Remark 2.4}, we see that there exist $3^n$ dyadic lattices $\mathcal{D}_j$ such that for every cube $Q\subset\mathbb{R}^n$, there is a cube $R_Q\in \mathcal{D}_j$ for some $j$, for which $3Q\subset R_Q$ and $|R_Q|\leq 9^n|Q|$.
	
	Fixed a cube $Q_0\subset \mathbb{R}^n$, we first claim that there exists a $\frac{1}{2}$-sparse family $\mathcal{F}\subseteq \mathcal{D}(Q_0)$ such that for a.e. $x\in Q_0$,
	\begin{equation}\label{eq 3.7}
		\begin{split}
			\mathcal{V}_\rho(\mathcal{T}_bf\chi_{3Q_0})(x) &\leq C(n,r',q_0,\|\mathcal{V}_\rho(\mathcal{T})\|_{L^{q_0}\rightarrow L^{q_0}})\\
			&\quad\times\sum_{Q\in\mathcal{F}}(|b(x)-b_{R_Q}|\langle|f|^{r'}\rangle_{3Q}
			^{1/r'}+\langle|f(b-b_{R_Q})|^{r'}\rangle_{3Q}^{1/r'})\chi_{Q}(x).
		\end{split}
	\end{equation}
	For some $\alpha_n>0$, which will be determined later, denote $E=E_1\cup E_2$, where
	\begin{align*}
		E_1:&=\{x\in Q_0: |f(x)|>\alpha_n\langle|f|^{r'}\rangle_{3Q_0}^{1/r'}\}\\
		&\quad\cup\{x\in Q_0: \mathcal{M}_{\mathcal{V}_{\rho}(\mathcal{T}),Q_0}(f)(x)>\alpha_n
		\|\mathcal{M}_{\mathcal{V}_{\rho}(\mathcal{T}),Q_0}\|_{L^{r',\infty}\rightarrow L^{r'}})
		\langle|f|^{r'}\rangle_{3Q_0}^{1/r'}\},
	\end{align*}
	and
	\begin{align*}
		E_2:&=\{x\in Q_0: |f(x)(b(x)-b_{R_{Q_0}})|>\alpha_n\langle|f(b-b_{R_{Q_0}})|^{r'}\rangle_{3Q_0}^{1/r'}\}\\
		&\quad\cup\{x\in Q_0: \mathcal{M}_{\mathcal{V}_{\rho}(\mathcal{T}),Q_0}(f(b-b_{R_{Q_0}}))(x)>\alpha_n
		\|\mathcal{M}_{\mathcal{V}_{\rho}(\mathcal{T}),Q_0}\|_{L^{r',\infty}\rightarrow L^{r'}})\\
		&\quad\quad\times\langle|f(b-b_{R_{Q_0}})|^{r'}\rangle_{3Q_0}^{1/r'}\}.
	\end{align*}
	By Lemma $\ref{lm 3.2}$,
	\begin{equation*}
		\begin{split}
			|E|&\leq\frac{\int_{Q_0}|f(x)|dx}{\alpha_n\langle|f|^{r'}\rangle_{3Q_0}^{1/r'}}+
			\frac{\int_{Q_0}|f(x)(b(x)-b_{R_{Q_0}})|dx}{\alpha_n\langle|f(b-b_{R_{Q_0}})|^{r'}
				\rangle_{3Q_0}^{1/r'}}+\int_{3Q_0}\frac{|f(x)(b(x)-b_{R_{Q_0}})|^{r'}}{\alpha_{n}^{r'}
				\langle|f(b-b_{R_{Q_0}})|^{r'}\rangle_{3Q_0}}dx\\
			&\quad+\int_{3Q_0}\frac{|f(x)|^{r'}}{\alpha_{n}^{r'}
				\langle|f|^{r'}\rangle_{3Q_0}}dx\\
			&\leq(\frac{2\cdot3^n}{\alpha_n}+\frac{2\cdot3^n}{\alpha_n^{r'}})|Q_0|.
		\end{split}
	\end{equation*}
	Hence, taking $\alpha_n$ large enough, we deduce that
	$$|E|\leq\frac{1}{2^{n+1}}|Q_0|.$$
	By using the Calder\'{o}n-Zygmund decomposition to the function $\chi_E$ on $Q_0$ at height $\lambda=\frac{1}{2^{n+1}}$, we obtain pairwise disjoint cubes $P_j$ such that
	$$\chi_E(x)\leq \frac{1}{2^{n+1}}$$
	for a.e. $x\not\in\cup P_j$. Together with this we immediately obtain $\Big|E\backslash\bigcup_j P_j\Big|=0$, we also have
	$$\frac{1}{2^{n+1}}|P_j|\leq|P_j\cap E|\leq\frac{1}{2}|P_j|,$$
	and that $\sum_{j}|P_j|\leq1/2|Q_0|,~~P_j\cap E^{c}\neq\emptyset$.
	As the direct results of Lemma $\ref{lm 3.2}$, we have
	\begin{equation}\label{eq 3.8}
		\esss_{\xi\in P_j}\mathcal{V}_{\rho}(\mathcal{T}f\chi_{3Q_0\backslash3P_j})(\xi)\leq
		C(n,r',q_0,\|\mathcal{V}_\rho(\mathcal{T})\|_{L^{q_0}\rightarrow L^{q_0}})\langle|f|^{r'}\rangle_{3Q_0}^{1/r'},
	\end{equation}
	and
	\begin{equation}\label{eq 3.9}\begin{array}{ll}
			&\esss_{\xi\in P_j}\mathcal{V}_{\rho}(\mathcal{T}(b-b_{R_{Q_0}})f\chi_{3Q_0\backslash3P_j})(\xi)\\
			&\qquad\qquad\qquad\leq
			C(n,r',q_0,\|\mathcal{V}_\rho(\mathcal{T})\|_{L^{q_0}\rightarrow L^{q_0}})\langle|f(b-b_{R_{Q_0}})|^{r'}\rangle_{3Q_0}^{1/r'}.\end{array}
	\end{equation}
	By Lemmas $\ref{lm 3.1}$ and $\ref{lm 3.2}$, for almost everywhere $x\in Q_0\backslash\cup P_j$,
	\begin{equation}\label{eq 3.10}
		\mathcal{V}_{\rho}(\mathcal{T}f\chi_{3Q_0})(x)\leq C(n,r',q_0,\|\mathcal{V}_\rho(\mathcal{T})\|_{L^{q_0}\rightarrow L^{q_0}})
		\langle|f|^{r'}\rangle_{3Q_0}^{1/r'},
	\end{equation}
	and
	\begin{equation}\label{eq 3.11}
		\begin{split}
			\mathcal{V}_{\rho}(\mathcal{T}(b-b_{R_{Q_0}})f\chi_{3Q_0})(x)&\leq
			C(n,r',q_0,\|\mathcal{V}_\rho(\mathcal{T})\|_{L^{q_0}\rightarrow L^{q_0}})\langle|f(b-b_{R_{Q_0}})|^{r'}\rangle_{3Q_0}^{1/r'}.
		\end{split}
	\end{equation}
	Note that for any pairwise disjoint cubes $P_j$ we have obtained,
	\begin{equation*}
		\begin{split}
			\mathcal{V}_{\rho}(\mathcal{T}_bf\chi_{3Q_0})(x)\chi_{Q_0}(x)&=\mathcal{V}_{\rho}(\mathcal{T}_b
			f\chi_{3Q_0})(x)\chi_{Q_0\backslash\bigcup_{j}P_j}(x)+\sum_{j}\mathcal{V}_{\rho}
			(\mathcal{T}_bf\chi_{3Q_0})(x)\chi_{P_j}(x)\\
			&\leq\mathcal{V}_{\rho}(\mathcal{T}_bf\chi_{3Q_0})(x)\chi_{Q_0\backslash\bigcup_{j}P_j}(x)
			+\sum_{j}\mathcal{V}_{\rho}(\mathcal{T}_bf\chi_{3P_j})(x)\chi_{P_j}(x)\\
			&\quad+\sum_{j}\mathcal{V}_{\rho}(\mathcal{T}_bf\chi_{3Q_0\backslash3P_j})(x)\chi_{P_j}(x).
		\end{split}
	\end{equation*}
	For $c=b-b_{R_{Q_0}}$, using the fact that $\mathcal{V}_{\rho}(\mathcal{T}_bf)=\mathcal{V}_{\rho}(\mathcal{T}_{b-c}f)$, we have
	\begin{equation*}
		\begin{split}
			&\mathcal{V}_{\rho}(\mathcal{T}_bf\chi_{3Q_0})(x)\chi_{Q_0\backslash\bigcup_{j}P_j}(x)+
			\sum_{j}\mathcal{V}_{\rho}(\mathcal{T}_bf\chi_{3Q_0\backslash3P_j})(x)\chi_{P_j}(x)\\
			&\quad\leq|b(x)-b_{R_{Q_0}}|\Big(\mathcal{V}_{\rho}(\mathcal{T}f\chi_{3Q_0})(x)\chi_{Q_0\backslash
				\bigcup_{j}P_j}(x)+\sum_{j}\mathcal{V}_{\rho}(\mathcal{T}f\chi_{3Q_0\backslash3P_j})(x)
			\chi_{P_j}(x)\Big)\\
			&\quad\quad+\mathcal{V}_{\rho}(\mathcal{T}(b-b_{R_{Q_0}})f\chi_{3Q_0})(x)\chi_{Q_0\backslash
				\bigcup_{j}P_j}(x)\\
			&\quad\quad+\sum_{j}\mathcal{V}_{\rho}(\mathcal{T}(b-b_{R_{Q_0}})f
			\chi_{3Q_0\backslash3P_j})(x)\chi_{P_j}(x).
		\end{split}
	\end{equation*}
	By $(\ref{eq 3.8})$-$(\ref{eq 3.11})$, for a.e. $x\in Q_0$, we get
	\begin{equation*}
		\begin{split}
			\mathcal{V}_{\rho}(\mathcal{T}_bf\chi_{3Q_0})(x)\chi_{Q_0}(x)&\leq C(n,r',q_0,\|\mathcal{V}_\rho(\mathcal{T})\|_{L^{q_0}\rightarrow L^{q_0}}) \Big(|b(x)-b_{R_{Q_0}}|\langle|f|^{r'}\rangle_{3Q_0}^{1/r'}\\
			&\quad+\langle|(b-b_{R_{Q_0}})f|^{r'}\rangle_{3Q_0}^{1/r'}\Big)
			+\sum_{j}\mathcal{V}_{\rho}(\mathcal{T}_bf\chi_{3P_j})(x)\chi_{P_j}(x).
		\end{split}
	\end{equation*}
	Iterating the above inequality, we obtain a $\frac{1}{2}$-sparse family $\mathcal{F}=\{P_{j}^{k}\}(k\in \mathbb{Z}^+)$ with $\sum_{j}|P_j|\leq\frac{1}{2}|Q_0|$, where $\{P_{j}^{0}\}=\{Q_0\}$, $\{P_{j}^{1}\}=\{P_j\}$ and $\{P_{j}^{k}\}$ are the cubes achieved at the $k$-stage of the iterative process. This proves $(\ref{eq 3.7})$.
	
	In the end of the proof, let us show that $\text {supp}~ f\subset 3Q_j$ with $\bigcup_{j}Q_j=\mathbb{R}^n$. We begin by taking a cube $Q_0$ such that $\text {supp}~ f\subset Q_0$. And cover $3Q_0\backslash Q_0$ by $3^n-1$ congruent cubes $Q_j$. For every $j$, $Q_0\subset 3Q_j$. We continue to do the same way for $9Q_0\backslash 3Q_0$ and so on. It's easy to check that the union of the cubes $Q_j$ of this process, including $Q_0$, satisfies our requirement. Applying to each $Q_j$, then for a.e. $x\in Q_j$,
	\begin{align*}
		\mathcal{V}_{\rho}(\mathcal{T}_bf)(x)\chi_{Q_j}(x)&\leq
		C(n,r',q_0,\|\mathcal{V}_\rho(\mathcal{T})\|_{L^{q_0}\rightarrow L^{q_0}}) \sum_{Q\in\mathcal{F}_j}\Big(|b(x)-b_{R_Q}|\langle|f|^{r'}\rangle_{3Q}^{1/r'}\\
		&\quad+\langle|f(b-b_{R_Q})|^{r'}\rangle_{3Q}^{1/r'}\Big)\chi_Q(x),
	\end{align*}
	where $\mathcal{F}_j\subset \mathcal{D}(Q_j)$ is a $\frac{1}{2}$-sparse family. Let $\mathcal{F}=\bigcup_{j}\mathcal{F}_j$, we also have that $\mathcal{F}$ is a $\frac{1}{2}$-sparse family and the following estimate holds for a.e. $x\in \mathbb{R}^n$,
	\begin{align*}
		\mathcal{V}_{\rho}(\mathcal{T}_bf)(x)&\leq
		C(n,r',q_0,\|\mathcal{V}_\rho(\mathcal{T})\|_{L^{q_0}\rightarrow L^{q_0}}) \sum_{Q\in\mathcal{F}}\Big(|b(x)-b_{R_Q}|\langle|f|^{r'}\rangle_{3Q}^{1/r'}\\
		&\quad+\langle|f(b-b_{R_Q})|\rangle_{3Q}^{1/r'}\Big)\chi_Q(x).
	\end{align*}
	Denote $S_j=\{R_Q\in \mathcal{D}_j:Q\in \mathcal{F}\}$ and note that $|f|_{3Q}\leq C_n|f|_{3R_Q}$, it yields that
	\begin{align*}
		\mathcal{V}_{\rho}(\mathcal{T}_bf)(x)&\leq C(n,r',q_0,\|\mathcal{V}_\rho(\mathcal{T})\|_{L^{q_0}\rightarrow L^{q_0}})\sum_{j=1}^{3^n}\sum_{R\in S_j}\Big(|b(x)-b_{R}|\langle|f|^{r'}\rangle_{R}^{1/r'}\\
		&\quad+\langle|f(b-b_{R})|^{r'}\rangle_{R}^{1/r'}\Big)\chi_R(x).
	\end{align*}
	This completes the proof of Theorem $\ref{Theorem1.1}$.
\end{proof}

\begin{proof}[Proof of Theorem $\ref{Theorem1.2}$]
	The proof directly follows from $(\ref{(1.5)})$ and $(\ref{eq 2.1})$-$(\ref{eq 2.2})$.
\end{proof}

\section{Proofs of the Theorems \ref{Theorem1.4} and \ref{Theorem1.5}}\label{section 4}
This section is devoted to the proofs of Theorems $\ref{Theorem1.4}$ and $\ref{Theorem1.5}$. To prove $\ref{Theorem1.4}$, we need the following lemma.
\begin{lemma}\label{new}\,\
	If $\omega\in A_{p/r'}$ with $1\leq r'<\infty$, then there exists a constant $s>1$ with $s'\leq c_{n,p}[\omega]_{A_\infty}$ such that $[\omega]_{A_{p/(r's)}}\lesssim[\omega]_{A_{p/r'}}$.
\end{lemma}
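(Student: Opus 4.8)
The plan is to exploit the classical self-improving (openness) property of Muckenhoff\-er classes, but to track the quantitative dependence of the exponent on $[\omega]_{A_\infty}$ through the sharp reverse H\"older inequality of Lemma \ref{lm 2.1}. Write $q:=p/r'>1$, so that $\omega\in A_q$ and $\sigma_\omega:=\omega^{1-q'}$ is the dual weight, which lies in $A_{q'}$ with $[\sigma_\omega]_{A_{q'}}=[\omega]_{A_q}^{q'-1}$; recall moreover that $[\sigma_\omega]_{A_\infty}\lesssim_{n}[\omega]_{A_q}^{\,?}$ is \emph{not} needed, since a cleaner route is available: we only need to raise a single power of $\omega$ (or of $\sigma_\omega$) inside the $A_q$ ratio. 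The target is to produce $s>1$ with $s'\le c_{n,p}[\omega]_{A_\infty}$ and $[\omega]_{A_{q/s}}\lesssim[\omega]_{A_q}$, where I read $A_{q/s}$ as the smaller class obtained by shrinking the first exponent; equivalently $\omega\in A_{q/s}$ with comparable constant.

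First I would apply Lemma \ref{lm 2.1} to the weight $\sigma_\omega\in A_\infty$: there is $r_{\sigma_\omega}=1+1/(\tau_n[\sigma_\omega]_{A_\infty})$ with
$$\Big(\frac{1}{|Q|}\int_Q \sigma_\omega^{\,r_{\sigma_\omega}}\Big)^{1/r_{\sigma_\omega}}\le \frac{2}{|Q|}\int_Q\sigma_\omega.$$
Set $s$ by $s':= r_{\sigma_\omega}$ — or, to get the stated bound in terms of $[\omega]_{A_\infty}$ rather than $[\sigma_\omega]_{A_\infty}$, apply Lemma \ref{lm 2.1} directly to $\omega$ itself and choose $s$ so that $s'$ equals the resulting reverse-H\"older exponent, giving $s'=1+1/(\tau_n[\omega]_{A_\infty})$, hence $s'\le c_{n,p}[\omega]_{A_\infty}$ once $[\omega]_{A_\infty}\ge 1$. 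Then I would expand the $A_{q/s}$ characteristic: for any cube $Q$,
$$\Big(\frac{1}{|Q|}\int_Q\omega\Big)\Big(\frac{1}{|Q|}\int_Q\omega^{-\frac{1}{q/s-1}}\Big)^{q/s-1}.$$
The first factor is untouched; for the second factor the point is that $\frac{1}{q/s-1}\le \frac{s}{s-1}\cdot\frac{1}{q-1}=s'\cdot\frac{1}{q-1}$ is \emph{not} literally what appears, so one instead writes $\omega^{-1/(q/s-1)}$ and uses H\"older's inequality with exponent $r_{\sigma_\omega}$ on $\sigma_\omega=\omega^{-1/(q-1)}$, or applies the reverse H\"older inequality to $\omega$ when $q/s<1$ is impossible. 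Concretely, using $\|\sigma_\omega\|_{L^{r_{\sigma_\omega}},Q}\le 2\langle\sigma_\omega\rangle_Q$ and the algebraic identity relating the exponent $q/s$ to $q$ and $r_{\sigma_\omega}$, the averaged integral $\langle\omega^{-1/(q/s-1)}\rangle_Q^{q/s-1}$ is bounded by a dimensional multiple of $\langle\sigma_\omega\rangle_Q^{q-1}$.

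Multiplying the two factors gives $[\omega]_{A_{q/s}}\le C_n[\omega]_{A_q}$, which is the claim with $C=C_{n}$ and $s'\le c_{n,p}[\omega]_{A_\infty}$. The one arithmetic check to carry out carefully — and the only place where $p$ (equivalently $r'$) enters the constant $c_{n,p}$ — is verifying that the chosen $s$ does satisfy $q/s>1$, i.e. $s<q=p/r'$; this forces a truncation $s:=\min\{\,\text{reverse-H\"older exponent},\,(1+p/r')/2\,\}$ or similar, and it is why the bound on $s'$ is allowed to depend on $n$ and $p$ rather than $n$ alone. The main (mild) obstacle is bookkeeping the exponents so that the power of $\omega^{-1}$ appearing in $A_{q/s}$ is exactly covered by one application of the sharp reverse H\"older inequality with exponent $r_{\sigma_\omega}$ (or $r_\omega$); once the identity $\frac{1}{q/s-1}=r_\bullet\cdot\frac{1}{q-1}$ (for the appropriate choice of $s$ in terms of $r_\bullet$) is set up, everything else is a two-line H\"older estimate. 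No genuinely hard analysis is involved — this is the standard quantitative openness of $A_q$, and the statement is exactly the form used later to run the $A_\infty$-extrapolation in the proof of Theorem \ref{Theorem1.4}.
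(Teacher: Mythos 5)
Your overall strategy --- quantifying the openness of $A_{p/r'}$ by applying the sharp reverse H\"older inequality of Lemma \ref{lm 2.1} to the dual weight $\sigma_\omega=\omega^{1-(p/r')'}$ and then doing exponent bookkeeping --- is exactly the paper's. Writing $q=p/r'$ and $\theta=1/(\tau_n[\omega]_{A_\infty})$, the paper applies the reverse H\"older inequality to $\sigma_\omega$ with exponent $1+\theta$ and \emph{defines} $s$ by
\begin{equation*}
\frac{p}{r's}-1=\frac{p/r'-1}{1+\theta},
\end{equation*}
which is precisely your closing identity $\tfrac{1}{q/s-1}=(1+\theta)\cdot\tfrac{1}{q-1}$. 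With this choice $p/(r's)>1$ holds automatically, no truncation is needed, and a one-line computation gives $s'=\tfrac{1+\theta}{\theta}\cdot\tfrac{q}{q-1}\leq c_{n,p}[\omega]_{A_\infty}$.

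The genuine problem is that the two definitions of $s$ you actually commit to are wrong, and either would sink the argument. Setting $s'=r_{\sigma_\omega}$, or $s'=1+1/(\tau_n[\omega]_{A_\infty})$, makes $s'$ barely larger than $1$, hence $s=s'/(s'-1)\sim\tau_n[\omega]_{A_\infty}$ is enormous; then $p/(r's)$ is typically below $1$, so $A_{p/(r's)}$ is not even a well-defined class. You have the roles of $s$ and $s'$ reversed: $s$ must stay close to $1$ (roughly $1+\theta\cdot\tfrac{q-1}{q}$), while $s'$ is the large quantity comparable to $[\omega]_{A_\infty}$. Your fallback of truncating $s$ at $(1+p/r')/2$ is both unnecessary for the correct choice and fatal for the exponent identity, since after truncation $\omega^{-1/(q/s-1)}$ is no longer the power of $\sigma_\omega$ reached by one application of the reverse H\"older inequality. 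The repair is simply to take your final identity as the definition of $s$, and then verify $s>1$, $p/(r's)>1$ and $s'\leq c_{n,p}[\omega]_{A_\infty}$ from it, which you never do for that choice. One further caution: applying Lemma \ref{lm 2.1} ``directly to $\omega$'' does not help, because the factor of the $A_{p/(r's)}$ characteristic that changes is the one involving $\omega^{1-(p/(r's))'}$, i.e.\ a power of $\sigma_\omega$; the reverse H\"older inequality must be applied to $\sigma_\omega$ (whose $A_\infty$ constant should then be controlled, a point the paper itself glosses over).
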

\begin{proof}
	Take $\theta=\frac{1}{\tau_n[\omega]_{A_\infty}}$, by Lemma $\ref{lm 2.1}$, we have
	\begin{align*}
		\Big(\frac{1}{|Q|}\int_Q\omega(x)^{(1-(p/r')')(1+\theta)}dx\Big)^{\frac{p/r'-1}{1+\theta}}\leq
		2^{p-1}\Big(\frac{1}{|Q|}\int_Q\omega(x)^{1-(p/r')'}dx\Big)^{p/r'-1}.
	\end{align*}
	From this, we obtain
	\begin{align*}
		&\Big(\frac{1}{|Q|}\int_Q\omega(x)dx\Big)\Big(\frac{1}{|Q|}\int_Q\omega(x)^{(1-(p/r')')(1+\theta)}dx
		\Big)^{\frac{p/r'-1}{1+\theta}}\\
		&\quad\leq2^{p-1}\Big(\frac{1}{|Q|}\int_Q\omega(x)dx\Big)
		2^{p-1}\Big(\frac{1}{|Q|}\int_Q\omega(x)^{1-(p/r')'}dx\Big)^{p/r'-1}.
	\end{align*}
	Choosing $p/(r's)-1=\frac{p/r'-1}{1+\theta}$, then $s'\leq c_{n,p}[\omega]_{A_\infty}$ and
	$[\omega]_{A_{p/(r's)}}\leq2^{p-1}[\omega]_{A_{p/r'}}$.
\end{proof}

\begin{proof}[Proof of Theorem $\ref{Theorem1.4}$]
	Instead of using the conjugation method, which relies on the sparse bounds $(\ref{(1.5)})$. We use the sparse bounds $(\ref{(1.6)})$. Let us denote
	$$\mathcal{A}_{\mathcal{S}_j,b}^{1}f(x)=\sum_{R\in \mathcal{S}_j}|b(x)-b_{R}|\langle|f|^{r'}\rangle_{R}^{1/r'}\chi_{R}(x),\quad \mathcal{A}_{\mathcal{S}_j,b}^{2}f(x)=\sum_{R\in \mathcal{S}_j}\langle|f(b-b_{Q})|^{r'}\rangle_{R}^{1/r'}\chi_R(x).$$
	First, we consider $\mathcal{A}_{\mathcal{S}_j,b}^{1}$, using $\|b-b_R\|_{\exp L(\omega),Q}\leq c_n[\omega]_{A_\infty}\|b\|_{\rm BMO}$ (see \cite{IR}) and the generalized H\"{o}lder inequality $(\ref{eq 2.3})$,
	\begin{equation*}
		\begin{split}
			\|\mathcal{A}_{\mathcal{S}_j,b}^{1}f\|_{L^{p}(\omega)}&=\sup_{\|g\|_{L^{p'}(\omega)}\leq1}
			\int_{\mathbb{R}^n}\mathcal{A}_{\mathcal{S}_j,b}^{1}f(x)g(x)\omega(x)dx\\
			&=\sup_{\|g\|_{L^{p'}(\omega)}\leq1}\sum_{R\in \mathcal{S}_j}\frac{1}{\omega(R)}\int_{R}|b(x)-b_R|g(x)\omega(x)dx\omega(R)
			\langle|f|^{r'}\rangle_{R}^{1/r'}\\
			&\leq\sup_{\|g\|_{L^{p'}(\omega)}\leq1}\sum_{R\in \mathcal{S}_j}\|b-b_R\|_{\exp L(\omega),R}\|g\|_{L(\log L)(\omega),R}\omega(R)
			\langle|f|^{r'}\rangle_{R}^{1/r'}\\
			&\leq c_n[\omega]_{A_\infty}\|b\|_{\rm BMO}\sup_{\|g\|_{L^{p'}(\omega)}\leq1}\sum_{R\in \mathcal{S}_j}\|g\|_{L(\log L)(\omega),R}\omega(R)\langle|f|^{r'}\rangle_{R}^{1/r'}.
		\end{split}
	\end{equation*}
	Let $\mathcal{B}$ be the family of the principal cubes in the usual sense, i.e., $\mathcal{B}=\bigcup_{k=0}^{\infty}\mathcal{B}_k$, where $\mathcal{B}_0:=\{\rm maximal~ cubes ~ in ~ \mathcal{S}_j\}$ and
	$$\mathcal{B}_{k+1}:=\bigcup_{B\in\mathcal{B}_k}ch_{\mathcal{B}}(B),\quad where\quad ch_{\mathcal{B}}(B)=\{R\subseteq\mathcal{B}~\rm maximal~s.t.~\tau(R)>2\tau(B)\},$$
	where $\tau(R)=\|g\|_{L(\log L)(\omega),R}\langle|f|^{r'}\rangle_{R}^{1/r'}$. Then
	\begin{equation*}
		\begin{split}
			\sum_{R\in S_j}\|g\|_{L(\log L)(\omega),R}\omega(R)\langle|f|^{r'}\rangle_{R}^{1/r'}
			&\leq\sum_{B\in \mathcal{B}}\|g\|_{L(\log L)(\omega),B}\langle|f|^{r'}\rangle_{B}^{1/r'}\sum_{R\in S_j,\pi(R)=B}\omega(R)\\
			&\leq c_n[\omega]_{A_\infty}\sum_{B\in \mathcal{B}}\|g\|_{L(\log L)(\omega),B}\langle|f|^{r'}\rangle_{B}^{1/r'}\omega(B)\\
			&\leq c_n[\omega]_{A_\infty}\int_{\mathbb{R}^n}M_{r'}f(x)M_{L\log L(\omega)}g(x)\omega(x)dx\\
			&\leq c_n[\omega]_{A_\infty}\int_{\mathbb{R}^n}M_{r'}f(x)M_{\omega}^{2}g(x)\omega(x)dx,
		\end{split}
	\end{equation*}
	where $\pi(R)$ is the minimal principal cube which contains $R$. Hence, using the sharp bounds of $M_{r'}$ (see \cite{HP}) and $\|M_\omega g\|_{L^{p'}(\omega)}\leq c_{n,p}\|g\|_{L^{p'}(\omega)}$ (see \cite{HP}), we obtain
	\begin{equation*}
		\begin{split}
			\|\mathcal{A}_{\mathcal{S}_j,b}^{1}f\|_{L^{p}(\omega)}&\leq c_n[\omega]_{A_\infty}^{2}\|b\|_{\rm BMO}\sup_{\|g\|_{L^{p'}(\omega)}\leq1}\int_{\mathbb{R}^n}M_{r'} f(x)M_{\omega}^{2}g(x)\omega(x)dx\\
			&\leq c_n[\omega]_{A_\infty}^{2}\|b\|_{\rm BMO}\|M_{r'} f\|_{L^p(\omega)}\sup_{\|g\|_{L^{p'}(\omega)}\leq1}\|M_{\omega}^{2}g\|_{L^{p'}(\omega)}\\
			&\leq c_{n,p} [\omega]_{A_\infty}^{2}([\omega]_{A_{p/r'}}[\omega^{-\frac{r'}{p-r'}}]_{A_\infty})
			^{1/p}\|b\|_{\rm BMO}\|f\|_{L^p(\omega)}.
		\end{split}
	\end{equation*}
	
	Now, turn to $\mathcal{A}_{\mathcal{S}_j,b}^{2}f(x)$. Applying the John-Nirenberg theorem, we deduce that
	\begin{align*}
		&\Big(\frac{1}{|R|}\int_R|b(x)-\langle b\rangle_R|^{r's'}\Big)^{1/(r's')}\\
		&\quad=\Big(r's'\int_{0}^{\infty}t^{r's'-1}|R|^{-1}|\{x\in R:|b(x)-\langle b\rangle_R|>t\}|dt\Big)^{1/(r's')}\\
		&\quad\leq e^2\Big(\int_{0}^{\infty}t^{r's'-1}e^{\frac{-t}{2^ne\|b\|_{\rm BMO}}}dt\Big)^{1/(r's')},
	\end{align*}
	where the last inequality is due to the fact that $(r's')^{1/(r's')}\leq e$. Since $$t^{r's'-1}e^{\frac{-t}{2^{n+1}e\|b\|_{\rm BMO}}}\leq(2^{n+1}e)^{r's'-1}(r's'-1)^{r's'-1}\|b\|_{\rm BMO}^{r's'-1},$$ we have
	\begin{align}\label{new inequality}\,\
		&\Big(\frac{1}{|R|}\int_R|b(x)-\langle b\rangle_R|^{r's'}\Big)^{1/(r's')}\nonumber\\
		&\quad\leq e^2\Big(\int_{0}^{\infty}(2^{n+1}e)^{r's'-1}(r's'-1)^{r's'-1}\|b\|_{\rm BMO}^{r's'-1}e^{\frac{-t}{2^{n+1}e\|b\|_{\rm BMO}}}dt\Big)^{1/(r's')}\\
		&\quad=2^{n+1}e^3r's'\|b\|_{\rm BMO}\nonumber.
	\end{align}
	As we do in dealing with $A_{\mathcal{S}_j,b}^{1}f$, by $(\ref{new inequality})$ and take $s'$ as Lemma $\ref{new}$, we get that
	\begin{align*}
		\|A_{\mathcal{S}_j,b}^{2}f\|_{L^{p}(\omega)}&=\sup_{\|g\|_{L^{p'}(\omega)}\leq1}
		\int_{\mathbb{R}^n}\mathcal{A}_{\mathcal{S}_j,b}^{2}f(x)g(x)\omega(x)dx\\
		&=\sup_{\|g\|_{L^{p'}(\omega)}\leq1}\sum_{R\in \mathcal{S}_j}\frac{1}{\omega(R)}\int_{R}g(x)\omega(x)dx \omega(R)
		\langle|f(b-\langle b\rangle_{R})|^{r'}\rangle_{R}^{\frac{1}{r'}}\\
		&\leq\sup_{\|g\|_{L^{p'}(\omega)}\leq1}\sum_{R\in \mathcal{S}_j}\frac{1}{\omega(R)}\int_{R}g(x)\omega(x)dx \omega(R)\langle|b-\langle b\rangle_R|^{r's'}\rangle_R^{1/(r's')}\|f\|_{L^{r's},R}\\
		&\leq c_{n,p,r'}\|b\|_{\rm BMO}[\omega]_{A_\infty}^2\sup_{\|g\|_{L^{p'}(\omega)}\leq1} \int_{\mathbb{R}^n}M_\omega g(x)M_{r's}f(x)\omega(x)dx.
	\end{align*}
	Now, using the following results given in \cite{Hy2,HP}:
	\begin{align*}
		\|Mf\|_{L^p(\omega)}\leq4ep'([\omega]_{A_p}[\omega^{-1/(p-1)}]_{A_\infty})^{1/p}\|f\|_{L^p(\omega)}
		,\quad \omega\in A_p,
	\end{align*}
	\begin{align*}
		[\omega^{-\frac{1}{p-1}}]_{A_\infty}\leq[\omega]_{A_p}^{\frac{1}{p-1}},\quad \omega\in A_p.
	\end{align*}
	Then by Lemma $\ref{new}$ and $(p/(r's))'\leq\frac{p+r'}{p-r'}$,
	\begin{align*}
		\|M_{r's}f\|_{L^p(\omega)}&=\|M(|f|^{r's})\|_{L^{p/(r's)}(\omega)}^{1/(r's)}\\
		&\leq
		[4e(p/(r's))'([\omega]_{A_{p/(r's)}}[\omega^{-\frac{1}{p/(r's)-1}}]_{A_\infty})^{r's/p}]^{1/(r's)}
		\||f|^{r's}\|_{L^{p/(r's)}}^{1/(r's)}\\
		&\lesssim[(p/(r's))']^{1/(r's)}([\omega]_{A_{p/r'}}[\omega]_{A_{p/r'}}^{(p/(r's))'-1})^{1/p}
		\|f\|_{L^p(\omega)}\\
		&\lesssim[\omega]_{A_{p/r'}}^{(p/(r's))'\frac{1}{p}}\|f\|_{L^p(\omega)}\leq[\omega]_{A_{p/r'}}
		^{\frac{p+r'}{p(p-r')}}\|f\|_{L^p(\omega)}.
	\end{align*}
	Therefore,
	\begin{equation*}
		\begin{split}
			\|\mathcal{A}_{\mathcal{S}_j,b}^{2}f\|_{L^{p}(\omega)}&\leq c_{n,p,r'}[\omega]_{A_\infty}^2\|b\|_{\rm BMO}\sup_{\|g\|_{L^{p'}(\omega)}\leq1}\int_{\mathbb{R}^n}M_{r's} f(x)M_{\omega}g(x)\omega(x)dx\\
			&\leq c_{n,p,r'}[\omega]_{A_\infty}^2\|b\|_{\rm BMO}\|M_{r's} f\|_{L^p(\omega)}\sup_{\|g\|_{L^{p'}(\omega)}\leq1}\|M_{\omega}g\|_{L^{p'}(\omega)}\\
			&\leq c_{n,p,r'} [\omega]_{A_\infty}^2[\omega]_{A_{p/r'}}^{\frac{p+r'}{p(p-r')}}
			\|b\|_{\rm BMO}\|f\|_{L^p(\omega)}.
		\end{split}
	\end{equation*}
	This, together with the estimate of $\|\mathcal{A}_{\mathcal{S}_j,b}^{1}f\|_{L^{p}(\omega)}$, deduces that
	\begin{align*}
		\|\mathcal{V}_{\rho}(\mathcal{T}_bf)\|_{L^p(\omega)}&\leq C(n,r',p,q_0,\|\mathcal{V}_\rho(\mathcal{T})\|_{L^{q_0}\rightarrow L^{q_0}}) [\omega]_{A_\infty}^{2}\\
		&\quad\times\Big([\omega]_{A_{p/r'}}^{\frac{p+r'}{p(p-r')}}+([\omega]_{A_{p/r'}}
		[\omega^{-\frac{r'}{p-r'}}]_{A_\infty})^{1/p}\Big)\|b\|_{\rm BMO}\|f\|_{L^p(\omega)}.
	\end{align*}
	Theorem $\ref{Theorem1.4}$ is proved.
\end{proof}

\begin{proof}[Proof of Theorem $\ref{Theorem1.5}$]
	The case $r'=1$ has been obtained in \cite{IR}, here, we extend the results to $1\leq r'<\infty$.
	We will apply Lemma $\ref{lm 2.6}$ to prove our theorem. Choose $\psi_j(t)=t^{r'}\log(e+t)^{r'j}$, $j=0,1$. It's not hard to see that $\psi_j$ satisfies the conditions of Lemma $\ref{lm 2.6}$. For $j=0$,
	\begin{equation*}
		\begin{split}
			&\int_{1}^{\infty}\frac{\varphi_{0}^{-1}\circ\Phi_{1}^{-1}(t)(\log(e+t))^{4r'}}
			{t^2(\log(e+t))^4}dt\\
			&\quad=\int_{\Phi_{1}^{-1}(1)}^{\infty}\frac{\varphi_{0}^{-1}(t)\Phi_{1}^{'}(t)(\log(e+\Phi_1(t)))
				^{4r'}}{\Phi_{1}^{2}(t)(\log(e+\Phi_1(t)))^{4}}dt\\
			&\quad\leq  \int_{\Phi_{1}^{-1}(1)}^{\infty}\frac{\varphi_{0}^{-1}(t)[\log(e+\Phi_{1}(t))]^{4r'-4}}{t^{2}
				\log(e+t)}dt.
		\end{split}
	\end{equation*}
	Take $\varphi_0(t)=t[\log(e+t)]^{4r'-4}[\log(e+\log(e+t))]^{1+\varepsilon}$. Then
	\begin{equation*}
		\begin{split}
			&\int_{\Phi_{1}^{-1}(1)}^{\infty}\frac{\varphi_{0}^{-1}(t)[\log(e+\Phi_{1}(t))]^{4r'-4}}{t^{2}
				\log(e+t)}dt\\
			&\lesssim\int_{\Phi_{1}^{-1}(1)}^{\infty}\frac{1}{t\log(e+t)[\log(e+\log(e+t))]^{1+\varepsilon}}dt
			\\
			&=\int_{\log(e+\Phi_{1}^{-1}(1))}^{\infty}\frac{e^\mu}{(e^\mu-e)\mu[\log(e+\mu)]
				^{1+\varepsilon}}d\mu\\
			&\lesssim\int_{\log(e+\Phi_{1}^{-1}(1))}^{\infty}\frac{d\mu}{\mu(\log\mu)^{1+\varepsilon}}
			\sim\frac{1}{\varepsilon}.
		\end{split}
	\end{equation*}
	
	For $j=1$, noticing that
	\begin{equation*}
		\begin{split}
			&\int_{1}^{\infty}\frac{\varphi_{1}^{-1}(t)[\log(e+(\log(e+t))^2)]^{r'}}{t^2[\log(e+t)]^{3-2r'}}
			dt\\
			&\quad\lesssim\int_{1}^{\infty}\frac{\varphi_{1}^{-1}(t)}{t^2[\log(e+t)]^{3-3r'}}dt,
		\end{split}
	\end{equation*}
	and taking $\varphi_1(t)=t(\log(e+t))^{3r'-2}[\log(e+\log(e+t))]^{1+\varepsilon}$, we have
	\begin{equation*}
		\begin{split}
			&\int_{1}^{\infty}\frac{\varphi_{1}^{-1}(t)[\log(e+(\log(e+t))^2)]^{r'}}{t^2[\log(e+t)]^{3-2r'}}dt\\
			&\quad\lesssim\int_{1}^{\infty}\frac{1}{t\log(e+t)[\log(e+\log(e+t))]^{1+\varepsilon}}dt
			\lesssim\frac{1}{\varepsilon}.
		\end{split}
	\end{equation*}
	Using that $\log(e+\varphi_0(t))\lesssim \log(e+t)$, we deduce that
	$$\Phi_1\circ\varphi_0(t)\lesssim\varphi_0(t)\log(e+t)=t(\log(e+t))^{4r'-3}
	[\log(e+\log(e+t))]^{1+\varepsilon}.$$
	Combining with $\Phi_0\circ\varphi_1(t)=t(\log(e+t))^{3r'-2}[\log(e+\log(e+t))]^{1+\varepsilon}$ and $$\sum_{k=1}^{\beta_n}\frac{4^{k(r'-1)}\varphi_{0}^{-1}\circ\Phi_{1}^{-1}(1/\alpha_k)}
	{\Phi_{1}^{-1}(1/\alpha_k)}\lesssim \frac{1}{\varepsilon},$$
	we conclude that
	\begin{align*}
		\omega(\{x\in\mathbb{R}^n:\mathcal{V}_{\rho}(\mathcal{T}_bf)(x)>\lambda\})
		&\leq C(n,r',q_0,\|\mathcal{V}_\rho(\mathcal{T})\|_{L^{q_0}\rightarrow L^{q_0}})\frac{1}{\varepsilon}\int_{\mathbb{R}^n}\psi
		\Big(\frac{\|b\|_{\rm BMO}|f(x)|}{\lambda}\Big)\\
		&\quad\times M_{L(\log L)^{4r'-3}(\log\log L)^{1+\varepsilon}}\omega(x)dx.
	\end{align*}
	
	Similarly, take $\varphi_0(t)=t(\log(e+t))^{4r'-4+\epsilon}$ and $\varphi_1(t)=t(\log(e+t))^{3r'-2+\epsilon}$, we get
	\begin{align}\label{new conclusion}
		&\omega(\{x\in\mathbb{R}^n:\mathcal{V}_{\rho}(\mathcal{T}_bf)(x)>\lambda\})\nonumber\\
		&\quad\leq C(n,r',q_0,\|\mathcal{V}_\rho(\mathcal{T})\|_{L^{q_0}\rightarrow L^{q_0}})\frac{1}{\varepsilon}\int_{\mathbb{R}^n}\psi
		\Big(\frac{\|b\|_{\rm BMO}|f(x)|}{\lambda}\Big)\\
		&\qquad\times M_{L(\log L)^{4r'-3+\epsilon}}\omega(x)dx.\nonumber
	\end{align}
	For any $t\geq1,\alpha>0$, since $\log t\leq\frac{t^\alpha}{\alpha}$, we have
	$$\frac{1}{\varepsilon}M_{L(\log L)^{4r'-3+\epsilon}}(x)\leq c\frac{1}{\varepsilon}\frac{1}{\alpha^{(4r'-3)+\varepsilon}}
	M_{L^{1+(4r'-3+\varepsilon)\alpha}}\omega(x).$$
	Hence, let $\alpha(4r'-3+\varepsilon)=\frac{1}{\tau_n[\omega]_{A_\infty}}$, by Lemma $\ref{lm 2.1}$,
	\begin{align*}
		&\frac{1}{\varepsilon}M_{L(\log L)^{4r'-3+\epsilon}}\omega(x)\\
		&\quad\leq c\frac{1}{\varepsilon}\frac{1}{\alpha^{(4r'-3)+\varepsilon}}
		M_{L^{1+\alpha(4r'-3+\varepsilon)}}\omega(x)\\
		&\quad= c\frac{1}{\varepsilon}\Big(\tau_n[\omega]_{A_\infty}
		(4r'-3+\varepsilon)\Big)^{4r'-3+\varepsilon}M_{{1+\frac{1}{\tau_n[\omega]_
					{A_\infty}}}}\omega(x)\\
		&\quad\leq c_{n,r}\frac{1}{\varepsilon}[\omega]_{A_\infty}^{4r'-3+\varepsilon}
		M_{{1+\frac{1}{\tau_n[\omega]_{A_\infty}}}}\omega(x).
	\end{align*}
	Finally, choose~$\varepsilon=\frac{1}{\log(e+[\omega]_{A_\infty})}$, then $[\omega]_{A_\infty}^\epsilon\leq e$ and it implies that
	\begin{align*}
		\frac{1}{\varepsilon}M_{L(\log L)^{4r'-3+\epsilon}}\omega(x)
		\leq c_{n,r}[\omega]_{A_\infty}^{4r'-3}[\log(e+[\omega]_{A_\infty})]M\omega(x).
	\end{align*}
	This together with $(\ref{new conclusion})$, we complete the proof of Theorem $\ref{Theorem1.5}$.
\end{proof}

\section{Including result}

In this last section, we will apply the sparse domination obtained in Theorem $\ref{Theorem1.1}$ to present the local exponential decay estimates of variation operators. Let us recall some backgrounds before stating our contributions. It was known that Coifman and Fefferman applied good-$\lambda$ technique to obtain
$$\|T^\ast f\|_{L^p(\omega)}\leq c\|Mf\|_{L^p(\omega)},$$
where $T^\ast$ is the maximal Calder\'{o}n-Zygmund operator and $\omega\in A_{\infty}$. The method was relied heavily on the estimate
$$|\{x\in \mathbb{R}^n: T^\ast f(x)>2\lambda,Mf(x)\leq\lambda\gamma\}|\leq c\gamma|\{x\in \mathbb{R}^n: T^\ast f(x)>\lambda\}|.$$
To show the above estimate, it suffices to study the following local etimate
$$|\{x\in Q: T^\ast f(x)>2\lambda,Mf(x)\leq\lambda\gamma\}|\leq c\gamma|Q|,$$
where $Q$ is a Whitney cube and $\text {supp}~ f\subset Q$. In 1993, Buckley \cite{Bu} obtained an exponential decay in $\gamma$ in studying the quantitative weighted estimates for Calder\'{o}n-Zygmund operators,
\begin{equation}\label{(1.7)}
	|\{x\in Q: T^\ast f(x)>2\lambda,Mf(x)\leq\lambda\gamma\}|\leq ce^{-\frac{c}{\gamma}}|Q|.
\end{equation}
Based on the result above, it yields that
$$\|Tf\|_{L^p(\omega)}\leq cp[\omega]_{A_\infty}\|Mf\|_{L^p(\omega)},$$
which is also a key to the $L\log L$ estimate obtained in \cite{LOP}. Later on, Karagulyan \cite{Ka} improved $(\ref{(1.7)})$ by giving the below estimate:
$$|\{x\in Q:T^\ast f(x)>tMf(x)\}|\leq ce^{-\alpha t}|Q|.$$
The above estimate was then extended to other operators by Ortiz-Caraballo, P\'{e}rez and Rela in \cite{OCPR}. We also refer readers to \cite{CJa} for its application to the quantitative $C_p$ estimate for Calder\'{o}n-Zygmund operators.

Now, by Theorem $\ref{Theorem1.1}$ and employing the arguments in \cite{CLPR,IR}, we may obtain:

\begin{theorem}\label{Theorem1.6}\,\
	Let $1<r\leq\infty$, $\rho>2$, $b\in {\rm BMO}(\mathbb{R}^n)$. Assume that $K(x,y)\in \mathcal{H}_r$ and $(\ref{(1.4)})$. Let $\mathcal{T}$ and $\mathcal{T}_{b}$ be given by $(\ref{(1.2)})$ and $(\ref{(1.3)})$, respectively. If $\mathcal{V}_\rho(\mathcal{T})$ is bounded on $L^{q_0}(\mathbb{R}^n,dx)$ for some $1<q_0<\infty$, then for $\text {supp}~f\subset Q$, there exist constants $c_1$, $c_2$, $c_3$ and $c_4$ such that
	\begin{align*}
		&|\{x\in Q:\mathcal{V}_\rho(\mathcal{T}f)(x)>tM_{r'}(f)(x)\}|\leq c_1e^{-c_2t}|Q|,\\
		&|\{x\in Q:\mathcal{V}_\rho(\mathcal{T}_bf)(x)>tM_{L^{r'}(\log L)^{r'}}(f)(x)\}|\leq c_3e^{-\sqrt{c_4t/\|b\|_{\rm BMO}}}|Q|.
	\end{align*}
\end{theorem}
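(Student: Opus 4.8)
The plan is to deduce both estimates from the pointwise sparse bounds of Theorem~\ref{Theorem1.1} together with the local exponential‑decay machinery for sparse operators developed in \cite{OCPR,CLPR,IR,Ka}.

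\emph{Step 1: reduction to a local sparse operator.} Fix $f$ with $\supp f\subset Q$. By \eqref{(1.5)} we have, for a.e.\ $x\in Q$, $\mathcal{V}_\rho(\mathcal{T}f)(x)\lesssim\sum_{j=1}^{3^n}\sum_{R\in\mathcal{S}_j}\langle|f|^{r'}\rangle_R^{1/r'}\chi_R(x)$, and by \eqref{(1.6)} an analogous bound in terms of the sparse forms $\mathcal{A}^{1}_{\mathcal{S}_j,b}$ and $\mathcal{A}^{2}_{\mathcal{S}_j,b}$ of Section~\ref{section 4}. For $x\in Q$, the cubes $R\in\mathcal{S}_j$ with $R\ni x$ and $l_R>2l_Q$ satisfy $Q\subset cR$, and (being in a single lattice) they are nested; hence $\sum_R\langle|f|^{r'}\rangle_R^{1/r'}\chi_R(x)\lesssim\sum_R(|Q|/|R|)^{1/r'}\langle|f|^{r'}\rangle_Q^{1/r'}\lesssim\langle|f|^{r'}\rangle_Q^{1/r'}\le M_{r'}f(x)$, and the analogous tail in the commutator case is $\lesssim M_{L^{r'}(\log L)^{r'}}f(x)$. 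The remaining cubes lie in a fixed cube $Q^{*}\supset 5Q$ with $|Q^{*}|\thicksim|Q|$. Since $M_{r'}f(x)\ge\langle|f|^{r'}\rangle_{Q^{*}}^{1/r'}$ and $M_{L^{r'}(\log L)^{r'}}f(x)\ge\|f\|_{L^{r'}(\log L)^{r'},Q^{*}}$ for every $x\in Q$, after normalizing these quantities to $1$ it suffices to bound, for the sparse family $\mathcal{S}:=\{R\in\mathcal{S}_j:R\subseteq Q^{*}\}$, the level sets of the corresponding sparse (and commutator‑sparse) operators above a constant multiple of $t$.

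\emph{Step 2: the decay engine.} The key ingredient, which I would either quote from or reprove following \cite{OCPR,CLPR,Ka}, is the local $\exp L$ estimate for a sparse family $\mathcal{S}$ of cubes inside $Q^{*}$: if $\langle g^{r'}\rangle_{Q^{*}}\le1$ then
\[
\Bigl|\Bigl\{x\in Q^{*}:\textstyle\sum_{R\in\mathcal{S}}\langle g^{r'}\rangle_R^{1/r'}\chi_R(x)>t\Bigr\}\Bigr|\le c\,e^{-ct}\,|Q^{*}|.
\]
The proof is a stopping‑time decomposition: fixing $\Lambda>1$, form generations $\mathcal{G}_0=\{Q^{*}\}$ and $\mathcal{G}_{k+1}=\bigcup_{P\in\mathcal{G}_k}\{\text{maximal }R\in\mathcal{S}:\,R\subsetneq P,\ \langle g^{r'}\rangle_R>\Lambda^{r'}\langle g^{r'}\rangle_P\}$; the weak $(1,1)$ bound for the dyadic maximal operator applied to $g^{r'}$ gives $|\bigcup\mathcal{G}_{k+1}|\le\Lambda^{-r'}|\bigcup\mathcal{G}_k|$, so $|\bigcup\mathcal{G}_k|\le\Lambda^{-r'k}|Q^{*}|$, while between consecutive stopping cubes the $r'$‑averages stay within the factor $\Lambda$ and the Carleson packing of $\mathcal{S}$ controls the contribution to the sum, so that the $k$‑th generation contributes $\lesssim\Lambda$; summing the geometric series in $k\thicksim t$ gives the estimate. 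Combined with Step~1 this yields the first inequality of the theorem (with $c_1=c$, $c_2=c$). An Orlicz version of the same argument, as in Lemma~\ref{lm 2.6}, handles sparse operators built from $L^{r'}(\log L)^{r'}$‑averages.

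\emph{Step 3: the commutator.} Write the commutator sparse form as $\mathcal{A}^{1}_{\mathcal{S},b}f+\mathcal{A}^{2}_{\mathcal{S},b}f$. For $\mathcal{A}^{2}_{\mathcal{S},b}f=\sum_R\langle|f(b-\langle b\rangle_R)|^{r'}\rangle_R^{1/r'}\chi_R$, the generalized H\"older inequality \eqref{eq 2.3} together with the John--Nirenberg inequality gives $\langle|f(b-\langle b\rangle_R)|^{r'}\rangle_R^{1/r'}\lesssim\|b\|_{\rm BMO}\,\|f\|_{L^{r'}(\log L)^{r'},R}$, so this piece is $\lesssim\|b\|_{\rm BMO}$ times a sparse operator covered by the Orlicz version of Step~2, producing decay $e^{-ct/\|b\|_{\rm BMO}}$, which is stronger than required. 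For $\mathcal{A}^{1}_{\mathcal{S},b}f=\sum_R|b(x)-\langle b\rangle_R|\langle|f|^{r'}\rangle_R^{1/r'}\chi_R$, split $|b(x)-\langle b\rangle_R|\le|b(x)-\langle b\rangle_{Q^{*}}|+C_n\|b\|_{\rm BMO}\bigl(1+\log\tfrac{|Q^{*}|}{|R|}\bigr)$; the logarithmic term is handled by a refinement of Step~2 tracking the weights $\log\tfrac{|Q^{*}|}{|R|}$ (this is what forces the Orlicz maximal function $M_{L^{r'}(\log L)^{r'}}f$ on the right), and still gives at least square‑root‑exponential decay. The main term is the product $|b(x)-\langle b\rangle_{Q^{*}}|\cdot\sum_R\langle|f|^{r'}\rangle_R^{1/r'}\chi_R(x)$: by John--Nirenberg, $|b(x)-\langle b\rangle_{Q^{*}}|\le s\|b\|_{\rm BMO}$ off a set of measure $\le ce^{-cs}|Q^{*}|$, and by Step~2 the sparse sum exceeds $s$ only on a set of measure $\le ce^{-cs}|Q^{*}|$; since $\{uv>t\}\subseteq\{u>\sqrt t\}\cup\{v>\sqrt t\}$, choosing $s\thicksim\sqrt{t/\|b\|_{\rm BMO}}$ bounds the exceptional set by $c\,e^{-\sqrt{c_4t/\|b\|_{\rm BMO}}}|Q^{*}|$. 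Undoing the normalization and the reduction of Step~1 finishes the proof.

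\emph{Main obstacle.} I expect the technical heart to be the stopping‑time lemma of Step~2 with explicit, usable constants, and — for the commutator — the bookkeeping in Step~3 that simultaneously (i) converts the weighted $L^{r'}(\log L)^{r'}$ averages and the logarithmic weights into exactly the Orlicz maximal function $M_{L^{r'}(\log L)^{r'}}f$ on the right‑hand side, and (ii) extracts precisely the square‑root rate $e^{-\sqrt{t}}$ from the two independent exceptional sets coming from $b$ and from the sparse operator.
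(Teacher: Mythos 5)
Your proposal is correct and follows exactly the route the paper itself takes: the paper offers no proof of this theorem beyond invoking the sparse dominations of Theorem~\ref{Theorem1.1} together with ``the arguments in \cite{CLPR,IR}'' (i.e.\ the Ortiz-Caraballo--P\'erez--Rela/Karagulyan local exponential-decay scheme), which is precisely what you carry out, including the reduction to a local sparse family, the stopping-time lemma, and the $\{uv>t\}\subseteq\{u>\sqrt t\}\cup\{v>\sqrt t\}$ trick that produces the square-root rate for the commutator. One caveat on your Step~2 sketch: the assertion that ``the $k$-th generation contributes $\lesssim\Lambda$'' is not a valid pointwise bound (arbitrarily many cubes of $\mathcal{S}$ lying between two consecutive generations may contain a given point), and the correct argument supplements the stopping time with the exponential distributional estimate for the sparse counting function $\sum_{R\in\mathcal{S}}\chi_R$; since that lemma is proved in the references you cite, you should quote it rather than rely on your re-derivation.
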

This provides an extension of the corresponding results for singular integrals and commutators in \cite{IR,OCPR} to the variation operators. As far as we are concerned, these results are completely new, since no local exponential decay estimates of variation operators have been considered before.

\subsection*{Acknowledgements} The authors would like to thank Professor Kangwei Li for
a helpful discussion on the subject of this article.

\end{document}